\newcommand{\nobracket}{}
\newcommand{\tmem}[1]{{\em #1\/}}
\newcommand{\tmmathbf}[1]{\ensuremath{\boldsymbol{#1}}}
\newcommand{\tmop}[1]{\ensuremath{\operatorname{#1}}}
\newenvironment{enumeratealpha}{\begin{enumerate}[a{\textup{)}}] }{\end{enumerate}}
\newenvironment{enumeratenumeric}{\begin{enumerate}[1.] }{\end{enumerate}}
\newtheorem{Theorem}{Theorem}[section]
\newtheorem{Proposition}[Theorem]{Proposition}
\newtheorem{Lemma}[Theorem]{Lemma}
\newtheorem{Corollary}[Theorem]{Corollary}
\theoremstyle{definition}
\newtheorem{Definition}[Theorem]{Definition}
\newtheorem{Example}[Theorem]{Example}
\newtheorem*{Hypothesis}{Hypothesis $(H_k)$}
\theoremstyle{remark}
\newtheorem{Remark}[Theorem]{Remark}
\newcommand{\cR}{\ensuremath{\mathbbm{R}}}
\newcommand{\Fol}{\mathfrak{F}}
\newcommand{\eps}{\varepsilon }
\newcommand{\Mol}{\mathcal{M}}
\newcommand{\vf}{f}
\newcommand{\comp}{\operatorname{comp}}
\newcommand{\rg}{\mathrm{reg}}
\newcommand{\R}{\ensuremath{\cR}}
\newcommand{\C}{\ensuremath{\mathbb{C}}}
\newcommand{\cS}{\mathbbm{S}}
\newcommand{\e}{\eps }
\newcommand{\vF}{\mathcal{F}}
\newcommand{\reg}{\mathrm{reg}}
\newcommand{\mon}{\mathbf{m}}
\newcommand{\us}{\underline{s}}
\title{Piecewise Smooth Dynamical Systems Regularized by Convolution}
\author{%
  Claudio A.~Buzzi\thanks{\scriptsize Departamento de Matem\'atica -- IBILCE--UNESP, 
  Rua C. Colombo, 2265, CEP 15054--000 S. J. Rio Preto, S\~ao Paulo, Brazil. 
  E-mail: \texttt{claudio.buzzi@unesp.br}}%
  \and
  Daniel Panazzolo\thanks{\scriptsize D\' epartement de Math\' ematiques -- IRIMAS--UHA,
  18 Rue des Fr\`eres Lumi\`ere, 68093 Mulhouse, France. 
  E-mail: \texttt{daniel.panazzolo@uha.fr}}%
  \and
  Paulo R.~da Silva\thanks{\scriptsize Departamento de Matem\'atica -- IBILCE--UNESP, 
  Rua C. Colombo, 2265, CEP 15054--000 S. J. Rio Preto, S\~ao Paulo, Brazil. 
  E-mail: \texttt{paulo.r.silva@unesp.br}}%
}
\date{\scriptsize \textbf{MSC 2020:} 34A36, 34D15, 34C45, 34C07, 34C23, 34C25}
\begin{document}

\maketitle
\begin{abstract}
We present a general regularization procedure for piecewise smooth vector fields whose discontinuity locus is  a variety of normal crossings type. We show that such regularization can be smoothed through a finite sequence of blowings-up, thereby reducing the problem to study of the dynamics of a smooth vector field in a manifold with corners.  The procedure will be illustrated in the cases of piecewise smooth vector fields on $\cR^2$ with discontinuity locus $x=0$ or $xy=0$, and on $\cR^3$ with discontinuity locus $xyz=0$.  We will see that some unexpected dynamical phenomena may arise even in the case of piecewise constant vector fields.
\end{abstract}


\section{Introduction}\label{s0}
Consider a pair $(M,\Sigma)$ formed by smooth manifold $M$ (the phase space) and closed subset $\Sigma \subset M$ (the discontinuity locus). A {\em piecewise smooth vector field} on $(M,\Sigma)$ is given by smooth vector field $X$ defined on $M\setminus \Sigma$ and satisfying the following {\em extension property}:  the restriction of $X$ to each connected component of $M\setminus \Sigma$ extends smoothly to the whole manifold. 

A basic example is given by $M = \R\times\R^{n-1}$ and $\Sigma=\{0\}\times \cR^{n-1}$. In Cartesian coordinates $(x,y) = (x,y_1,..,y_{n-1})$, a piecewise smooth vector field on $(M,\Sigma)$ takes the form 
\begin{equation}
X=f \frac{\partial}{\partial x} + \sum_{k=1}^{n-1} g_i \frac{\partial}{\partial y_i}
\label{sisdesc}
\end{equation} 
where the components $f,g_i$ can be written as 
$$f = \mathds{1}_{\{x>0\}}f_+ + \mathds{1}_{\{x<0\}}f_-\quad\text{and}\quad
g_i = \mathds{1}_{\{x>0\}} g_{i,+} + \mathds{1}_{\{x<0\}} g_{i,-} ,
$$
Here, for each sign $\pm$,  $f_{\pm}, g_{i,\pm}$ are smooth functions on $M$, and $\mathds{1}_S$ denotes the characteristic function of a set $S$. Alternatively, we can write 
$$
X = \mathds{1}_{\{x>0\}} X_+ + \mathds{1}_{\{x<0\}} X_-
$$
where $X_\pm = f_\pm \frac{\partial}{\partial x} + \sum g_{i,\pm}\frac{\partial}{\partial x}$ are globally smooth vector fields.

A fundamental question is how to associate a {\em dynamical system} to a piecewise smooth vector field. More precisely, one is interested in extending the local flow of $X$, which is well-defined in $M\setminus \Sigma$, to a flow (or a semi-flow) in the vicinity of the discontinuity locus. 

There is a vast literature dealing with the subject and some of the foundational 
ideas were introduced by Filippov in \cite{AF}. In that work, he described what are now known as the {\em Filippov conventions}, providing a natural extension of the flow to the discontinuity locus under suitable genericity assumptions.  

Later, in \cite{ST}, the authors adopt a different approach by studying families of smooth vector fields obtained from $X$ through a procedure known as {\em Sotomayor-Teixeira regularization}, or simply  {\em ST-regularization}.
For instance, keeping the notation of the above example, a ST-regularization of $X$ is
a one-parameter family $X_{\eps}$, with $\eps \in (\cR_{>0},0)$, given by
\begin{equation}
X_\eps = f_\eps\frac{\partial}{\partial x} + \sum_{k=1}^{n-1} g_{i,\eps} \frac{\partial}{\partial y_i}
\label {regst}
\end{equation}
where, for $h = f$ or $h = g_i$, we define 
$$h_\eps(x,y)=\frac{1}{2}\left(1+\varphi\left(\frac{x}{\eps}\right) \right) h_+ + \frac{1}{2}\left(1-\varphi\left(\frac{x}{\eps}\right) \right)h_-$$
Here, $\varphi$ is a so-called \emph{smoothed sign function}, that is, a smooth
and increasing function on $\mathbb{R}$ such that there exists $T>0$ with
$$
\varphi(t) = -1 \quad \text{for } t \le -T,
\qquad
\varphi(t) = 1 \quad \text{for } t \ge T.
$$
Notice that $X_{\eps}$ is a smooth vector field satisfying $X_{\eps} =  X^+$ on $\{x\geq\eps\}$ and $X_{\eps}=  X^-$ on $\{x\leq-\eps\}$. 

As later proved in \cite{LST5}, under genericity assumptions, the flow idealized by Filippov can be interpreted as suitable limit of the flow of the ST-regularization, therefore establishing a connection between the two approaches.

In \cite{BPT}, Buzzi, Silva and Teixeira - strongly inspired by an idea of Dumortier -   
proved that, under a suitable rescaling of coordinates, the ST-regularization $X_\eps$ gives rise to a singular perturbation problem as $\eps \rightarrow 0$.
This result provided a very fruitful connection between geometric singular perturbation theory (GSP) and piecewise smooth dynamics. 

In a series of papers, \cite{LST, LST3, LST4, LST5}, Llibre, Silva and Teixeira studied several regularization problems in the $\R^n$, including the double 
regularization in the case where the discontinuity set is given by the union of two transversal hyperplanes and 
the regularization in more degenerate surfaces.  

In \cite {BRT, BLT}  the authors used classical singular perturbation techniques to study the regularization of the fold-fold singularity.  In \cite{Kr2}, the same problem is investigated using the blow-up techniques developed by Dumortier and Roussarie in \cite {DR}.

Based on the above example, the definition of the Sotomayor-Teixeira regularization can be extended to the setting where $M$ is an arbitrary manifold and the discontinuity locus $\Sigma$ is a {\em smooth} codimension one-submanifold (see for instance \cite{PS}, section 3.1). In \cite{PS}, the second and third authors developed a general geometric framework for studying piecewise smooth vector fields in cases where the discontinuity locus is not assumed to be smooth.

More precisely, assuming that $\Sigma$ is a (coherent) real analytic subvariety of $M$, the authors employed the classical results from resolution of singularities to prove the following statement: There exists a finite sequence of blowing ups,
$$
(M,\Sigma) = (M_0,\Sigma_0) \leftarrow (M_1,\Sigma_1) \leftarrow \cdots \leftarrow (M_r,\Sigma_r) = (\widetilde{M},\widetilde{\Sigma})
$$
such that $\widetilde{\Sigma}$, the total transform of $\Sigma$ under the blowing-up sequence, is a codimension one normal crossings variety of $\widetilde{M}$ (i.e.~a locally defined by a union of coordinate hyperplanes). Moreover, each piecewise smooth vector field $X$ on $(M,\Sigma)$ pulls back to a {\em piecewise smooth oriented 1-dimensional foliation} $\cal F$ on $(\widetilde{M},\widetilde{\Sigma})$ (see subsection \ref{subsect-piecewisesmoothfoliations} for a precise definition). 

Additionally, building on the ideas of \cite{BPT},  it is shown in Theorem 2.1 of \cite{PS} that, under the assumption that $\Sigma$ is a smooth submanifold, the ST-regularization is {\em blow-up smoothable}. 

Let us briefly explain this result. Firstly, we observe that the regularized family $X_\eps$ can be seen as a vector field in the product space $N = M \times (\cR_{\ge 0},0)$, with discontinuity locus $\Sigma \times \{ 0\}$. Then, performing a single blowing-up on $N$,
$$
N = N_0
\xleftarrow{\;\varphi\;}
N_1 = \widetilde{N}
$$
with center on $\Sigma \times \{ 0\}$, it is proved that the pull-back of  $X_\eps$ extends to a {\em globally smooth} oriented 1-foliation on the blowed-up space $\widetilde{N}$  (i.e.~a foliation locally generated by smooth vector fields).

As mentioned above, the ST-regularization was originally defined under the assumption that the discontinuity locus $\Sigma$ is smooth.  One can generalize this definition to the case where $\Sigma$ is a normal crossings subvariety through the use of a {\em multi-regularization} (see e.g.~\cite{PS}, section 3.1 for an example of a double regularization of the cross). Intuitively, the multi-regularization regularizes separately each irreducible component of $\Sigma$, at the cost of introducing several new parameters.
 
The main goal of the present paper is to systematically study the regularization of piecewise smooth vector fields through a different regularization method based on the {\em convolution} integral. 

The regularization by convolution is a very classical tool from analysis, which can be defined in a much broader context.  We now briefly describe the construction and refer to subsection \ref{subsect-piecewisesmoothfcs} for more details:  Let $X$ be a locally integrable vector field in $\cR^n$. In other words, we assume that $X$ has the form 
$$
X = \sum_{i=1}^n f_i \frac{\partial}{\partial x_i}
$$
where each component $f_i$ belongs to $L^1_{\mathrm{loc}}(\cR^n)$. 
Given a smooth function $m\ge 0$ with compact support such that $\displaystyle \int m = 1$ (called a {\em mollifier}) we define 
\begin{equation}
\label{regularization-expression}
X_\eps = m_\eps \ast X : =  \sum_{i=1}^n (m_\eps\ast f_i)  \frac{\partial}{\partial x_i} 
\end{equation}
where, for each $\eps > 0$,
$$
m_\eps\ast f(x) = \int_{\cR^n} f(x-y)\, m_\eps(y) dy
$$
is the convolution of $f$ with the rescaled mollifier $m_\eps(x) = \frac{1}{\eps^n}m(\frac{x}{\eps})$. 

It is easy to prove (see e.g.~\cite{HormanderI}, section 1.3) that $X_\eps$ is a smooth vector field for each $\eps > 0$, and that $X_\eps$ converges uniformly to $X$ on each 
compact set $K$ where $X$ is continuous. 

Note that $X_\eps$ defines a vector field on the product space $\cR^n \times \cR_{>0}$ tangent to the fibers $\{\eps = \mathrm{cte}\}$. We {\em complete} this family to the fiber $\eps = 0$ by defining $X_0 = X$. The resulting vector field in $\cR^n \times \cR_{\ge 0}$, noted $X^\reg$, is called {\em regularization by convolution of $X$ (with mollifier $m$)}.  We remark that such completion results into a vector field which still has a discontinuous behavior when restricted to the fiber $\{\eps = 0\}$.  

The above definition makes no assumption on the geometry of the set $\Sigma$ where $X$ fails to be smooth (referred to in this context as the singular support of $X$). At this level of generality, it becomes difficult to describe the qualitative behavior of $X_\eps$ in the limit as $\eps \to 0$. For instance, the associated Cauchy initial value problem may admit multiple distinct limit solutions (see, e.g., \cite{HormanderNHDE}, Section 1.4).

Essentially, the goal of this paper is to prove that such description is possible in the case where the following conditions hold:
\begin{itemize}
\item $M$ is an open subset of $\mathbb{R}^n$
\item $\Sigma$ is a normal crossings subvariety of $M$, and
\item $X$ is a piecewise smooth vector field on $(M,\Sigma)$.
\end{itemize}
More precisely, our main result, Theorem \ref{theorem-smoothing-regvf}, implies that that there exists a {\em finite sequence of blowing-ups} in the product space $N = M\times (\cR_{\ge 0},0)$,
\[
N = N_0
\xleftarrow{\;\varphi_1\;}
\cdots
\xleftarrow{\;\varphi_r\;}
N_r = \widetilde{N}
\]
such that the pull-back of $X^\reg$ under the composition $\Phi = \varphi_r\circ\cdots\circ \varphi_1$ extends to a smooth oriented 1-foliation on $\widetilde{N}$, which is a manifold with corners. For this reason, we refer to such sequence of blowing-ups as a {\em smoothing procedure} for the regularization $X^\reg$ of $X$. 

Note that the above-mentioned Theorem gives more detailed information, showing that several additional structures are preserved in such smoothing procedure.

We further observe that, in the particular case where $\Sigma$ is a smooth submanifold, there is an explicit relation between the ST-regularization and the regularization by convolution in the vicinity of $\Sigma$ (see subsection \ref{subsection-st-reg-link} for the details). Consequently, our result can be viewed as a generalization of the smoothing result proved in \cite{PS}. 

To summarize, the present paper is a natural continuation of our previous work~\cite{PS}. 
In both papers we establish desingularization results related to piecewise smooth vector fields, though in different contexts. 
In~\cite{PS}, we desingularized piecewise-smooth vector fields with a singular 
discontinuity set and obtained a situation in which the discontinuity locus has a 
normal crossing structure. In the present work, we consider piecewise-smooth systems 
whose discontinuity set already has normal crossings and regularize them by convolution. 
We prove that the resulting regularization \( X^\reg \) becomes a smooth vector 
field on a manifold (with corners) after a finite sequence of blow-ups.

\subsection{Overview of the paper}\label{s2}
Section~\ref{section-examples-Paulo} is devoted to a series of examples illustrating the application of convolution regularization to classical discontinuous vector fields in dimension n = 2.

In Section \ref{s4},  we develop the theoretical foundation of the convolution-based regularization procedure 
for discontinuous functions and vector fields defined on manifolds with corners. We begin by introducing the 
notions of \emph{piecewise-smooth spaces} and \emph{piecewise-smooth functions}, that is, smooth functions 
defined on each component of \( M \setminus \Sigma \) which admit smooth local extensions across 
the components of the discontinuity locus \(\Sigma\). We formalize the space \( C^{\infty}(M,\Sigma) \) of 
such functions and define the \emph{convolution regularization} of \( f \in C^{\infty}(M,\Sigma) \) 
by means of a mollifier \( m \), obtaining the linear operator
\[
\mathrm{reg}_m(f) = m_\varepsilon * f,
\]
which generates a one-parameter family of smooth functions depending on \(\eps > 0\) 
which we complete to \(\varepsilon = 0\) by defining $f_0 = f$. We show that this process can be interpreted as 
a smooth extension in the augmented space \( N = M \times \mathbb{R}_{\ge 0} \), a manifold with boundary whose boundary component \( M = \{\varepsilon = 0\} \) represents the initial discontinuous domain.

In Section \ref{s5}, we prove that every function regularized by convolution, \( f^{\mathrm{reg}} = \mathrm{reg}_m(f) \), 
can be made globally smooth after a finite sequence of blowings-up of the ambient space \((N, M, \Sigma)\). Intuitively, the blowings-up gradually remove the components of smaller dimension in the stratification of discontinuity locus of \( f^{\mathrm{reg}} \).

In Section \ref{s6}, we extend the smoothing theorem proved in Section \ref{s5} to 
the case of discontinuous vector fields.  It is proved that every vector field regularized by convolution can be
made globally smooth after a finite sequence of directional and family blow-ups of the ambient space. See Theorem \ref{theorem-smoothing-regvf}. This result is the vector-field counterpart of \textit{Theorem} 
\ref{theorem-smoothing-rpss} for scalar functions. In the particular case where the discontinuity locus \(\Sigma\) is a smooth hypersurface, the convolution-based regularization essentially coincides with the classical \textit{Sotomayor--Teixeira regularization} on the exceptional divisor.  
This establishes a precise link between the convolution approach and the traditional smooth regularization methods in non-smooth dynamics.

In Section \ref{s7} we present three examples showing some applications of the 
smoothing theorem for regularized piecewise-smooth vector fields.  The regularization by convolution will be illustrated in the cases of piecewise smooth vector fields on $\cR^2$ with discontinuity locus $xy=0$, and on $\cR^3$ with discontinuity locus $xyz=0$. 
The purpose is to show how such regularization allows to reveal new dynamical phenomena related to discontinuous dynamics.

\section{Some examples in dimension $n=2$}\label{section-examples-Paulo} 
In this section, we present the expressions for the regularizations by convolution of some well-known normal forms of 
discontinuous vector fields in the plane with smooth discontinuity locus. We refer the reader to \cite{BPT} for the detailed derivations of these normal forms. 

We firstly recall that a piecewise smooth vector field on the plane with discontinuity locus $\Sigma =\{x=0\}$ can be written as
$$
X = \mathds{1}_{\{x>0\}} X_+ + \mathds{1}_{\{x<0\}} X_-
$$
where $X_{\pm} = f_\pm \frac{\partial}{\partial x} + g_\pm \frac{\partial}{\partial y}$ are smooth vector fields in the plane. In this setting, given a mollifier $m$, the convolution integral (\ref{regularization-expression}) can be written as
$$
m_\eps \ast X = f_\eps \frac{\partial}{\partial x} + g_\eps \frac{\partial}{\partial y}
$$
where, by a simple coordinate change, we can write the coefficient $h_\eps = f_\eps$ or $h_\eps = g_\eps$ as
\[
\begin{aligned}
h_\varepsilon(x,y)
&= \int_{\{x-\varepsilon u>0\}}
   h_+(x-\varepsilon u,\,y-\varepsilon v)\, m(u,v)\,du\,dv \\
&\quad + \int_{\{x-\varepsilon u<0\}}
   h_-(x-\varepsilon u,\,y-\varepsilon v)\, m(u,v)\,du\,dv .
\end{aligned}
\]
Note that the subscript in the first integral indicates that the integration is computed over the domain is $D_{x,\eps} = \{(u,v)\in \cR^2 :  x - \varepsilon u > 0\}$ (and similarly for the second integral). 

In order to exhibit some nice explicit expressions, we will henceforth suppose in this section that the mollifier has the special form of a product $m(x,y) = \mathbf{m}(x)\cdot\mathbf{m}(y)$, where $\mathbf{m}$ is a mollifier in $\cR$ such that
\begin{itemize}
    \item The support of $\mathbf{m}$ is contained in the interval $[-1,1]$.
    \item $\mathbf{m}$ is constant on the interval  $\left[-1 + \eta,1 - \eta\right]$.
\end{itemize}
where $\eta$ is some positive small constant (see Figure \ref{fig-mollifm}). 

\begin{figure}[htb] 
\centering
  \includegraphics[width=7.69519054178145cm]{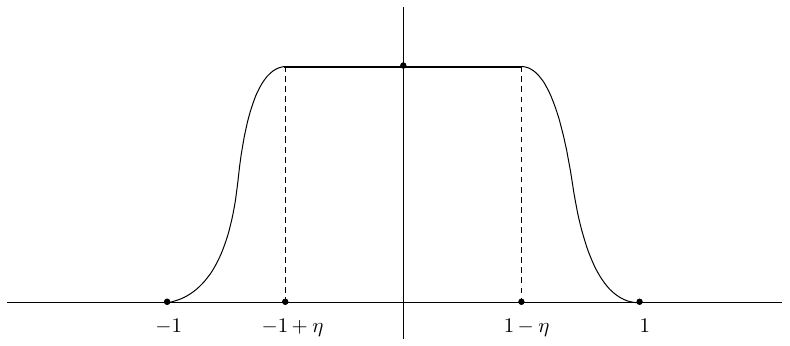}
  \caption{The mollifier $\mathbf{m}$.}\label{fig-mollifm}
\end{figure}

In other words, we suppose that our mollifier $m$ is a small perturbation of the characteristic function 
$\dfrac{1}{4}\mathds{1}_{[-1,1]^2}$. 
\begin{Remark}
We will see that, under such hypothesis, the regularization by convolution of an arbitrary {\em piecewise polynomial} vector field will result (in an appropriate blow-up chart) into a family of polynomial vector fields, {\em up to a correction term $O(\eta)$}, i.e.~a term with goes uniformly to 0 as $\eta \rightarrow 0$. 

We emphasize that this assumption on the mollifier is made solely for computational convenience. Indeed, all the phase portraits presented below are structurally stable and remain topologically equivalent for an arbitrary choice of mollifier.
\end{Remark}
\subsection {Regularization of the Sewing}
We assume that 
\[
X_+ = \frac{\partial}{\partial x} + \frac{\partial}{\partial y},\quad X_- = 2\frac{\partial}{\partial x} + \frac{\partial}{\partial y}
\]
In this case, using the fact that $\displaystyle \int m = 1$, we can write
\begin{align}
f_\eps(x,y)
&= \int_{\{x-\eps u>0\}} m(u,v)\,du\,dv
   + \int_{\{x-\eps u<0\}} 2\,m(u,v)\,du\,dv \notag\\
&= 1 + \int_{\{x-\varepsilon u<0\}} m(u,v)\,du\,dv, \\
\end{align}
Similarly, we conclude that  $g_\eps = 1$. Therefore, the regularized family is given by
$$
m_\eps \ast X = \left( 1+\int_{\{x-\eps u<0\}} m(u,v)\, du\, dv \right) \frac{\partial}{\partial x} + \frac{\partial}{\partial y}
$$
Let us now apply an {\em $\eps$-directional blowing-up}, given by the coordinate change 
$$x = \bar{\eps} \bar x, \quad \eps = \bar \eps.$$
The pull-back of the regularized family under such blowing-up assumes the form 
$$
\frac{1}{\eps} \left( 1 + \int_{\{u > x\}} m(u,v)\, du\, dv \right) \frac{\partial}{\partial x} + \frac{\partial}{\partial y}
$$
where we dropped the bars to simplify the notation. Upon multiplication by $\eps$ (which corresponds to a reparametrization of time) we obtain the family of vector fields
$$
\mathcal{X} = \left( 1 + \int_{\{u > x\}} m(u,v)\, du\, dv \right) \frac{\partial}{\partial x} + \eps \frac{\partial}{\partial y}
$$
which is easily seen to be globally smooth. This illustrates the {\em smoothing procedure} in this simple setting.

More explicitly, using the above assumptions on the mollifier, we can write
$$
\int_{\{(u,v)\in \cR^2: u > x\}} m(u,v)\, du\, dv = \frac{1}{2}(1-x), \quad \text{ for $|x| \le 1-\eta$}
$$
Therefore, restricted to the region $R = \{|x| \le 1-\eta\}$, the regularized family assumes simple the polynomial form
$$
\mathcal{X} =  \frac{1}{2}(3-x) \frac{\partial}{\partial x} + \eps \frac{\partial}{\partial y}
$$
\subsection{Regularization of other generic singularities}
Applying similar computations to other normal forms studied in \cite{BPT}, we obtain the smoothed families listed in the table below. 

We recall that the expressions of $\cal X$ in the rightmost column are written up to a $O(\eta)$-error term, where $\eta>0$ is the small parameter appearing in the definition of the mollifier $m$. 
\begin{table}[htb!]
\begin{tabular}{l|ccc}
Singularity & $X_+$ & $X_-$ & $\cal X$ mod $\mathrm{O}(\eta)$  \\
\hline
Escaping & $\frac{\partial}{\partial x} + \frac{\partial}{\partial y}$ & $-\frac{\partial}{\partial x} + \frac{\partial}{\partial y}$ 
& $x \frac{\partial}{\partial x}+ \eps\frac{\partial}{\partial y}$   \\
Sliding    &  $-\frac{\partial}{\partial x} - \frac{\partial}{\partial y}$  &    $\frac{\partial}{\partial x} - \frac{\partial}{\partial y}$    
&   $-x\frac{\partial}{\partial x}-\eps \frac{\partial}{\partial y}$  \\
Saddle&    $(x+1)\frac{\partial}{\partial x} - y\frac{\partial}{\partial y}$      &   $(x-1)\frac{\partial}{\partial x} - y\frac{\partial}{\partial y}$     
&   $\left( x + \e x \right)\frac{\partial}{\partial x }-\eps y\frac{\partial}{\partial y}$                                         \\
 Fold-regular    &   $y\frac{\partial}{\partial x} + \frac{\partial}{\partial y}$    &  $\frac{\partial}{\partial x} + \frac{\partial}{\partial y}$    
 &  $\frac{1}{2}\left(1-x + y(1+x)\right)\frac{\partial}{\partial x }+\eps \frac{\partial}{\partial y}$  \\
 Saddle-node    &   $-\frac{\partial}{\partial x} - y^2\frac{\partial}{\partial y}$    &  $\frac{\partial}{\partial x}$    
 &  $-x\frac{\partial}{\partial x }-\eps(1 +  x ) (\frac{\eps^2}{6}+\frac{y^2}{2})\frac{\partial}{\partial y}$  \\
 Elliptic fold    &   $-y\frac{\partial}{\partial x} + \frac{\partial}{\partial y}$    &  $\pm y\frac{\partial}{\partial x} \pm \frac{\partial}{\partial y}$    
 &  $-x y\frac{\partial}{\partial x }+\eps \frac{\partial}{\partial y}$  \\
 Hyperbolic fold    &   $y\frac{\partial}{\partial x} + \frac{\partial}{\partial y}$    &  $2y\frac{\partial}{\partial x} - \frac{\partial}{\partial y}$    
 &  $\frac{1}{2}(y - 3  x  y)\frac{\partial}{\partial x }+\eps x\frac{\partial}{\partial y}$  \\
Parabolic fold    &   $ -y\frac{\partial}{\partial x} - \frac{\partial}{\partial y}$    &  $2y\frac{\partial}{\partial x} + \frac{\partial}{\partial y}$    
&  $\frac{1}{2} (y - 3 x  y)\frac{\partial}{\partial x }- \eps x \frac{\partial}{\partial y}$                                                         
\end{tabular}
\end{table}

The Figures~\ref{fig3}, \ref{fig4}, \ref{fig5}, and \ref{fig6} illustrate the phase portraits of the corresponding regularized families 
$m_\varepsilon \ast X$ for $\eps = 1/10$.

\begin{figure}[!htb]
	\centering 
	  \includegraphics[width=6cm, height=4cm]{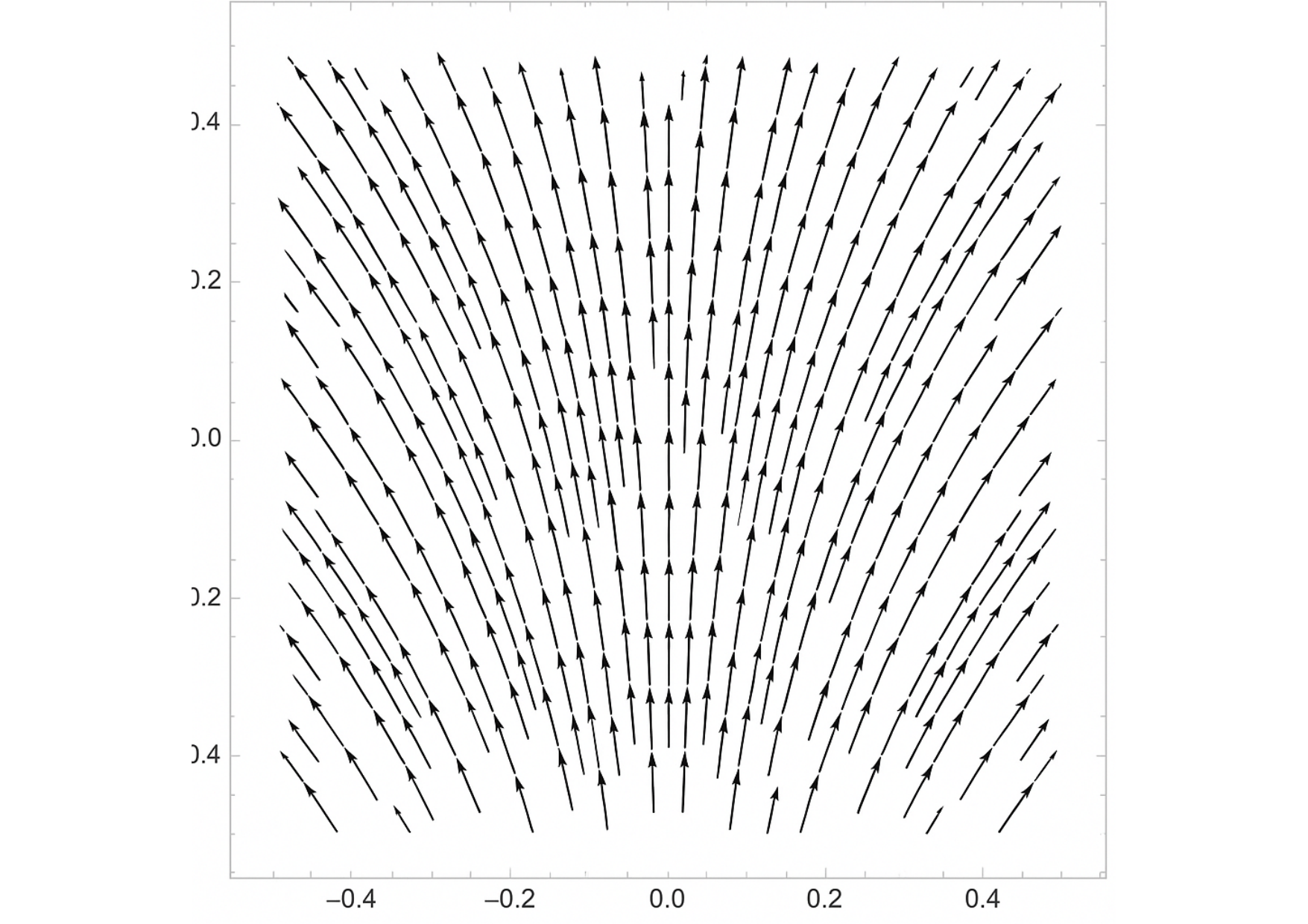} 
 \includegraphics[width=6cm, height=4cm]{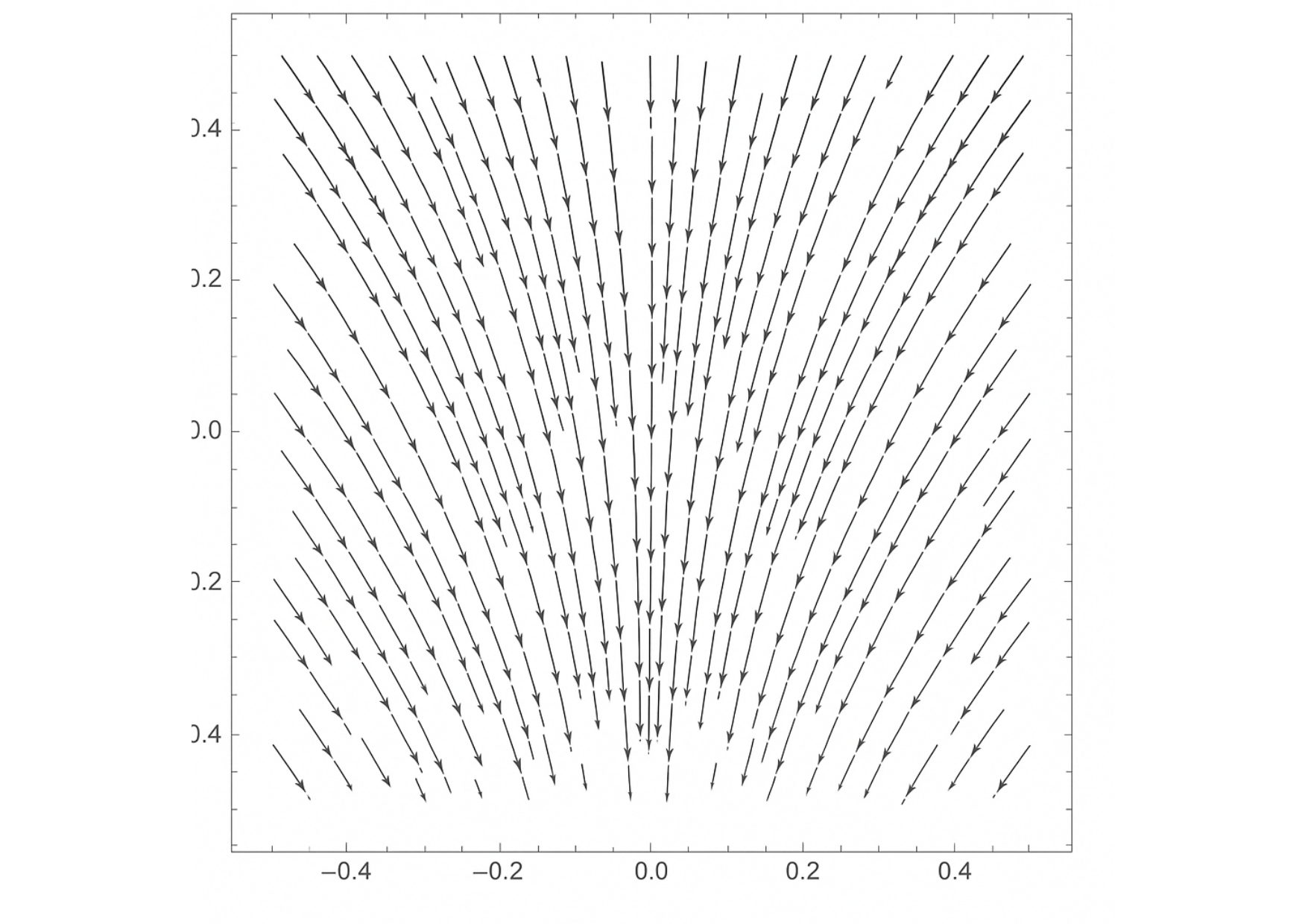} 
	\caption{Regularization around  escaping and  sliding points.}\label{fig3}
\end{figure}

\begin{figure}[!htb]	\centering 
	\includegraphics[width=6cm, height=4cm]{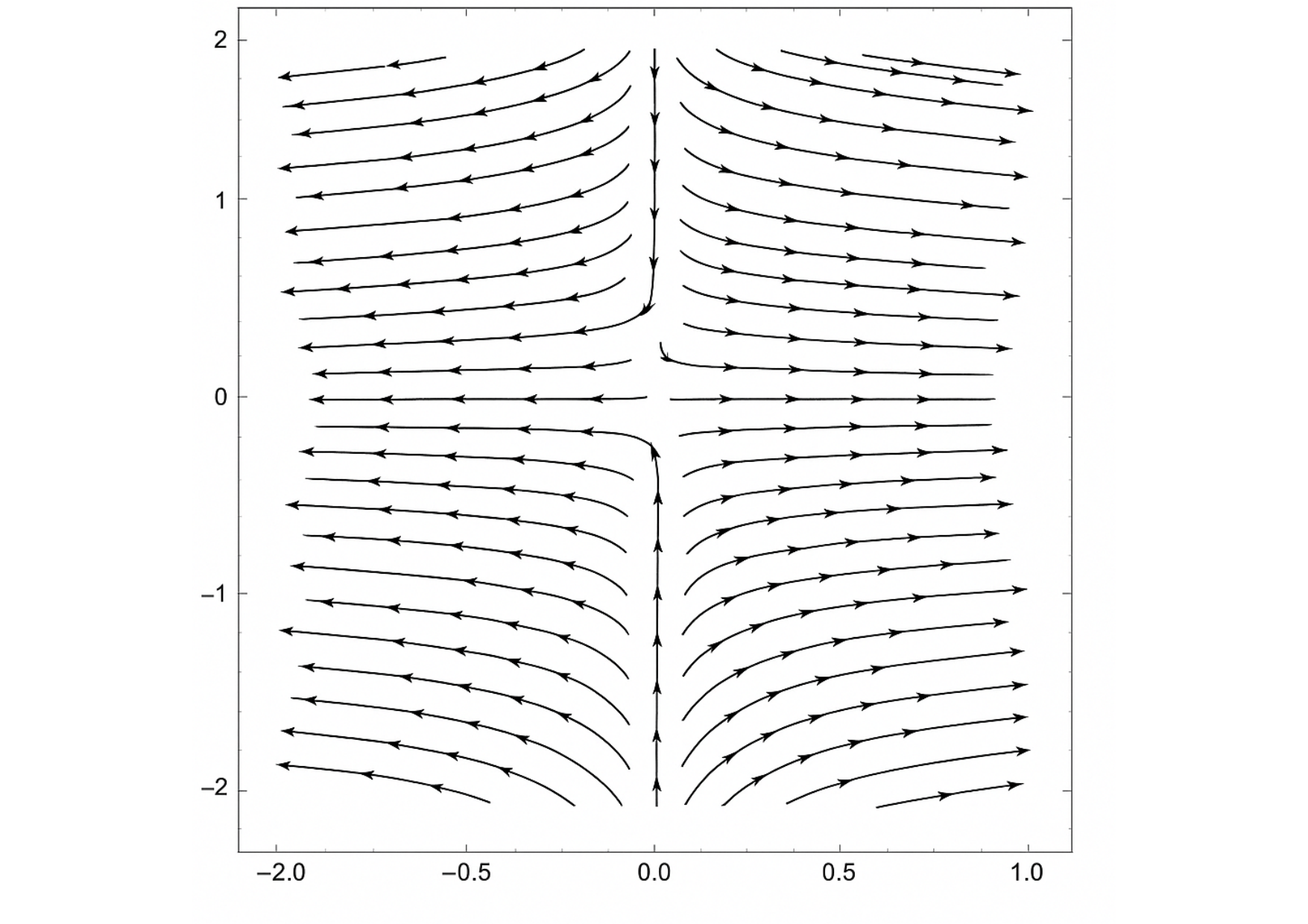} \includegraphics[width=6cm, height=4cm]{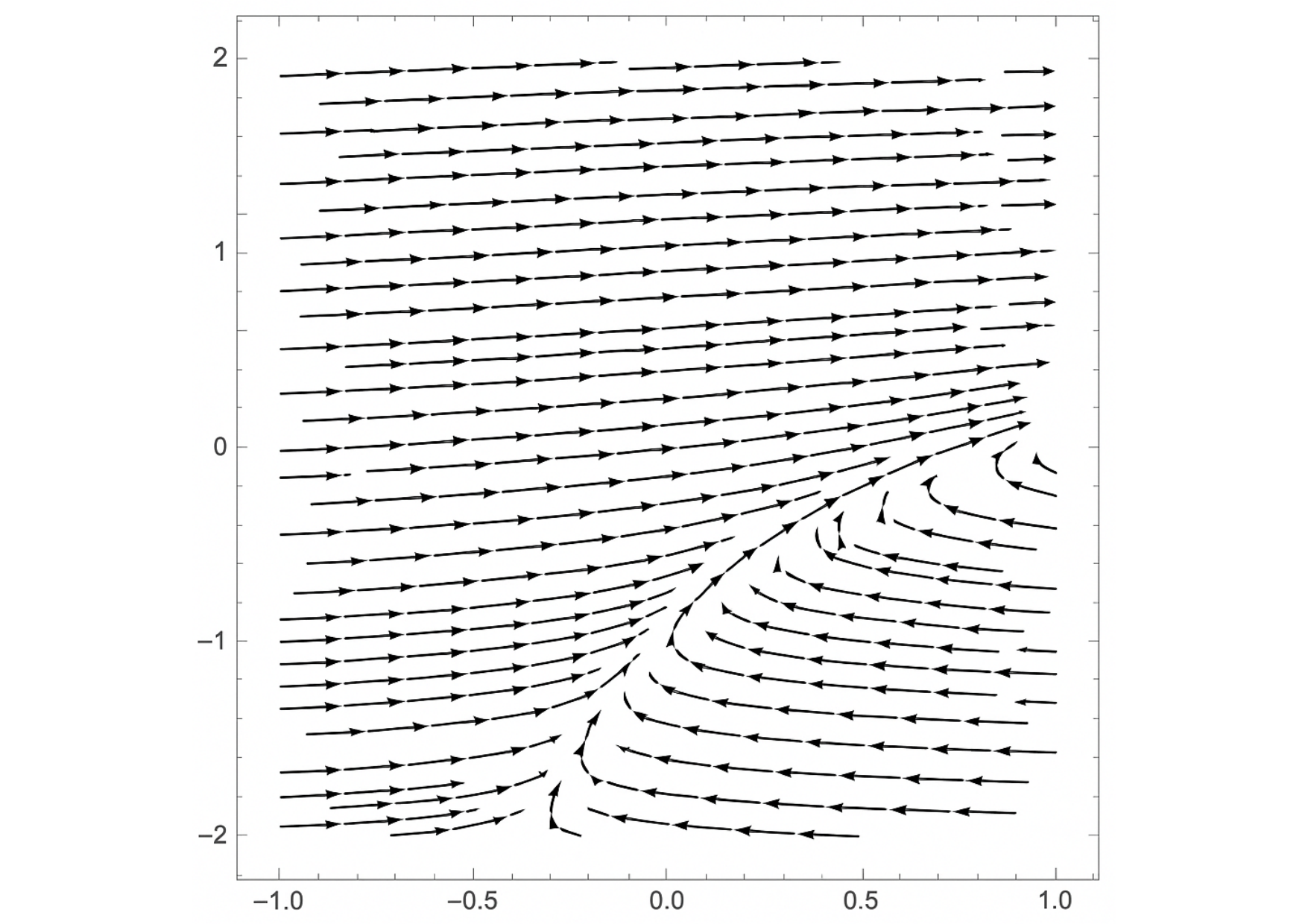} 
	\caption{Regularization around  saddle and   fold-regular points.}\label{fig4}
\end{figure}

\begin{figure}[!htb]
	\centering 
	\includegraphics[width=6cm, height=4cm]{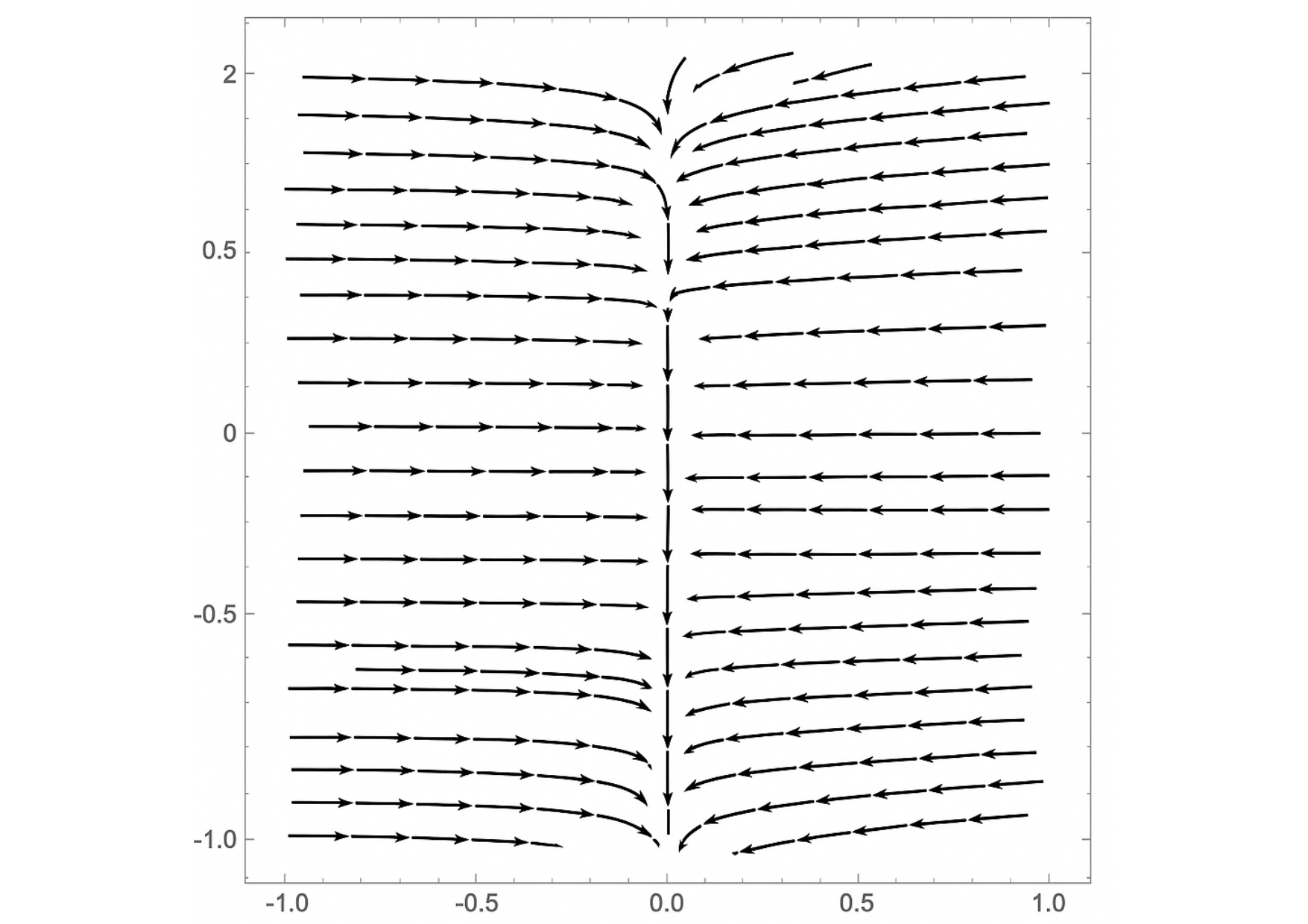} \includegraphics[width=6cm, height=4cm]{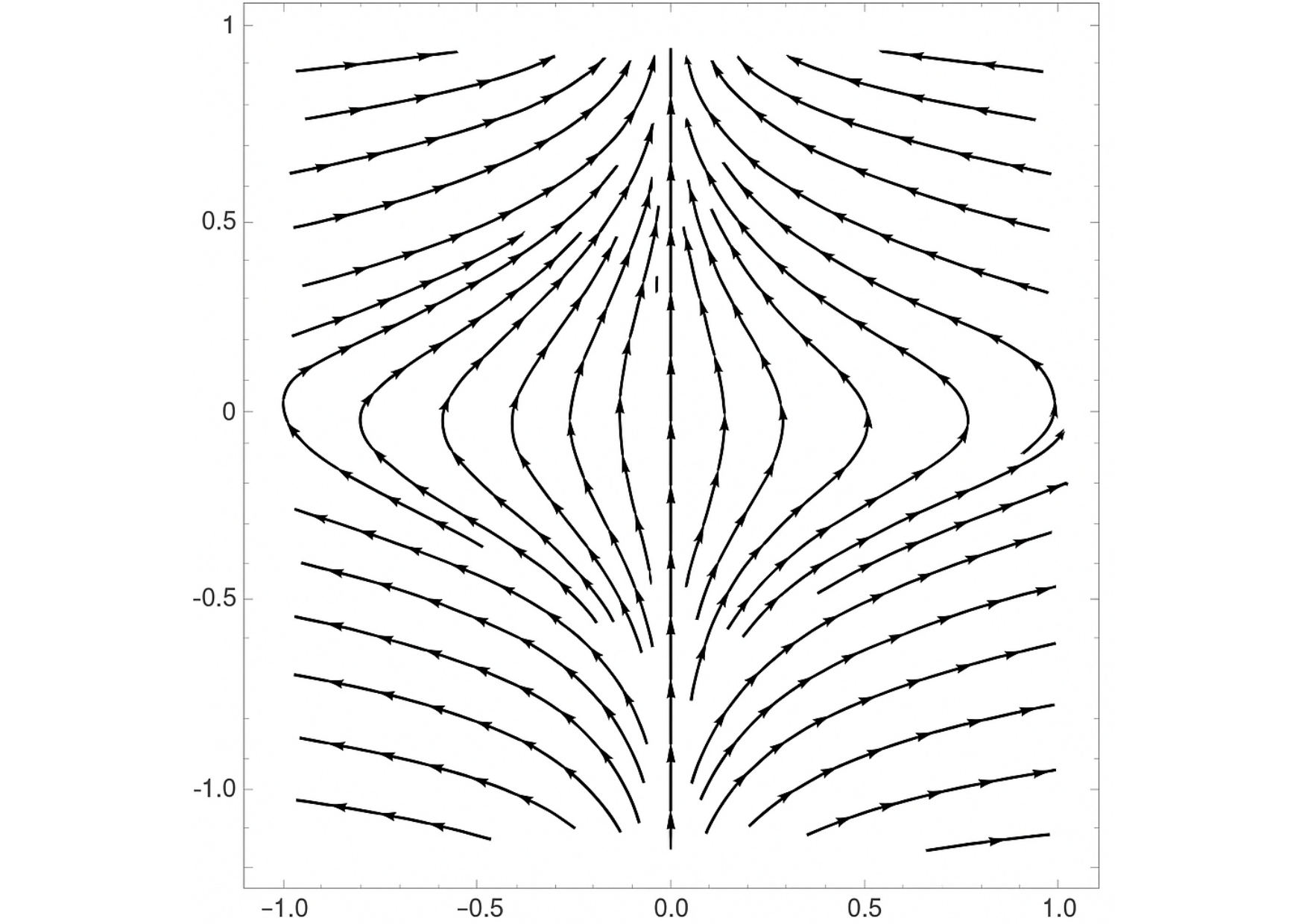} 
	\caption{Regularization around  saddle-node and elliptical fold  points.}\label{fig5}
\end{figure}

\begin{figure}[!htb]
	\centering 
	\includegraphics[width=6cm, height=4cm]{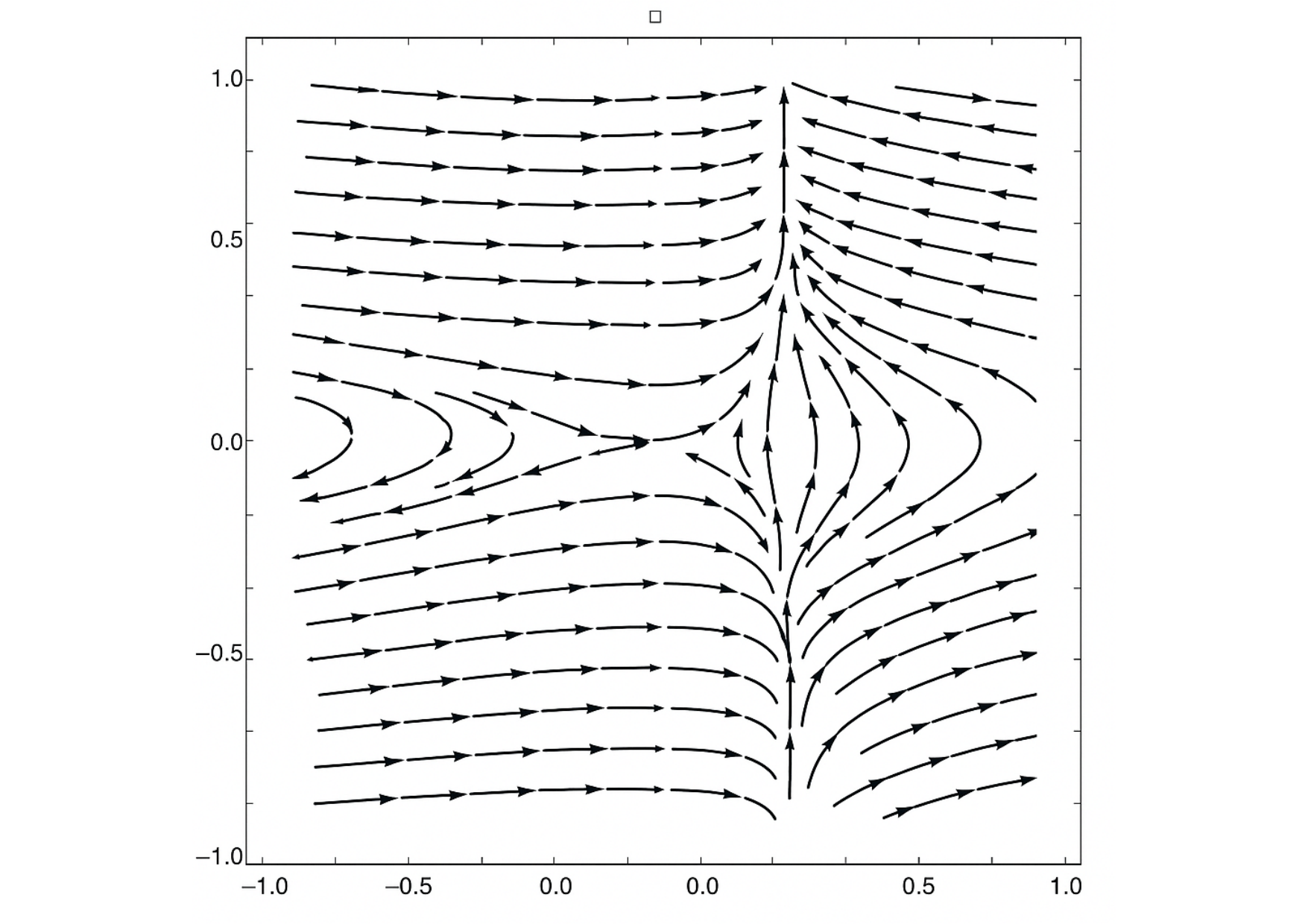} \includegraphics[width=6cm, height=4cm]{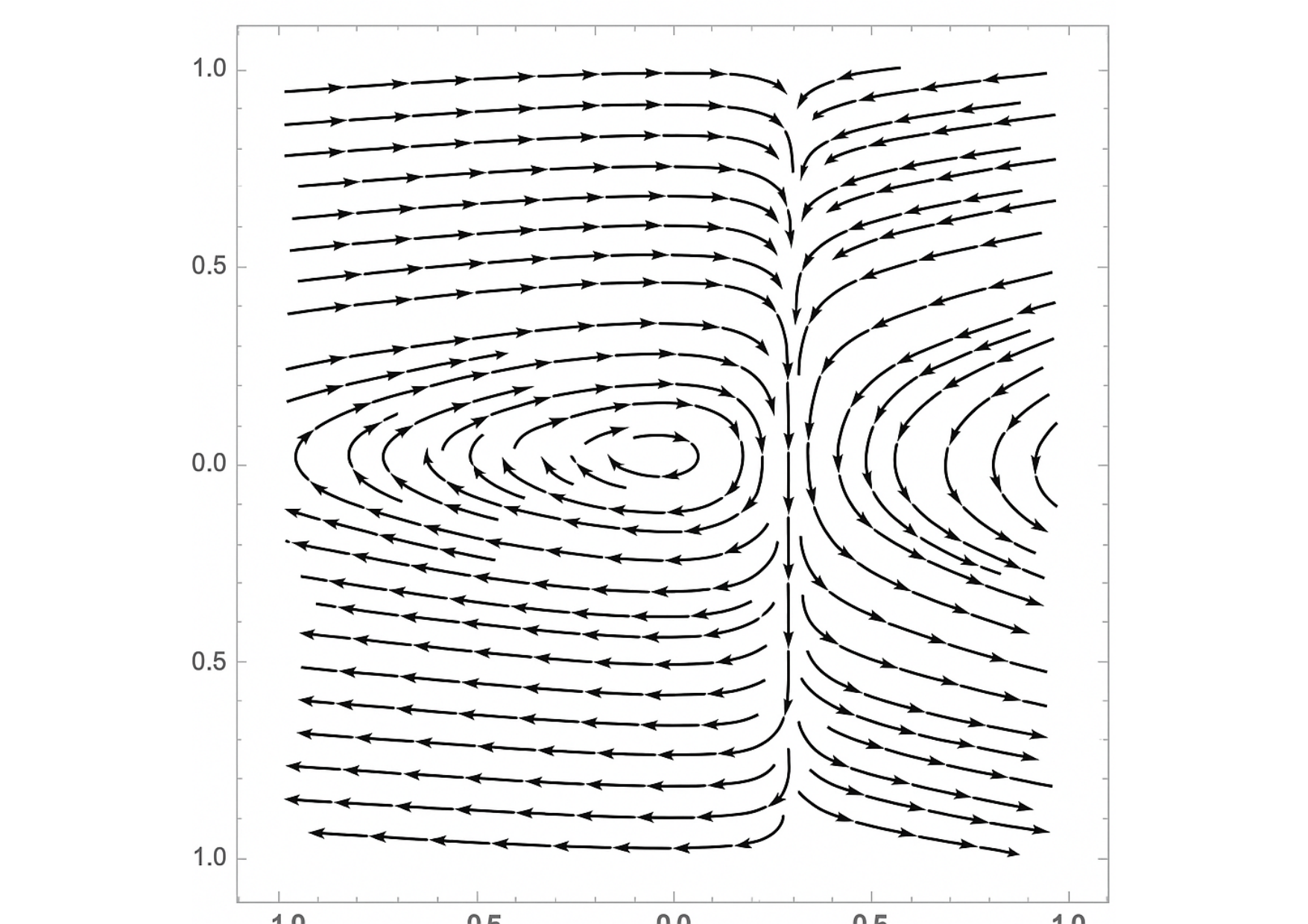} 
	\caption{Regularization around  hyperbolic   and parabolic folds points.}\label{fig6}
\end{figure}
\subsection {A family of piecewise smooth vector field with poly-trajectories.} 
Let us now illustrate the regularization by convolution of a piecewise smooth vector fields presenting families of {\em closed poly-trajectories}. We refer the reader to
\cite{SM} for the relevant definitions. 

Consider the piecewise-smooth one-parameter family $X_\lambda$ with discontinuity locus $\Sigma = \{y=0\}$ given by
$$X_{\lambda}=
\mathds{1}_{\{y>0\}}X_{+,\lambda}+\mathds{1}_{\{y<0\}}X_-$$
where
\begin{equation}X_{+,\lambda}=\dfrac{\partial}{\partial x}+(-3(x+\lambda)^2+
2(x+\lambda)+\dfrac{7}{4})\dfrac{\partial}{\partial y}\label{falam}\end{equation}
and
\begin{equation}X_-=-\dfrac{\partial}{\partial x}+(3x^2-7x+2)\dfrac{\partial}{\partial y},\label{falam1}\end{equation}
which is illustrated in the Figure \ref{fig-polytraj}. 

 \begin{figure}[!htb]
	\begin{center}
		\begin{overpic}[width=4cm]{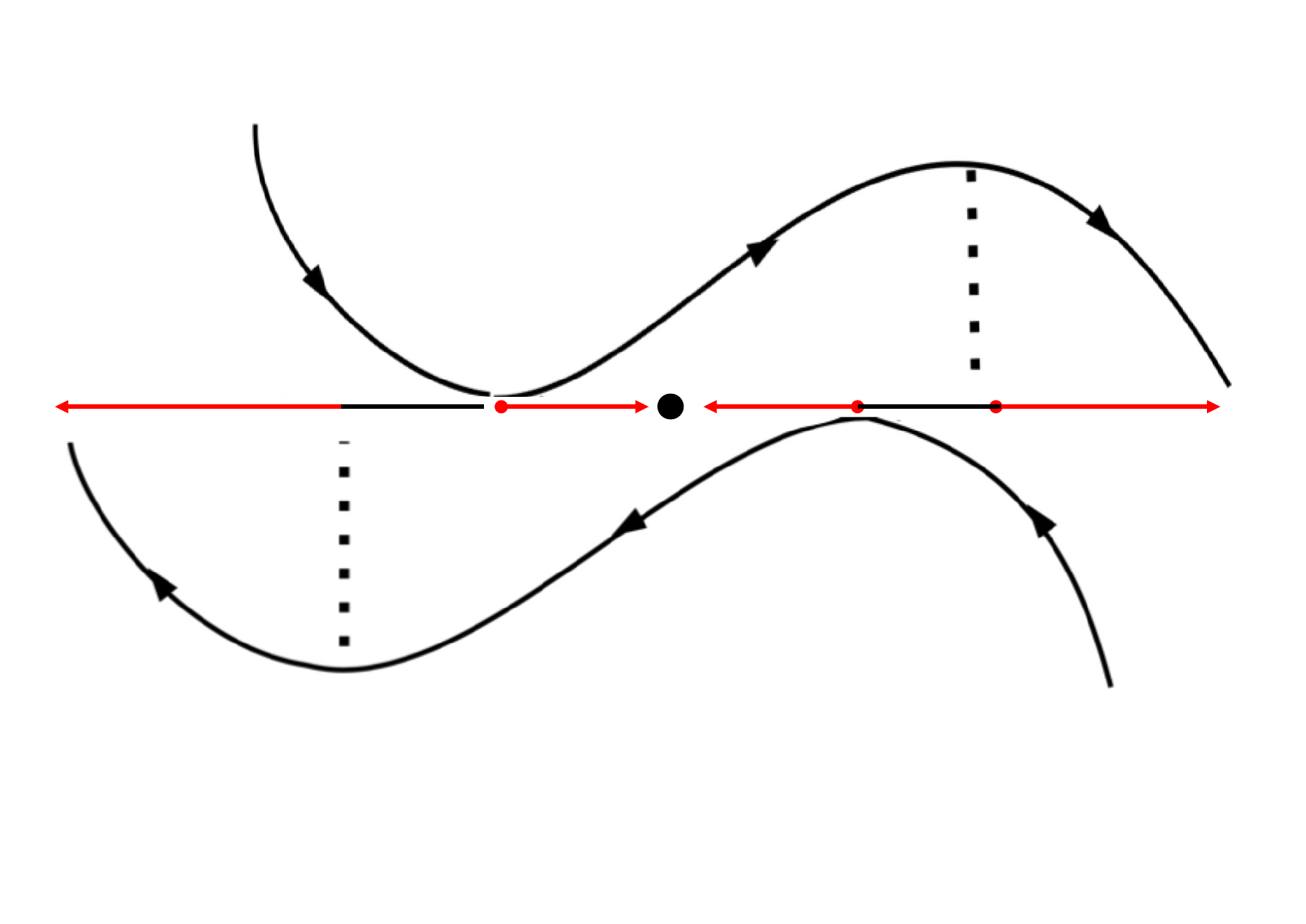}
				\put(35,12){$\lambda<-\dfrac{5}{6}$}		
			\end{overpic}\quad
		 \begin{overpic}[width=4cm]{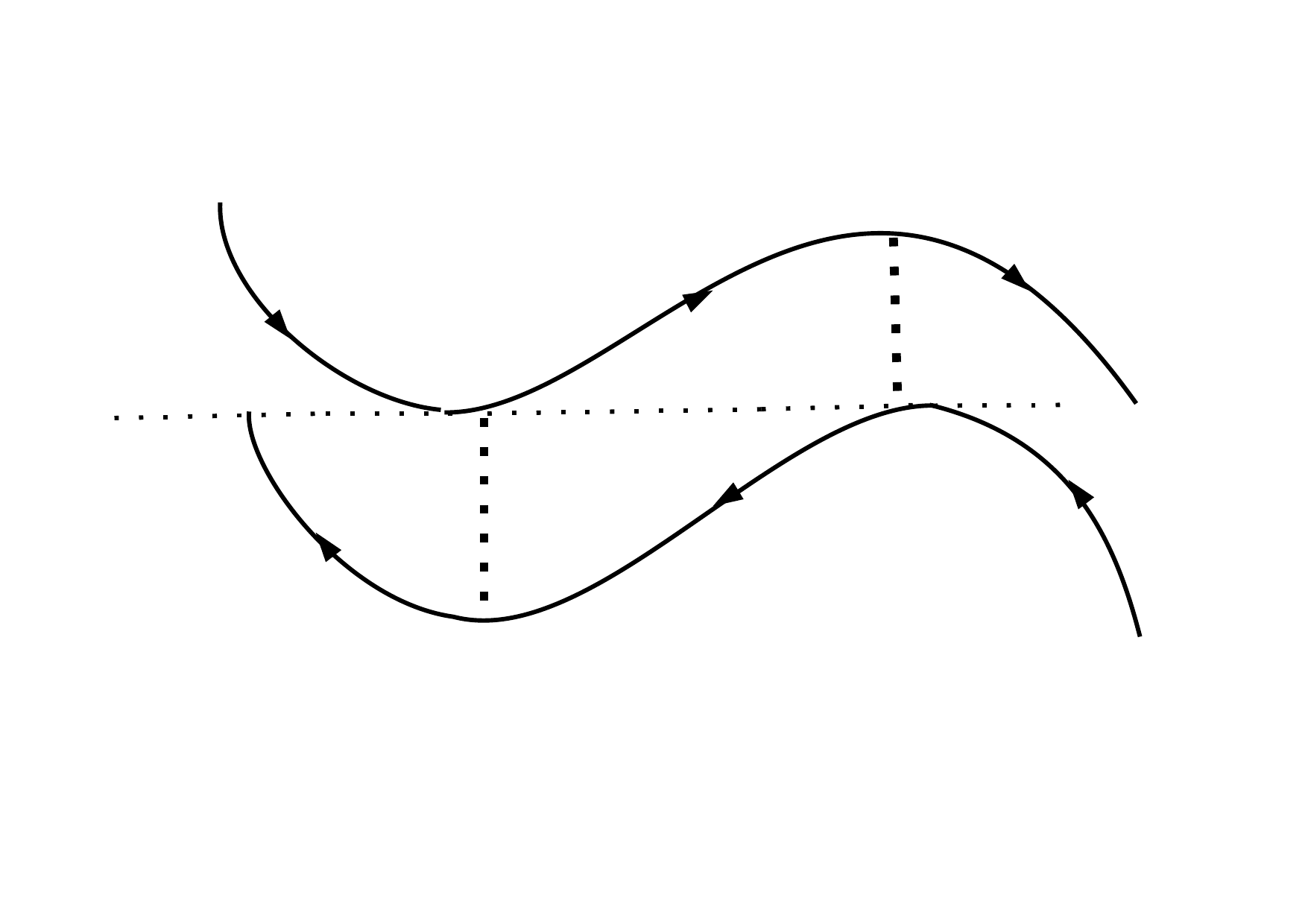}
			\put(35,12){$\lambda=-\dfrac{5}{6}$}	
		\end{overpic}\\

		\begin{overpic}[width=4cm]{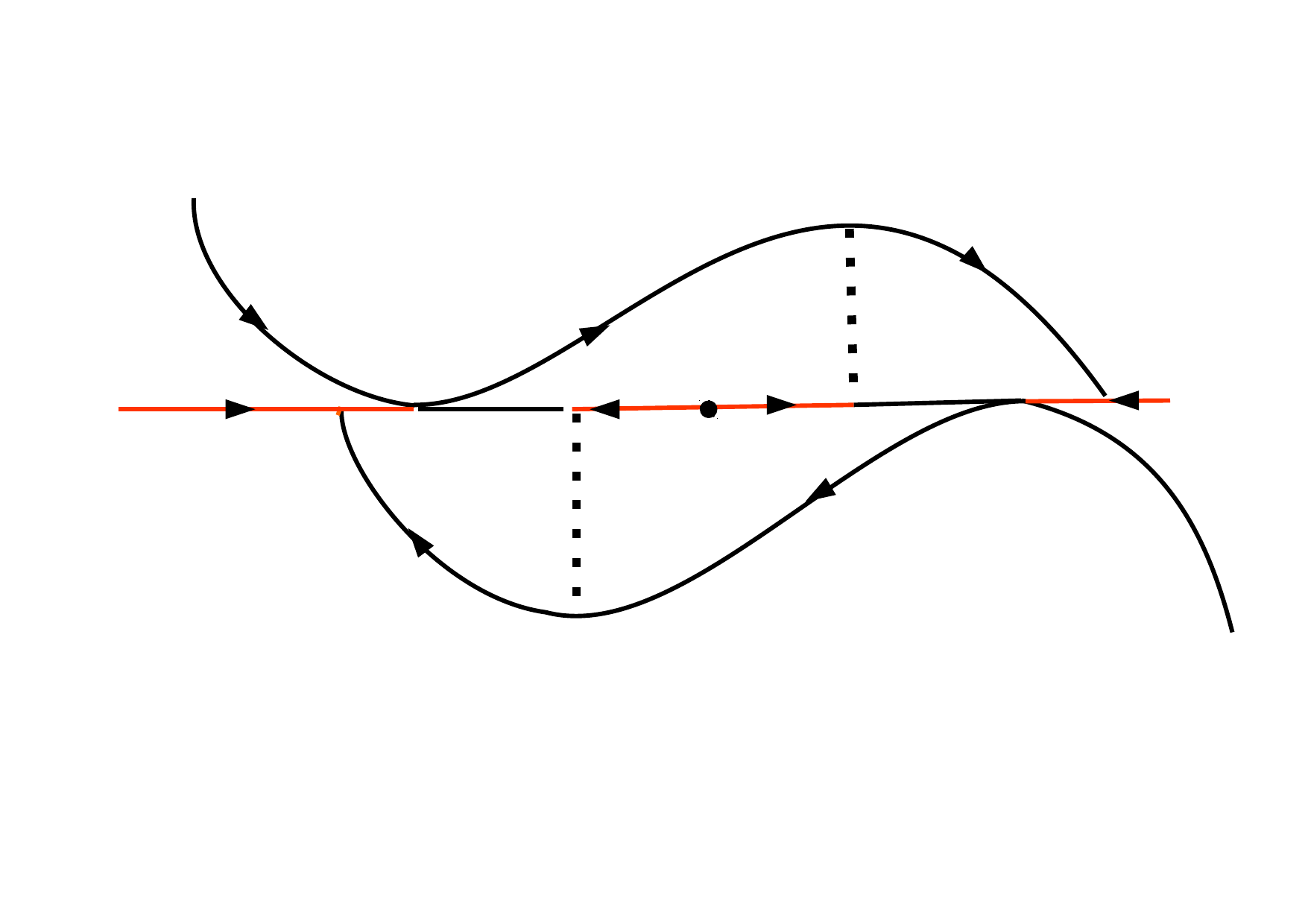}
		\put(49,14){$-\dfrac{5}{6}<\lambda<0$}
		\end{overpic}\quad
		\begin{overpic}[width=4cm]{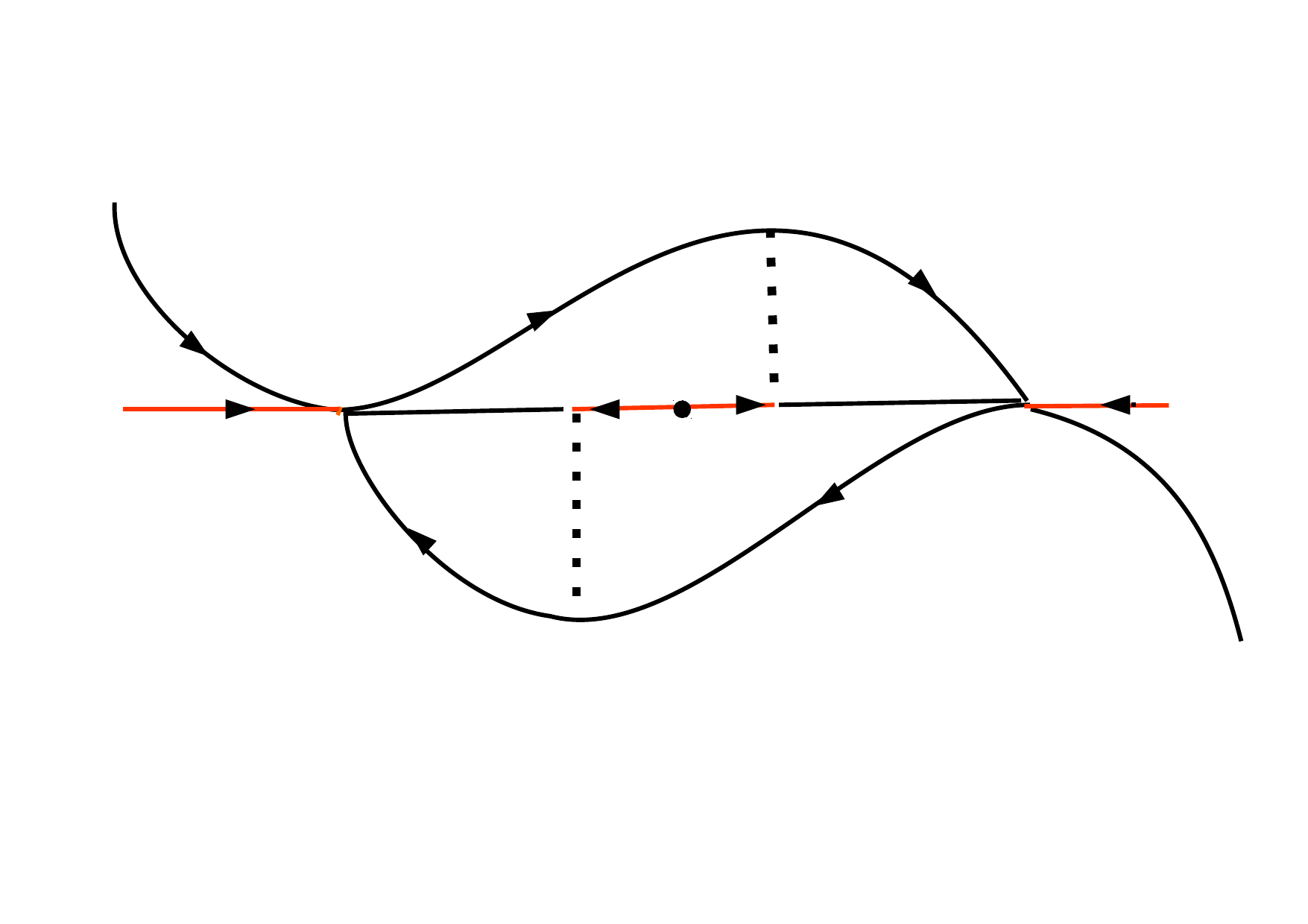}
		\put(49,14){$\lambda=0$}
		\end{overpic}
		\\
		\begin{overpic}[width=4cm]{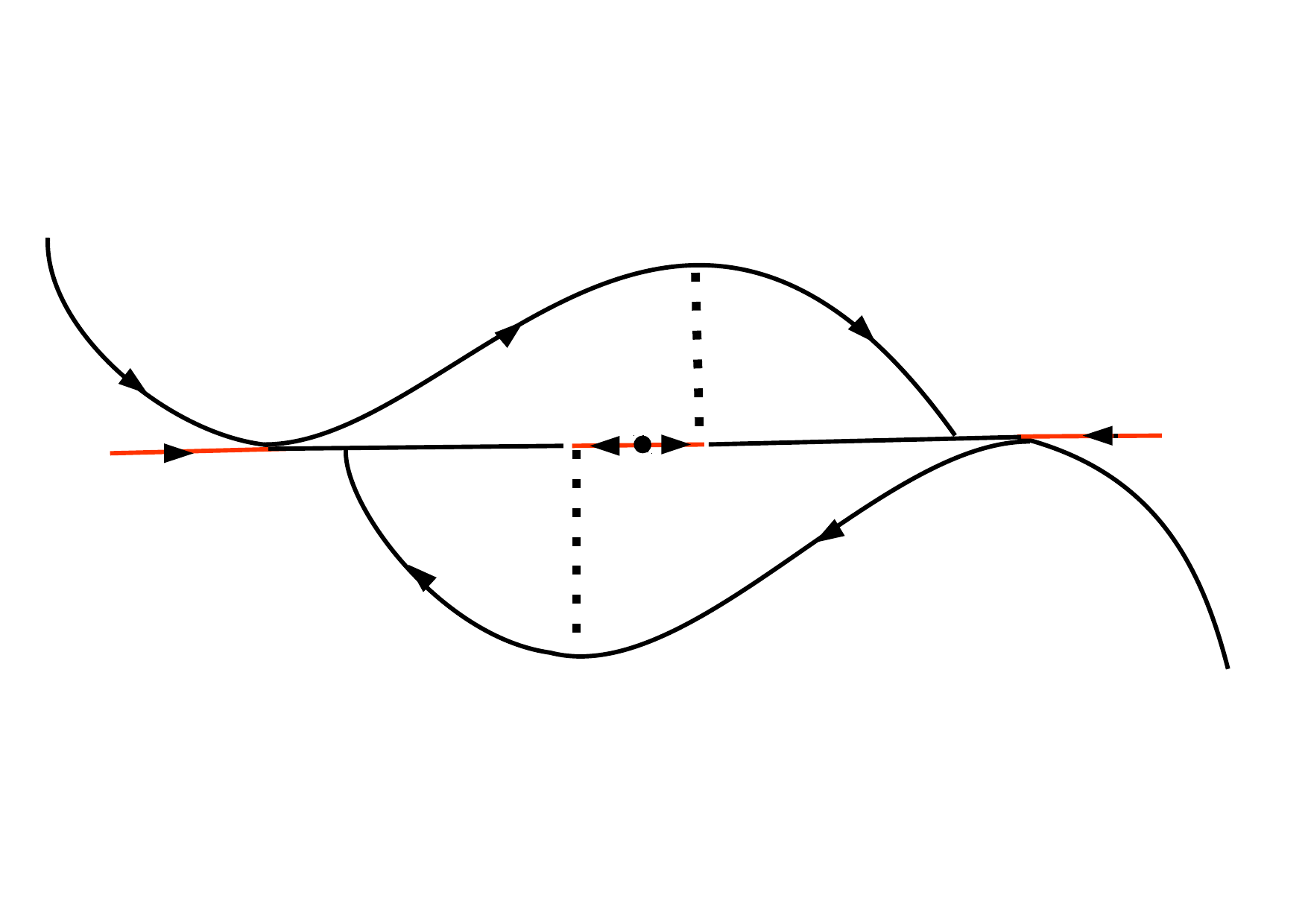}
			\put(35,12){ $0<\lambda<\dfrac{5}{6}$}
		\end{overpic}\quad
		\begin{overpic}[width=4cm]{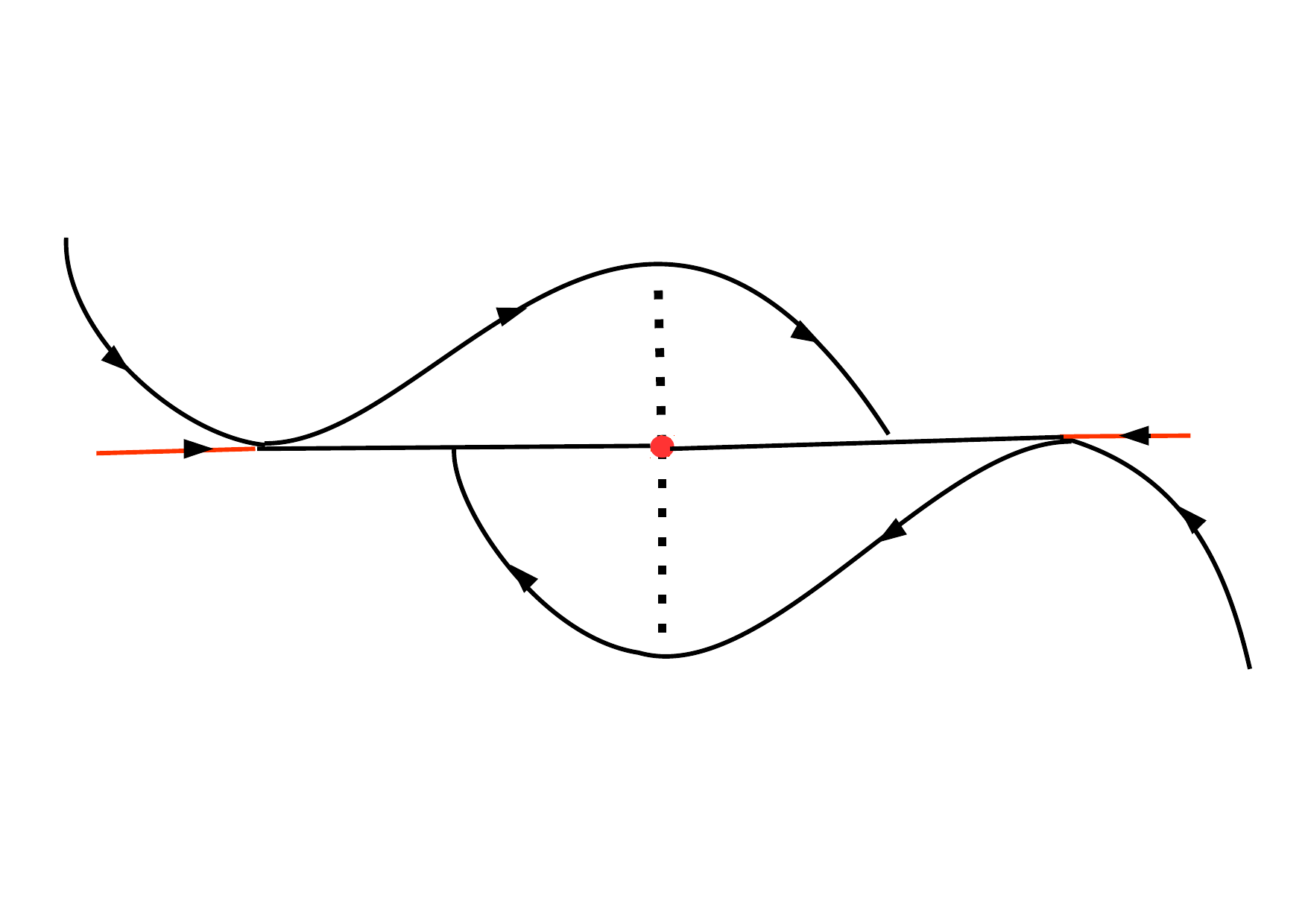}
			\put(49,14){$\lambda=\dfrac{5}{6}$}
	\end{overpic} 
	
	\end{center}
	
		\caption{\small Phase portrait of $X_{\lambda}$ for $-\dfrac{5}{6}\leq\lambda\leq\dfrac{5}{6}$. 
		The dotted lines at $y=0$ are the equilibrium points of the sliding field. 
		The solid black lines at $y=0$ are sewing points and the red lines at $y=0$ represent the sliding  or escaping points.} \label{fig-polytraj}
\end{figure}

It is not hard to see that closed poly-trajectories of $X_{\lambda}$
can occur only for $-\dfrac{5}{6}<\lambda<\dfrac{5}{6}$. In this case 
the sliding and sewing regions are the subsets $\Sigma^{sl} ,\Sigma^{sw} \subset \Sigma$ given respectively by
$$\Sigma^{sl}=\left(-\infty,-\frac{1}{2}-\lambda\right)\cup\left(\frac{1}{3},\frac{7}{6}-\lambda\right)\cup\left(2,+\infty\right)$$
and 
$$\Sigma^{sw}=\left(-\frac{1}{2}-\lambda,\frac{1}{3}\right)\cup\left(\frac{7}{6}-\lambda,2\right).$$
Notice that both $X_{+,\lambda}$ and $X_-$ have two distinct fold points, which are situated respectively at the coordinates $(-\frac{1}{2}-\lambda,0)$, $(\frac{7}{6}-\lambda,0)$ and  $(\frac {1}{3},0)$, $(2,0).$ We further observe that, for the parameter value $\lambda = -5/6$, the system presents the symmetry 
$$X_{+,\lambda} = -X_-.$$
Let us now describe (without proofs) some of the dynamical features of the regularized family $m_\eps \ast X_\lambda$ for $\eps > 0$ sufficiently small: 
\begin{itemize}
\item[(a)] For each parameter value $\lambda$ in the open interval $(-\frac{5}{6},0)$, the family $m_\eps \ast X_\lambda$ has an attracting limit cycle $\Gamma_{\lambda,\eps}$. As $\eps \rightarrow 0$, $\Gamma_{\lambda,\eps}$ converges (with respect to the Hausdorff distance) to a poly-trajectory $\Gamma_{\lambda}$ of $X_\lambda$ containing two fold points.  
\item[(b)] For each parameter value $\lambda$ in the open interval $(0,\frac{5}{6})$, the family $m_\eps \ast X_\lambda$ has an attracting limit cycle $\Gamma_{\lambda,\eps}$ which converges, as $\eps \rightarrow 0$ to a sewing-type poly-trajectory of $\Gamma_{\lambda}$ of $X_\lambda$.
\item[(c)] As $\lambda$ crosses the value $5/6$, the family of cycles $\Gamma_{\lambda,\eps}$ disappears in a Hopf-type supercritical bifurcation.
\end{itemize}
In the figure below, we illustrate several phase portraits obtained numerically for the regularized family with $\eps = \tfrac{1}{100}$.

\begin{figure}[!htb]
\centering
\begin{overpic}[width=13.5cm]{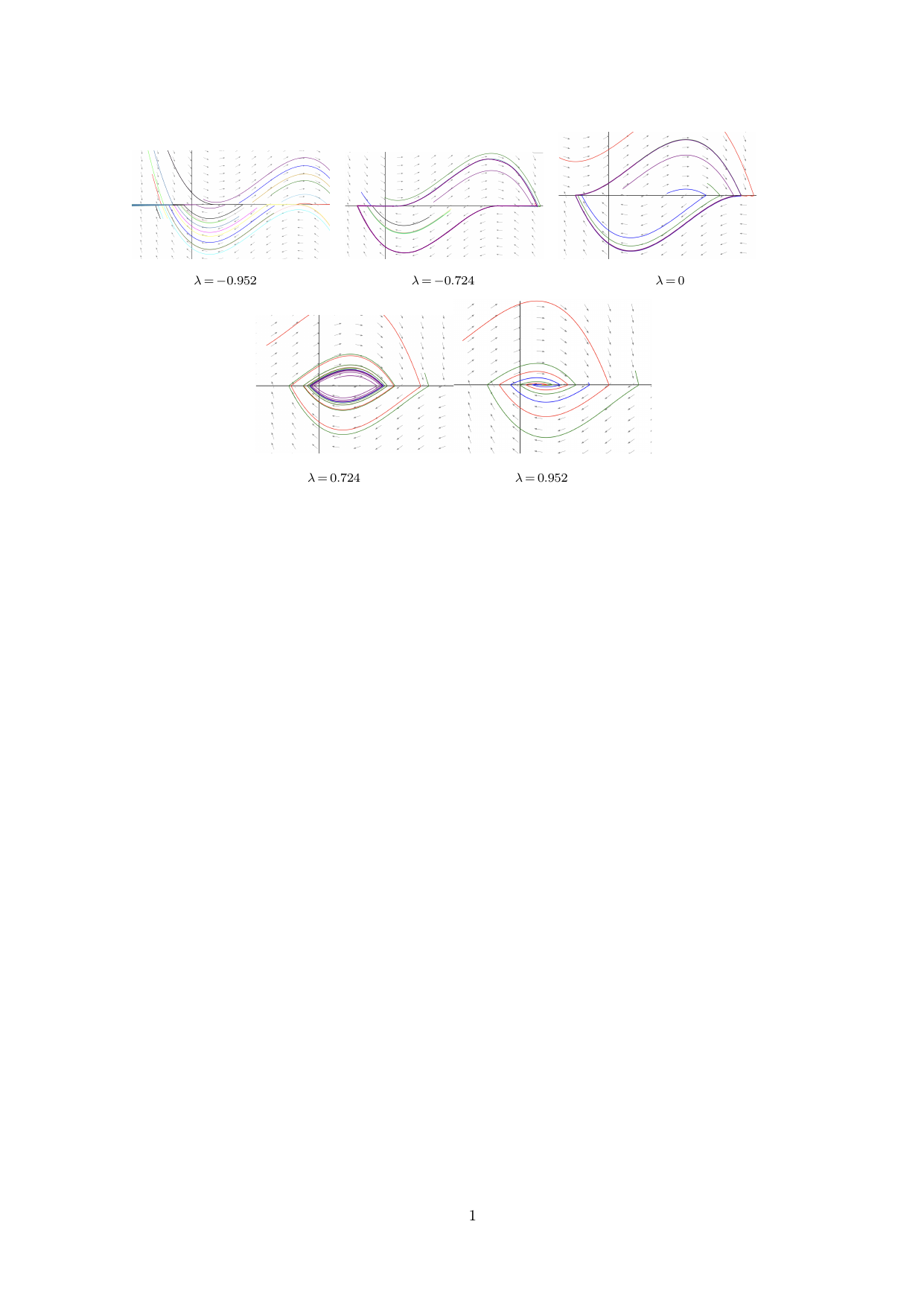}			
\end{overpic}
\caption{Phase portrait of  $m_\eps \ast X_\lambda$ for $\eps = \frac{1}{100}$.}
\end{figure}

We remark that, using the same mollifier of the previous subsection, the regularized family can be written, in the coordinates of the $\eps$-directional blowing-up as 
\[
\mathcal{X} = \eps y \frac{\partial}{\partial x} + 
G(x, y, \varepsilon, \lambda) \frac{\partial}{\partial \bar{y}},
\]
where, restricted to the region $R = \{|y| < 1 - \eta\}$, the coefficient $G$ is the polynomial 
{\small 
\[G=\left(\frac{15}{8} + \lambda - \frac{3}{2}\lambda^2\right)
+ \left(-\frac{5}{2} - 3\lambda\right)x
+ \left(-\frac{1}{8} + \lambda - \frac{3}{2}\lambda^2\right) y
+ \left(\frac{9}{2} - 3\lambda\right)x y- 3x^2 y.\]
}
(up to a $O(\eta)$-correction term).

\section{Regularization by convolution}\label{s4}
This section introduces the main technical tools used in the paper. We begin by recalling the basic definitions of manifolds with corners, directional blow-up maps, and regularization by convolution. The main result is the smoothing procedure, which is proved in Theorem~\ref{theorem-smoothing-rpss}.

\subsection{Piecewise smooth spaces and functions}\label{subsect-piecewisesmoothfcs}
We work in the category of smooth manifolds with corners. Let us briefly recall the definitions and refer to \cite{Melrose} for a  detailed treatment.  A manifold with corners  of dimension $n$ is a paracompact Hausdorff space $M$ with a smooth structure which is locally
modeled by local charts which are open subsets of $\cR^k_{\ge 0}\times \cR^{n-k}$, for some $0\le k \le n$.  
A submanifold of a manifold with corners is a closed subset $M' \subset M$ which locally expressed (in the coordinate charts of $M$) as a product of open subsets in
$$\cR^{k'}_{\ge 0} \times \{0\}\subset \cR^k_{\ge 0},\quad\text{and}\quad \cR^{n-k''}\times\{0\} \subset \cR^{n-k}$$ 
for some integers $k',k''$ such that $0 \le k' \le k \le k'' \le n$.  In this case, $M'$ has codimension $k''-k'$.  A boundary component of $M$ is a closed connected submanifold as above such that $k''=k$.  The boundary of $M$ is the union of all boundary components, and is denoted by $\partial M$. 

A {\tmem{smooth normal crossings
variety}} in $M$ is a closed subset $\Sigma \subset M$ given by a finite union
of codimension one smooth submanifolds, called {\tmem{components of $\Sigma$}}, satisfying the following transversality
property: at each point $p$ there are local coordinates $x = (x_1, \ldots,
x_n)$ such that $x (p) = 0$ and
\begin{equation}
  \Sigma \cap V = \{ x_1 \ldots x_c = 0 \} \label{loc-adapted-coord}
\end{equation}
for some $0 \le  c \le  n$. We will say that these local coordinates
are {\tmem{adapted}} to $(M, \Sigma)$. The number $\comp(p) = c$ will be called
the {\em number of local components} of $\Sigma$ incident at $p$.

\begin{figure}[htb]
\centering
\includegraphics[width=7.75554243736062cm,height=4.88899547422275cm]{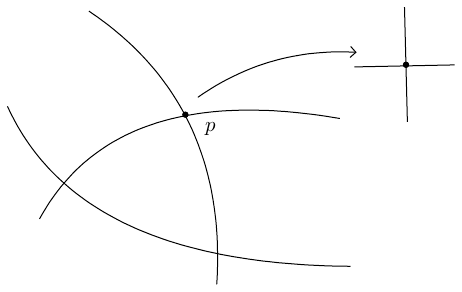}
\end{figure}

A {\tmem{piecewise smooth space}} is a pair $(M, \Sigma)$ formed by:
\begin{enumeratenumeric}
\item A manifold
with corners $M$ of dimension $n$
\item A (possible empty) smooth normal
crossings variety $\Sigma$.
\end{enumeratenumeric}
We denote by $C^{\infty} (M, \Sigma)$ the space of smooth functions $f \in
C^{\infty} (M \setminus \Sigma)$ satisfying the following {\em extension property}:
For each connected component $W \subset M \setminus \Sigma$, the restriction
$f |_W \nobracket$ extends to a smooth function on $M$. 

The topology on $C^{\infty} (M, \Sigma)$ is the topology defined by the semi-norms
\[ \| f \|_{K, m} = \sum_{| \alpha | \le  m} \sup_K \left|
   {\partial^{\alpha}}  f (x) \right| \]
with $K$ varying over all compacts of $M$ and all $m \in \mathbb{N}$. We
remark that although $f$ is not defined at $\Sigma$, the above supremum is to
be taken among all smooth restrictions $f |_W \nobracket$ of $f$ to connected
components $W$ intersecting the compact $K.$ We will say that $C^{\infty} (M,\Sigma)$ is the space of \tmem{piecewise-smooth functions in M} with
discontinuity locus $\Sigma$.

Note that if $\Sigma = \emptyset$ then $C^{\infty}(M,\Sigma) = C^{\infty}(M)$ is simply the space of globally smooth functions on $M$, equipped with its usual topology. 
\begin{Remark}
Using adapted local coordinates $(x_1, \ldots, x_n)$ as above, a function $f
\in C^{\infty}(M, \Sigma)$ can be locally expressed as
\begin{equation} \label{loc-presentation}
  f = \sum_{\underline{s}} \tmmathbf{1}_{\{ \underline{s} \underline{x}  > 0 \}} f_{\underline{s}}
\end{equation}
where the sum is taken over all $c$-uples of signs $\underline{s} = (s_1,..,s_c)\in \{+,-\}^c$, which it will be convenient to identify with the set of {\em sign functions} 
\[
\begin{aligned}
\underline{s} : \llbracket 1, c \rrbracket &\longrightarrow \{+, -\}, \\
i &\longmapsto \underline{s}(i) = s_i .
\end{aligned}
\]
where note $\llbracket 1, c \rrbracket=\{1,\ldots,c\}$. We remark that the expression $\tmmathbf{1}_{\{ \underline{s} \underline{x}  > 0 \}}$ denotes the characteristic function of the set
$\{ s_1 x_1 >  0, \ldots, s_c x_c >  0 \}$ and that $\left\{ {f_{\underline{s}}}
\right\} $ is a collection of smooth functions in $V$. 
\end{Remark}
The above expansion
will be called a {\tmem{local presentation of $f$}} and the functions $f_{\underline{s}}$
will be called the {\tmem{local components of $f$}}.
\subsection{Regularization of piecewise-smooth functions}\label{subsection-regularizafcn}
In this subsection, we will suppose $(M,\Sigma)$ is a piecewise smooth space such that $M$ is an open subset of $\cR^n$.

Let us review the operation of regularization by convolution for functions in $L^1_{\mathrm{loc}}(M)$, the space of locally integrable functions on $M$. Recall that the singular support of a function $f \in L^1_{\mathrm{loc}}(M)$ is the closed subset
$$\tmop{singsupp} (f)=\{x \in M : \forall r>0,\, f|_{B_r(x)} \text{ is not smooth}\}$$ 
 A \tmem{mollifier} in $\cR^n$ is a smooth function $m\in C^\infty(\cR^n)$ satisfying the
following conditions: \
\begin{enumeratenumeric}
  \item $m \ge  0$
  \item $\int_{\cR^n} m = 1$, and
  \item $m$ has compact support. 
\end{enumeratenumeric}
We denote by $\Mol(\cR^n)$ the set of mollifiers in $\cR^n$, and by $\Mol_r(\cR^n)$ the subset consisting of mollifiers with support contained in the closed ball $\overline{B_r(0)}$. 

A sequence of mollifiers $\left\{ {m_k} 
\right\}$ in $\cR^n$ is called an {\tmem{regularizing sequence}} if $m_k \in
\Mol_{r_k}(\cR^n)$ with $r_k \rightarrow 0$ as $k \rightarrow \infty$.

We recall the following general result (see e.g.~\cite{HormanderI}, section 1.3):
\begin{Theorem}\label{rem-conv}Given a mollifier $m$, the convolution operator
  \[ (m \ast f) (x) = \int_{\cR^n} f (x - u) m (u) d u \]
  defines a linear map from $L^1_{\tmop{loc}} (\cR^n)$ to $C^{\infty}
  (\cR^n)$. If $\{ m_k \}$ is a regularizing sequence, then for each
  compact set $K$, the restriction of $m_k \ast f$ to $K$ converges to $f$ in
  the $L^1$-norm. Moreover, if $K \subset \cR^n \setminus
  \tmop{singsupp} (f)$ then $m_k \ast f$ converges to $f$ in the $C^{\infty}$-topology.
\end{Theorem}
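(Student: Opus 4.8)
The plan is to prove the three assertions separately, since they rest on different standard mechanisms: the smoothness of $m\ast f$ comes from differentiation under the integral sign, whereas both convergence statements are instances of the \emph{approximate identity} principle — the $L^1$ statement being the genuinely measure-theoretic one, and the $C^\infty$ statement reducing to it via the smoothness of $f$ on the relevant region. Linearity is immediate from the linearity of the integral and needs no comment.

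First I would settle smoothness. Fix a bounded open set $U$ and, after the change of variables $y = x-u$, write
\[
(m\ast f)(x) = \int_{\cR^n} f(y)\, m(x-y)\, dy.
\]
Since $m$ has compact support, say in $\overline{B_R(0)}$, for $x\in\overline{U}$ the integrand vanishes unless $y$ lies in the fixed compact set $L = \overline{U}+\overline{B_R(0)}$, on which $f$ is integrable because $f\in L^1_{\mathrm{loc}}$. As $m$ is smooth, the derivatives $\partial_x^\alpha m(x-y)$ are continuous and uniformly bounded on $\overline{U}\times L$, so the standard theorem on differentiation of parameter integrals (dominated convergence applied to difference quotients) permits differentiation under the integral to arbitrary order, yielding $\partial^\alpha(m\ast f) = (\partial^\alpha m)\ast f \in C^0(\cR^n)$ and hence $m\ast f\in C^\infty(\cR^n)$.

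Next, for the $L^1$ convergence, I would use $\int m_k = 1$ to write $(m_k\ast f)(x)-f(x) = \int[f(x-u)-f(x)]\,m_k(u)\,du$, fix a compact $K$, enlarge it to $K' = K+\overline{B_1(0)}$ so that $f\in L^1(K')$, and estimate by Tonelli
\[
\|m_k\ast f - f\|_{L^1(K)} \le \int m_k(u)\,\|\tau_u f - f\|_{L^1(K)}\,du,
\]
where $\tau_u f = f(\cdot-u)$. The key input — and the main obstacle — is the continuity of translation in $L^1$: the map $u\mapsto \tau_u f$ is continuous at $u=0$ in $L^1(K)$. This is the one genuinely non-elementary ingredient; it is itself proved by approximating $f$ in $L^1(K')$ by a compactly supported continuous function (for which continuity of translation is clear by uniform continuity) and then a density argument. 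Granting it, since $m_k$ is supported in $\overline{B_{r_k}(0)}$ with $r_k\to 0$ and $\int m_k = 1$, the right-hand side is bounded by $\sup_{|u|\le r_k}\|\tau_u f - f\|_{L^1(K)}\to 0$.

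Finally, for the $C^\infty$ convergence on a compact $K\subset\cR^n\setminus\operatorname{singsupp}(f)$, I would exploit that $f$ is smooth on an open neighborhood $U$ of $K$. For $k$ large enough that $r_k < \operatorname{dist}(K,\partial U)$, every $x\in K$ satisfies $x-u\in U$ for $|u|\le r_k$, so the differentiation falls directly on $f$:
\[
\partial^\alpha(m_k\ast f)(x) = \int (\partial^\alpha f)(x-u)\, m_k(u)\, du = (m_k\ast \partial^\alpha f)(x).
\]
Each $\partial^\alpha f$ is uniformly continuous on a compact neighborhood of $K$, so the elementary approximate-identity estimate $|(m_k\ast\partial^\alpha f)(x)-\partial^\alpha f(x)| \le \sup_{|u|\le r_k}|\partial^\alpha f(x-u)-\partial^\alpha f(x)|$ tends to $0$ uniformly in $x\in K$. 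Hence every derivative of $m_k\ast f$ converges uniformly on $K$ to the corresponding derivative of $f$, which is exactly convergence in the $C^\infty$-topology defined by the seminorms $\|\cdot\|_{K,m}$. The only delicate point in the whole argument is the $L^1$-continuity of translation underlying the second part; everything else is bookkeeping with compact supports and uniform continuity.
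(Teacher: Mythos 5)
Your proof is correct. The paper does not actually prove this statement---it is quoted as a classical result with a reference to H\"ormander, Section 1.3---and your argument is precisely the standard one found there: differentiation under the integral sign (after moving the mollifier onto the $x$-dependent slot) for smoothness, the approximate-identity estimate combined with $L^1$-continuity of translation for the $L^1$ part, and the observation that on a compact set away from the singular support the derivatives fall on $f$ itself, so uniform continuity of each $\partial^\alpha f$ gives convergence in every seminorm $\|\cdot\|_{K,m}$. All the compact-support bookkeeping (enlarging $K$ to $K'$, requiring $r_k<\operatorname{dist}(K,\partial U)$) is handled correctly, so there is nothing to add.
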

We now consider a particular 
convolution-based regularization. This construction yields a one-parameter family 
of smooth functions, which may be interpreted as an \emph{unfolding} of the original 
discontinuous object.

Given a mollifier $m \in \Mol(\cR^n)$, we define, for each $\eps >0$, the {\em $\eps$-rescaled mollifier} by
\begin{equation}\label{rescaled-mollifier}
m_\eps(x) = \frac{1}{\eps^n}\, m\left(\frac{x}{\eps}\right)
\end{equation}
As a consequence, given an open set $M \subset \cR^n$ equipped with a smooth normal crossings variety $\Sigma$, and a function $\vf\in C^\infty(M,\Sigma)$, the expression
$$
m_\eps \ast \vf,
$$
defines a one-parameter family of smooth functions on $M$, depending smoothly on $\eps > 0$. More explicitly, a simple coordinate change allows to write the convolution integral as
\begin{equation}\label{expr-f_k}
m_\eps \ast \vf(x)= \int_{\cR^n} f (x - \eps t)\, m (t) d t
\end{equation}
We now complete this family to $\{\eps = 0\}$ as follows:
\begin{Definition}\label{def-regfconv}
The {\em regularization by convolution }of $\vf \in C^\infty(M,\Sigma)$ (with mollifier $m$) is the function defined by
$\vf^{\rg}(x,\eps) = m_\eps \ast \vf(x)$ for $\eps >0$ and $\vf^\rg(x,0) = \vf(x)$.
\end{Definition}

Note that $\vf^\rg$ defines a piecewise-smooth function in the product $N = M \times (\cR_{\ge 0},0)$, with discontinuity locus $\Sigma \times \{0\}$ (which we will still note $\Sigma$). Note also that $M=N \cap \{\eps = 0\}$ is the boundary of $N$. 

Based on this discussion, we define a {\tmem{piecewise smooth regularized space}}  as a triple $(N, M, \Sigma)$
formed by the following objects:
\begin{enumeratenumeric}
  \item A manifold with corners $N$ of dimension $n + 1$.
  \item A codimension one boundary component $M \subset \partial N$, called
  the {\tmem{initial manifold}}.
  \item A smooth normal crossings submanifold $\Sigma \subset M$, such that $(M,
  \Sigma)$ is a piecewise smooth space. 
\end{enumeratenumeric}
We denote by $C^\infty(N,M,\Sigma)=C^\infty(N,\Sigma)$ the space of piecewise smooth functions in $N$ with discontinuity locus $\Sigma \subset M$. 

We can summarize the construction described in this subsection as follows: Assume that $(M,\Sigma)$ is a piecewise continuous space such that $M$ is an open subset of $\cR^n$. Then, given mollifier $m\in \Mol(\cR^n)$, the regularization by convolution defines a linear operator
\begin{align*}\label{regularization-operator}
\reg_m \colon C^\infty(M,\Sigma) &\longrightarrow C^{\infty}(N,M,\Sigma) \\
\vf &\longmapsto \vf^\rg
\end{align*}
where $(N,M,\Sigma)$ is the piecewise smooth regularized space with ambient $N=M\times (\cR_{\ge 0},0)$ and initial manifold $M=N\times\{0\}$.  

We will say that $\reg_m$ is the the {\em regularization operator} associated to $m$ and we will denote by $C^{\infty,\reg}(N,M,\Sigma)$ the image of the operators $\reg_m$,  for all mollifiers $m$ varying in $\Mol(\cR^n)$.  
\begin{Remark}\label{remark-dependence-base-point}
(1) Note that the convolution integral  \( f_\varepsilon = m_\varepsilon \ast f \)
is written in terms of the global Euclidean coordinates of \( \mathbb{R}^n \).
For our purposes, it will be necessary to express this integral in local adapted
coordinates, as in~\eqref{loc-adapted-coord}, which are defined in the vicinity of each point 
in discontinuity locus.

More precisely, let \((U,\varphi)\) be a local chart defining adapted coordinates, and let
\(\psi = \varphi^{-1}\) denote the inverse diffeomorphism, with domain on the open subset $V=\varphi(U)$ of $\cR^n$.  
The goal is to write the pull-back of the regularized function $f_\eps \circ \psi$ in terms of a convolution-type integral involving the pull-back of the piecewise smooth function $f\circ \psi$.  

By a change of variables in the convolution integral (\ref{expr-f_k}) we can write 
\begin{equation}\label{convolution-local-charts}
(f_\varepsilon \circ \psi)(x)
= \int_{\mathbb{R}^n} (f \circ \psi)(x - t)\,
\tmmathbf{m}_{\varepsilon,x}(t)\, dt .
\end{equation}
Here \(t \mapsto \tmmathbf{m}_{\varepsilon,x}(t)\) is a family of functions depending smoothly on $\eps,x$, with domain some neighborhood of the origin in
\(\mathbb{R}^n\).  More precisely, we can write
\[
\tmmathbf{m}_{\varepsilon,x}(t)
= \frac{1}{\varepsilon^n}\, u(x,t)\,
m\!\left(\frac{t\, v(x,t)}{\varepsilon}\right),
\]
where \(m\) is the original mollifier appearing in \eqref{rescaled-mollifier} and \(u,v\) are smooth functions on
some open neighborhood of the origin in \(V \times \mathbb{R}^n\) that do not vanish at \((x,t) = (0,0)\).  

Note that, for each fixed value of \(x\), the support of the function
\(t \mapsto \mathbf{m}_{\varepsilon,x}\) shrinks to the origin as
\(\varepsilon \to 0\). Consequently, for \(\varepsilon > 0\) sufficiently
small, the above integral is well defined for all \(t \in \mathbb{R}^n\).

In other words, when expressed in the local adapted coordinates \(x\), the
regularized function \(f_\varepsilon \circ \psi\) can be written as the
convolution of \(f \circ \psi\) with a \emph{generalized mollifier}
\(\tmmathbf{m}_{\varepsilon,x}\), which, in these coordinates, depends both on
the integration variable \(t \in \mathbb{R}^n\) and on the base point \(x\).

For the computations that follow, this additional dependence of the mollifier
\(\tmmathbf{m}_{\varepsilon,x}\) on the base point is entirely harmless.
Accordingly, we shall omit it from the notation.

(2) The regularization by convolution can be defined more generally for an arbitrary Riemannian manifold $M$ through the {\em Greene-Wu convolution formula}
\[ f_{\varepsilon} (p) = \int_{v \in T_p M} f (\exp_p v)\, m_{\varepsilon} (v) d
   v \]
where $\exp_p:T_p M\rightarrow M$ denotes the exponential map associated to the metric and the integral is computed with respect to the Lebesgue measure on $T_p M$ induced by the metric. In this more general setting, the Greene–Wu convolution formula expressed in local charts has the same form as~\eqref{convolution-local-charts}. We refer the reader to~\cite{GreeneWu1979} for further details.
\end{Remark}
\section{Smoothing $m_\eps \ast f$ by a sequence of blowing-ups}\label{s5}
In this section we shall prove a first Smoothing Theorem for piecewise smooth functions, which states that the regularized function $f^\reg = \reg_m(f)$ can be smoothed by a finite sequence of {\em blowing-ups} in the ambient space $(N,M,\Sigma)$.
\subsection{Blowing-up map}\label{subsect-blow-up}
Let us briefly recall the blowing-up construction. Given natural numbers $1\le k \le n$, the blowing-up of $N=\cR^n$ with center on the subspace $C = \cR^k \times \{0\}$ is given by a surjective proper analytic map $\Phi: \tilde N \rightarrow N$, where 
\[
\tilde N = \cR_{\ge 0} \times \cS^{k-1} \times \cR^{n}
\]
is a manifold (with boundary) and the mapping is defined as follows: If we consider the coordinates $r\in \cR_{\ge 0}$, $(\bar y_1,\cdots,\bar y_k) \in \cS^{k-1}$ and $(y_{n-k+1},..,y_n) \in \cR^{n-k}$ then $\Phi$ is given by the equations
\[
x_i = r \bar y_i,\;\; i=1,..,k,\qquad x_j = y_j,\;\; j=n-k+1,..,n
\]
We say that $\tilde N$ is the {\em blowed-up space} and that $D$ is the {\em exceptional divisor} of the blowing-up. Note that $\Phi$ maps the codimension one manifold $D = \{r = 0\} = \cS^{k-1} \times \cR^{n-k}$ (which is the boundary of $\tilde N$) onto $C$. 
Note also that the blow-up restricts to a diffeomorphism between $\tilde N \setminus D$ to $N \setminus C$.  

The blowing-up construction can be iterated using local coordinates. Each blow-up eventually produces a new exceptional divisor (that is, a new boundary component). This naturally places the discussion in the framework of smooth manifolds with corners, where one defines the general operation of blowing-up with center on smooth submanifolds. We refer the reader to \cite{Melrose}, chapter 5, for
the detailed definitions.
\subsection{Directional blowing-ups in piecewise smooth spaces}\label{subsect-local-situation-I}
As a preliminary step toward the proof of the Smoothing Theorem, we need to analyze the effect of a sequence of directional blowings-up on a piecewise smooth space $(M, \Sigma)$.

We suppose that $\Sigma$ is the
union of $m \geqslant 1$ connected codimension-one submanifolds (in normal
crossings position) and fix, once and for all, an enumeration of such
manifolds,
\[ \{ \Sigma_i \}_{i \in \llbracket 1, m \rrbracket} \]
For each subset $I \subset \llbracket 1, m \rrbracket$ we define the
{\tmem{stratum}} $\Sigma_I \subset \Sigma$ as the subset given by intersection of the corresponding hyperplanes
\[ \Sigma_I = \bigcap_{i \in I} \Sigma_i \]
By the normal crossings property, each $\Sigma_I$ is a closed submanifold of
dimension $n -\#I$, where $\#I$ denotes the cardinality of $I$. Note that
$\Sigma_I$ is empty whenever $\#I > n$ (since no more that $n$ manifolds can
intersect transversally at a point). Therefore, all strata are indexed by
subsets $I$ of cardinality $\leqslant n$.

At each point $p \in \Sigma_I$ there is a local chart $(V, x)$, with the
coordinates $x = (x_I, x')$, labeled $x_I = (x_i : i \in I)$ and $x' \in
\cR^{n -\#I}$, such that we can write
\[ \Sigma \cap V = \Sigma_I  = \left\{ \prod_{i \in I} x_i = 0 \right\}
\]
We will refer to each one of these charts as {\tmem{a local adapted chart}}
(for $(M, \Sigma)$) {\tmem{centered}} at $\Sigma_I$.

\begin{Remark}
  Consider a piecewise smooth function $f \in C^{\infty} (M, \Sigma)$. The
  restriction of $f$ to the domain of an adapted chart $(V, x)$ centered at
  $\Sigma_I$ defines a function in $C^{\infty} (V, \Sigma_I \cap V)$. More
  precisely, in the coordinates of the chart, we can write the local expansion
  \begin{equation}
    f_I = \sum_{\underline{s} \in \{ +, - \}^I} \tmmathbf{1}_{\left\{
    \underline{s}  \underline{x} > 0 \right\}} f_{\underline{s}}
    \label{definition-fI}
  \end{equation}
  where $f_s$ are smooth functions and we use the notation $\left\{
  \underline{s}  \underline{x} > 0 \right\}$ to indicate the subset of
  $\cR^n$ defined by the inequalities $\{ s (i) x_i > 0 : i \in I \} .$
  Therefore, in these coordinates, we can interpret $f_I$ as a piecewise
  smooth function, {\tmem{discontinuous}} in the variables $x_I = (x_i)_{i \in
  I}$ and depending smoothly on the remaining variables $x'$. 
\end{Remark}

For each index $i_1 \in I$, we will denote by $B_{(I, i_1)}$ the
{\tmem{$i_1^{\tmop{th}}$-directional blowing-up map}}, given by
\[
\begin{cases}
     x_i = z_{i_1} z_i, & i \in I \setminus \{ i_1 \},\\[2pt]
     x_{i_1} = \, z_{i_1},& z_{i_1} \in \cR_{\ge 0}\\[2pt] 
     x' = z'.
\end{cases}
\]
Note that, geometrically, this map corresponds to one of the charts of the blowing-up of the ambient space with center $\Sigma_I$. Our next goal is to describe the behaviour of a piecewise function $f$ having an
expansion as (\ref{definition-fI}) under such directional blowing-up.

\noindent{\bf Convention.} To cover the entire blown-up space by directional charts, we must also consider the $(-i_1)^{th}$–directional blowing-up map
\[
\begin{cases}
     x_i = z_{i_1} z_i, & i \in I \setminus \{ i_1 \},\\[2pt]
     x_{i_1} = -\, z_{i_1},,& z_{i_1} \in \cR_{\ge 0}\\[2pt]
     x' = z'.
\end{cases}
\]
All the results that we will prove using $i_1^{th}$–directional chart have completely analogous proofs in the $(-i_1)^{th}$–directional charts.  
However, including both families of charts simultaneously would make the notation unnecessarily heavy.  
Therefore, to simplify both notation and exposition, we will systematically restrict our attention to the positive directional charts in the statements that follow.

Based on this convention, we associate to $f_I$ and to the index $i_1\in I$ a new piecewise continuous function $f_{I\setminus \{i_1\}}$ given by
\begin{equation}\label{definition-fI-i1}
f_{I\setminus \{i_1\}} = \sum_{\underline{\tilde s} \in \{ +, - \}^{I\setminus {i_1}}} \tmmathbf{1}_{\left\{ \underline{s} 
     \underline{x} > 0 \right\}} f_{{\underline{s}}}
\end{equation}
where the sum is now taken over all sign functions with domain $I \setminus \{i_1\}$, and we denote by $\underline{{s}} \in \{ +, - \}^I$ the sign
  function which extends $\underline{\tilde s}$ by defining $\underline{s} (i_1) = +$. 
  
 Note that the function $f_{I\setminus \{i_1\}}$ has discontinuity locus given by the closure of $\Sigma \setminus \{x_{i_1} = 0\}$. 
  Intuitively, $f_{I\setminus \{i_1\}}$ is obtained from $f_I$ by {\em removing} the $\{x_{i_1} = 0\}$-component from the discontinuity locus. 
  \begin{Example}
  Suppose that $n=2$ and $I=\{1,2\}$. Then, in adapted coordinates we can write $\Sigma = \{x_1 x_2 = 0\} \subset \cR^2$ and a function $f_I \in C^\infty(\cR^2,\Sigma)$ is given by a sum
{\small
 $$
 \tmmathbf{1}_{\{x_1 > 0, x_2 > 0\}} f_{(+,+)} + \tmmathbf{1}_{\{x_1 < 0, x_2 > 0\}} f_{(-,+)} + \tmmathbf{1}_{\{x_1 > 0, x_2 < 0\}} f_{(-,+)} + \tmmathbf{1}_{\{x_1 < 0, x_2 < 0\}} f_{(-,-)} 
$$
}
  where each $f_{(\cdot,\cdot)}$ is a smooth functions in $\cR^2$. If we choose the index $i_1 = 1$ then we obtain
  $$
  f_{I\setminus\{1\}} = \tmmathbf{1}_{\{x_1 > 0, x_2 > 0\}} f_{(+,+)} + \tmmathbf{1}_{\{x_1 > 0, x_2 < 0\}} f_{(-,+)}
  $$
  which is a piecewise smooth function in $C^\infty(\cR^2,\{x_2 = 0\})$.
\begin{figure}[htb]
\centering
{\includegraphics[width=8.75896136691591cm,height=4.47006591892956cm]{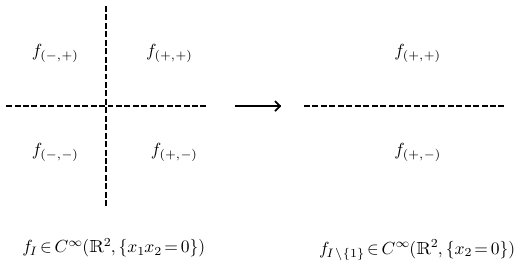}}
\end{figure}
  \end{Example}

\begin{Lemma}
  \label{Lemma-composedB1}Suppose that $f \in C^{\infty}(\cR^n,
  \Sigma_I)$. Then the composed function $f_I \circ B_{I, i_1}$ defines an element
  of $C^{\infty} (W, \Sigma_{I \setminus \{ i_1 \}})$, where $W = B_{(I,
  i_1)}^{- 1} (\cR^n)$. Moreover, we have the equality
  \[ f_I \circ B_{(I, i_1)} = f_{I \setminus \{ i_1 \}} \circ B_{(I, i_1)} \]
\end{Lemma}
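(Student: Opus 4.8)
The plan is to work entirely in the positive $i_1$-th directional chart and to track how the local presentation (\ref{definition-fI}) transforms under $B := B_{(I,i_1)}$. The starting observation is that $B$ is a polynomial map with $x_{i_1} = z_{i_1}\ge 0$ and $x_i = z_{i_1}z_i$ for $i \in I\setminus\{i_1\}$, so that each composition $f_{\underline{s}}\circ B$ is \emph{automatically} smooth on all of $W$ — here I use that the extension property for $f$ guarantees the local components $f_{\underline{s}}$ are globally smooth. Hence the entire problem reduces to understanding the pulled-back characteristic functions $\tmmathbf{1}_{\{\underline{s}\underline{x}>0\}}\circ B$ away from the exceptional divisor $D=\{z_{i_1}=0\}$, where $B$ is a diffeomorphism onto $\cR^n\setminus\Sigma_I$.

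First I would carry out the sign analysis on $W\setminus D$, where $z_{i_1}>0$. There the condition $s(i_1)x_{i_1}>0$ becomes $s(i_1)z_{i_1}>0$, which holds precisely when $s(i_1)=+$ and fails identically when $s(i_1)=-$; and for $i\ne i_1$ the condition $s(i)x_i>0$ becomes $s(i)z_{i_1}z_i>0$, equivalent to $s(i)z_i>0$ since $z_{i_1}>0$. Consequently every term of (\ref{definition-fI}) with $s(i_1)=-$ pulls back to zero on $W\setminus D$, while a term with $s(i_1)=+$ pulls back to $\tmmathbf{1}_{\{s(i)z_i>0:\,i\ne i_1\}}\,(f_{\underline{s}}\circ B)$. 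Summing, I obtain on $W\setminus D$
\[
f_I\circ B \;=\; \sum_{\underline{s}:\,s(i_1)=+}\tmmathbf{1}_{\{s(i)z_i>0:\,i\ne i_1\}}\,(f_{\underline{s}}\circ B),
\]
which is exactly the expansion obtained by substituting $B$ into (\ref{definition-fI-i1}) and performing the same reduction, the extra condition $z_{i_1}>0$ in $f_{I\setminus\{i_1\}}$ being automatic on $W\setminus D$. This single computation simultaneously delivers the claimed equality $f_I\circ B = f_{I\setminus\{i_1\}}\circ B$ and exhibits $f_I\circ B$ as a sum of globally smooth functions weighted by characteristic functions of the sign regions of the variables $(z_i)_{i\ne i_1}$, whose common boundary is precisely $\Sigma_{I\setminus\{i_1\}}=\{\prod_{i\ne i_1}z_i=0\}$.

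To finish I must verify the extension property defining membership in $C^{\infty}(W,\Sigma_{I\setminus\{i_1\}})$: on each connected component of $W\setminus\Sigma_{I\setminus\{i_1\}}$ the function must extend smoothly to all of $W$. Each such component is contained in a single sign region $\{s(i)z_i>0:\,i\ne i_1\}$, on which $f_I\circ B$ coincides with the one globally smooth function $f_{\underline{s}}\circ B$, which supplies the required smooth extension. The hard part — and the genuine content of the lemma — is exactly the behaviour along $D$: since $B$ collapses $D$ onto the center $\Sigma_I$, the composition $f_I\circ B$ is \emph{a priori undefined} on $D$, and one must show it nonetheless extends smoothly across $D$ (away from $\Sigma_{I\setminus\{i_1\}}$) rather than acquiring a new discontinuity there. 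This is precisely what the previous paragraph gives: within each sign region the function is governed by a single smooth component $f_{\underline{s}}\circ B$, so the limit as $z_{i_1}\to 0^+$ is smooth and $D$ contributes nothing to the discontinuity locus. Finally, the equality of the two sides as elements of $C^{\infty}(W,\Sigma_{I\setminus\{i_1\}})$ follows because it holds on the dense open set $W\setminus(D\cup\Sigma_{I\setminus\{i_1\}})$ and, as always for piecewise-smooth functions, one compares only the smooth restrictions to each component — so the spurious value $0$ that the characteristic functions formally take along $D$ is irrelevant.
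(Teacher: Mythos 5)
Your argument is correct and follows essentially the same route as the paper: expand $f_I$ in its local presentation, use $z_{i_1}\ge 0$ in the directional chart to discard the terms with $s(i_1)=-$, divide the factor $z_{i_1}$ out of the remaining inequalities to identify the result with $f_{I\setminus\{i_1\}}\circ B_{(I,i_1)}$, and read off the smooth extension across the divisor from the single component $f_{\underline{s}}\circ B$ governing each sign region. Your treatment of the extension property along $D$ is a bit more explicit than the paper's, but the content is identical.
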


\begin{Remark}
   In other words, this result says that the $B_{(I,i_1)}$-directional blowing up gives exactly the same results when applied to either $f_I$ or  $f_{I \setminus \{ i_1 \}}$. Intuitively, the $i_1$-component of the discontinuity
  locus $\Sigma$ is {\tmem{removed}} by the $i^{\tmop{th}}$ directional
  blowing-up.
\end{Remark}

\begin{proof}
We define the index set $J = I
  \setminus \{ 1 \}$ and denote by $g = f_I \circ
  B_{(I, i_1)}$ the transformed function. Using the above expression for the blowing-up and the
  expansion (\ref{definition-fI}) for $f_I$, we can write
  \begin{align*}
g(z) &= 
\tmmathbf{1}_{\{ z_{i_1} > 0 \}}
\Biggl(
  \sum_{\underline{\tilde s}}
  \tmmathbf{1}_{\{ \underline{s x} \circ B > 0 \}}
  \, f_{\underline{s}} \circ B
\Biggr) \\[4pt]
&\quad + 
\tmmathbf{1}_{\{ z_{i_1} < 0 \}}
\Biggl(
  \sum_{\underline{\tilde s}}
  \tmmathbf{1}_{\{ \underline{s x} \circ B > 0 \}}
  \, f_{{\underline{s}}} \circ B
\Biggr).
\end{align*}
 where, the sum is taken over all sign functions $\underline{\tilde{s}}\in \{ +, - \}^J$ with domain $J$ and $s\in \{ +, - \}^I$ denotes the unique sign
  function which extends $\underline{\tilde{s}}$ by defining
  $s(i_1) = +$.
  
  We now observe that, by the definition of the directional chart, we have $z_{i_1}\geqslant 0$.
  Therefore, only the first sum in the above expansion in non-vanishing.
 Also, the expression $\underline{s x} \circ B > 0$ appearing in the sum
  corresponds to the collection of inequalities
  \begin{equation}
    \{ s (i)\, z_1\, z_i > 0 \mid i \in J \} \label{equalities-si}
  \end{equation}
  Therefore, when restricted to the domain $W\setminus D = \{ z_1 > 0 \}$, the above
  expression defines an element $g \in C^{\infty} (W \setminus D, \Sigma_J)$. The
  extension to an element $\overline{g}$ in $C^{\infty} (W, \Sigma_J)$ is 
  defined simply by removing the $z_1$-factor in the inequalities
  (\ref{equalities-si}), namely by defining
  \[ \overline{g} (z) = \sum_{\underline{\tilde s} \in \{ +, - \}^J} \tmmathbf{1}_{\left\{ \underline{\tilde s} 
     \underline{z} > 0 \right\}} f_{{\underline{s}}} \circ B_{I,i_1} (z) \]
 Notice that such expression of $\overline{g}$ coincides with the expression we would obtain by considering the function $f_{I \setminus \{ i_1 \}} \circ B_{(I, i_1)}$ instead of $f_{I} \circ B_{(I, i_1)}$.   Therefore, the last equality in the enunciate proved.
\end{proof}

More generally, consider an index set $I \subset \{ 1, \ldots n \}$ and a
sequence of directional blowing-up maps $B_{I_0, i_1}, B_{I_1, i_2}, \ldots,
B_{I_{k - 1}, i_k}$ defined as follows:
\begin{enumerate}
  \item $I_0 = I$ and $i_1 \in I_0$,
  \item For $j = 1, \ldots, k - 1$: \ $I_j = I_{j - 1} \setminus \{ i_j \}$
  and $i_{j + 1} \in I_j$.
\end{enumerate}
We write $\underline{i} = (i_1, \ldots, i_k)$ and denote by $I \setminus
\underline{i}$ the index set $I \setminus \{i_1, \ldots, i_k \}$. The map
\begin{equation}
  B_{\left( I, \underline{i} \right)} \hspace{0.27em} = \hspace{0.27em}
  B_{(I_0, i_1)} \circ \cdots \circ B_{(I_{k - 1}, i_k)}
  \label{composed-blowupdef}
\end{equation}
obtained by composition of the corresponding directional blowings-up will be called the {\tmem{$\left( I, \underline{i} \right)$-composed directional
blowing-up}}. We convention that $B_{\left( I, \underline{i} \right)}$ is the
identity if either $I = \emptyset$ or $k = 0$.

Given a function $f_I \in C^{\infty} (\cR^n, \Sigma_I)$, we also define inductively
$$
f_{I \setminus
     \underline{i}} = (f_{I\setminus \{i_1\}})_{\setminus \{i_2\}})_{\cdots})_{\setminus \{i_k\}}
$$
where $f_{I\setminus \{i\}}$ is given by (\ref{definition-fI-i1}). In other words, $f_{I \setminus
     \underline{i}}$ is obtained from $f_{I \setminus
     \underline{i}}$ by successively removing the hyperplanes $\{x_{i_1}=0\},\ldots,\{x_{i_k}=0\}$ from the discontinuity locus. As a consequence, 
     $f_{I\setminus \{i\}}$ is an element of $C^\infty(\cR^n,\Sigma_{I\setminus \underline{i}})$, where 
$$
\Sigma_{I\setminus \underline{i}} = \left\{ \prod_{i \in I\setminus \underline{i}} x_i = 0 \right\}
$$
Now, a simple inductive application of Lemma \ref{Lemma-composedB1} leads to the
following result.
\begin{Lemma}
  \label{Lemma-composedB}Let $f_I \in C^{\infty} (\cR^n, \Sigma_I)$. \
  Then, the composed function $f_I \circ B_{\left( I, \underline{i} \right)}$
  defines an element of $C^{\infty} \left( \cR^n, \Sigma_{I \setminus
  \underline{i}} \right)$. Moreover, we have the identity
  \[ f_I \circ B_{\left( I, \underline{i} \right)} = f_{I \setminus
     \underline{i}} \circ B_{\left( I, \underline{i} \right)} \]
\end{Lemma}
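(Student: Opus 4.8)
The plan is to induct on the length $k$ of the sequence $\underline{i} = (i_1,\ldots,i_k)$, using Lemma~\ref{Lemma-composedB1} as the single-step building block. For $k=0$ the map $B_{(I,\underline{i})}$ is the identity and $f_{I\setminus\underline{i}} = f_I$, so both claims are immediate, while the case $k=1$ is exactly Lemma~\ref{Lemma-composedB1}.

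For the inductive mechanism I would introduce the partial compositions
\[
g_j \;=\; f_I \circ B_{(I_0,i_1)} \circ \cdots \circ B_{(I_{j-1},i_j)}, \qquad j = 0,1,\ldots,k,
\]
so that $g_0 = f_I$, $g_k = f_I \circ B_{(I,\underline{i})}$, and $g_j = g_{j-1}\circ B_{(I_{j-1},i_j)}$. The inductive statement to carry is that each $g_j$ belongs to $C^\infty(\cR^n,\Sigma_{I_j})$ and satisfies $g_j = f_{I\setminus(i_1,\ldots,i_j)} \circ B_{(I_0,i_1)}\circ\cdots\circ B_{(I_{j-1},i_j)}$. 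Granting this for $j-1$, the function $g_{j-1}$ is a genuine piecewise smooth function with discontinuity locus $\Sigma_{I_{j-1}}$ and $i_j \in I_{j-1}$, so Lemma~\ref{Lemma-composedB1} applies verbatim to $g_{j-1}$ with the index $i_j$. This delivers in one stroke that $g_j = g_{j-1}\circ B_{(I_{j-1},i_j)}$ lies in $C^\infty(\cR^n,\Sigma_{I_{j-1}\setminus\{i_j\}}) = C^\infty(\cR^n,\Sigma_{I_j})$ and that it equals $(g_{j-1})_{I_{j-1}\setminus\{i_j\}}\circ B_{(I_{j-1},i_j)}$; the passage from the open directional chart to its closure is precisely the extension already carried out inside the proof of Lemma~\ref{Lemma-composedB1}.

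The one step that needs genuine care, and the place I expect the main obstacle to lie, is matching $(g_{j-1})_{I_{j-1}\setminus\{i_j\}}$ with $f_{I\setminus(i_1,\ldots,i_j)}$, i.e.~checking that the removal of local components is associative and compatible with the accumulated blowing-up. Here the decisive point is the sign bookkeeping underlying Lemma~\ref{Lemma-composedB1}: each directional blowing-up $B_{(I_{j-1},i_j)}$ introduces a non-negative coordinate in the $i_j$-direction, forcing the corresponding sign to be $+$, so that only the local components $f_{\underline{s}}$ with $\underline{s}(i_j)=+$ survive (exactly as in Lemma~\ref{Lemma-composedB1}). Carried through all $j$ steps, the pullback of each $x_{i_1},\ldots,x_{i_k}$ under $B_{(I,\underline{i})}$ is a monomial in the non-negative blown-up coordinates, hence the only sign functions $\underline{s}\in\{+,-\}^I$ contributing to $g_k$ are those with $\underline{s}(i_1)=\cdots=\underline{s}(i_k)=+$, arranged according to the signs of the surviving coordinates $z_i$ with $i\in I\setminus\underline{i}$. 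These are exactly the components retained by the inductive definition of $f_{I\setminus\underline{i}}$ --- obtained by successively removing the hyperplanes $\{x_{i_1}=0\},\ldots,\{x_{i_k}=0\}$ from the discontinuity locus according to~\eqref{definition-fI-i1} --- which identifies the two local presentations and closes the induction; taking $j=k$ then yields both assertions of the lemma.
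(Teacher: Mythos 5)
Your proposal is correct and follows exactly the route the paper takes: the paper dispatches Lemma~\ref{Lemma-composedB} with the single sentence that it follows from ``a simple inductive application of Lemma \ref{Lemma-composedB1}'', and your induction on the length of $\underline{i}$ via the partial compositions $g_j$, together with the sign bookkeeping identifying $(g_{j-1})_{I_{j-1}\setminus\{i_j\}}$ with $f_{I\setminus(i_1,\ldots,i_j)}$, is precisely the argument being invoked. You have simply written out the details the paper leaves implicit.
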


\begin{Remark}
  \label{Remark-exprB-nonreg}For future reference, we observe that the composed blowing-up map
  $B_{\left( I, \underline{i} \right)}$ has the following explicit monomial
  expression
\[
\begin{cases}
x_{i_1} = z_{i_1}, \\[4pt]
x_{i_2} = z_{i_1} z_{i_2}, \\[4pt]
\vdots \\[4pt]
x_{i_{k-1}} = z_{i_1} \cdots z_{i_{k-1}}, \\[4pt]
x_{i_k} = z_{i_1} \cdots z_{i_k}, \\[4pt]
x_i = (z_{i_1} \cdots z_{i_k})\, z_i, \quad i \in I \setminus \underline{i}, \\[4pt]
x' = z', \hspace{2.3cm} x', z' \in \cR^{n - \#I}.
\end{cases}
\]
where $z_{i_1},\ldots,z_{i_k}$ are variables in $\cR_{\ge 0}$.
\end{Remark}

\subsection{Blowing-up in piecewise smooth regularized spaces}\label{subsect-local-situation-reg-I}
We now consider the effect of a sequence of phase directional blowings-up when applied to a regularized function $f^\rg \in C^{\infty,\rg}(N,M,\Sigma)$, where $(N, M , \Sigma)$ is a piecewise smooth regularized space. 

As previously, note that we can write $\Sigma \subset M$ as the union of $c$
(for some $c \geqslant 1$) connected submanifolds of codimension one
(contained in $M$). We fix, once and for all, an enumeration
\[ \{ \Sigma_i \}_{i \in \llbracket 1,  c\rrbracket} \]
of such manifolds. For each subset $I \subset \llbracket 1, c \rrbracket$ we
define the {\tmem{stratum}} $\Sigma_I \subset \Sigma \subset M$ as the subset
given by intersection
\[ \Sigma_I = \bigcap_{i \in I} \Sigma_i \]
As above, note that all strata are indexed by subsets $I$ of cardinality
$\leqslant n$.

At each point $p \in \Sigma_I$ there is a local chart $\left( V, \left( x,
\eps \right) \right)$, with the coordinates $x = (x_I, x')$, labeled $x_I =
(x_i : i \in I)$ and $x' \in \cR^{n -\#I}$, such that we can write
\[ M = \left\{ (x,\eps) \mid \eps = 0 \right\} \quad \tmop{and} \quad \Sigma \cap V
   = \Sigma_I  = \left\{ \eps = 0, \prod_{i \in I} x_i = 0 \right\}  \]
We will refer to each one of these charts as {\tmem{a local adapted chart}}
(for $(N, M, \Sigma)$) {\tmem{centered}} at $\Sigma_I$.
Let us denote by $(N_I,M_I,\Sigma_I)$ the restriction of the $(N,M,\Sigma)$ to this local coordinate chart.  

We consider the map $\phi:\tilde N \rightarrow N_I$ given by the blowing-up of with center 
$$C_I = \{x_i = 0 \mid i \in I\}$$
We will denote respectively by $\tilde M,\tilde \Sigma \subset \tilde N$ the subsets given by the {\em strict transforms} of $M_I,\Sigma_I$ under such blowing-up. More precisely, 
$$
\tilde M = \overline{\phi^{-1}(M_I\setminus C_I)},\quad \text{and} \quad \tilde \Sigma = \overline{\phi^{-1}(\Sigma_I \setminus C_I)}
$$
where $\overline S$ denotes the closure of a set $S$. Using the local expressions, we can prove that the triple $(\tilde N,\tilde M,\tilde \Sigma)$ defines a new piecewise smooth regularized space. Therefore, the blowing-up can be seen as a map
$$
\varphi: (\tilde N,\tilde M,\tilde \Sigma) \rightarrow (N_I,M_I,\Sigma_I)
$$
and we will say that $(\tilde N,\tilde M,\tilde \Sigma)$ is the (result of) {\em local blowing-up of $(N_I,M_I,\Sigma_I)$}, with center $C_I$. 

In the next two subsections we will study the action of such blowing-up map on a regularized function $f^\reg \in C^{\infty}(N_I,M_I,
  \Sigma_I)$, using the directional charts.  

\subsubsection{Phase-directional blowing-up}\label{subsect-phase-blowing-up}
For each index $i_1 \in I$, we will denote by $B_{(I, i_1)}$ the
{\tmem{$i_1^{\tmop{th}}$-directional blowing-up map}}, given by
\[
\begin{cases}
     x_i = z_{i_1} z_i, & i \in I \setminus \{ i_1 \},\\[2pt]
     x_{i_1} = \, z_{i_1},& z_{i_1} \in \cR_{\ge 0}\\[2pt] 
     \eps = \, z_{i_1}  \rho,& \rho \in \cR_{\ge 0}\\[2pt]
     x' = z'.
\end{cases}
\]

\begin{figure}[htb]
\centering
\includegraphics[width=9cm]{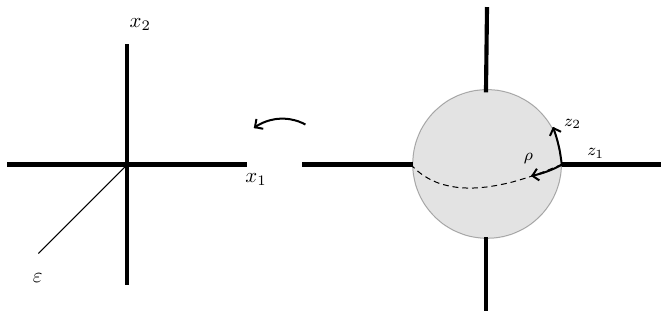}
\end{figure}

Here, we consider $(z,\rho)$ as local coordinates of a new piecewise smooth regularized space $(\tilde N,\tilde M,\tilde \Sigma)$, where 
$\tilde M = \{\rho = 0\}$ is the initial manifold and the discontinuity locus $\tilde \Sigma$ can be locally written as
$$\Sigma_{I \setminus \{ i_1 \}} 
= \left\{ \varepsilon = 0,\; \prod_{i \in I \setminus \{ i_1 \}} z_i = 0 \right\}.
$$
In this context, the following result is the analog of Lemma \ref{Lemma-composedB1} when $f_I$ is replaced by $f_I^\reg$:

\begin{Lemma}\label{lemma-+directional}
Suppose that $f^\reg \in C^{\infty}(N_I,M_I,
  \Sigma_I)$ is the regularization of a piecewise smooth function $f\in C^{\infty}(M_I,
  \Sigma_I)$. Then, in the coordinates of the $i^{th}$-directional blowing-up chart, there exists a neighborhood $W \subset \tilde{N}$ of the initial manifold $\tilde M$ (depending only on the support of the mollifier $m$) 
  such that the composed function $f_I^\reg \circ B_{(I, i_1)}$ in an element of the space
  of $C^{\infty} (W, \tilde{M},\Sigma_{I \setminus \{ i_1 \}})$. Moreover, we have the equality
$$
\vf_I^\reg \circ B_{(I,i_1)}= (\vf_{I\setminus\{i_1\}})^\reg \circ B_{(I,i_1)},
$$
when restricted to $W$.
\end{Lemma}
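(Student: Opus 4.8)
The plan is to follow the combinatorial argument of Lemma~\ref{Lemma-composedB1} essentially verbatim, now carrying the convolution integral through the directional blow-up, and then to treat the smoothness up to the exceptional divisor as a separate point. First I would insert the local presentation $f_I=\sum_{\underline{s}\in\{+,-\}^I}\tmmathbf{1}_{\{\underline{s}\underline{x}>0\}}f_{\underline{s}}$ into the convolution and write, for $\eps>0$,
\[
f_I^{\reg}(x,\eps)=\sum_{\underline{s}\in\{+,-\}^I}\ \int_{\{t\,:\,s(i)(x_i-\eps t_i)>0,\ i\in I\}} f_{\underline{s}}(x-\eps t)\,m(t)\,dt .
\]
Pulling this back by $B_{(I,i_1)}$, the key computation is the factorization of the inequalities defining each integration domain: the $i_1$-th one becomes $s(i_1)\,z_{i_1}(1-\rho\,t_{i_1})>0$, while for $i\in I\setminus\{i_1\}$ it becomes $s(i)\,z_{i_1}(z_i-\rho\,t_i)>0$. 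This is precisely the regularized analogue of the factorization exploited in Lemma~\ref{Lemma-composedB1}.

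Next I would use the compact support of $m$ to control the sign of the factor $1-\rho t_{i_1}$. Writing $\tmop{supp}(m)\subset\overline{B_R(0)}$, we have $|t_{i_1}|\le R$ on the support, so the neighborhood $W=\{0\le\rho<1/R\}$ of $\tilde M$ — which depends only on $\tmop{supp}(m)$ — guarantees $1-\rho t_{i_1}>0$ throughout the integration. Hence on $W$ with $z_{i_1}>0$ the $i_1$-th inequality reduces to $s(i_1)=+$: every summand with $s(i_1)=-$ has empty integration domain and drops out, and in the surviving summands the $i_1$-constraint is vacuous. Performing the same pull-back on $(f_{I\setminus\{i_1\}})^{\reg}$, whose defining sign functions already satisfy $s(i_1)=+$, the $i_1$-constraint is again vacuous on $W$; both pull-backs therefore collapse to the identical finite sum indexed by $\underline{\tilde s}\in\{+,-\}^{I\setminus\{i_1\}}$, which establishes the asserted equality $f_I^{\reg}\circ B_{(I,i_1)}=(f_{I\setminus\{i_1\}})^{\reg}\circ B_{(I,i_1)}$ on $W$. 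Intuitively, this is the statement that the directional blow-up removes the $\{x_{i_1}=0\}$-component exactly as in the non-regularized setting.

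It then remains to verify that this common pull-back is an element of $C^\infty(W,\tilde M,\Sigma_{I\setminus\{i_1\}})$. In the interior $\{\eps>0\}=\{z_{i_1}>0,\ \rho>0\}$ smoothness is immediate from the smoothness of the convolution (Theorem~\ref{rem-conv}), since there $\eps=z_{i_1}\rho>0$; the extension down to the initial manifold $\tilde M=\{\rho=0\}$ is the standard behaviour of a regularized family, and at $\rho=0$ the convolution argument becomes independent of $t$ while $\int m=1$ forces the boundary value to be $f_{I\setminus\{i_1\}}\circ B_{(I,i_1)}$, so that the discontinuity locus is exactly $\{\rho=0,\ \prod_{i\ne i_1}z_i=0\}=\Sigma_{I\setminus\{i_1\}}$ and each local component extends smoothly via the integral taken over the full support of $m$. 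The step requiring genuine care — and the one I expect to be the main obstacle — is smoothness uniformly up to the exceptional divisor $\{z_{i_1}=0\}$, where the blow-up map degenerates and $\eps=z_{i_1}\rho$ vanishes. The decisive observation is that, after the reduction above, the integration domains $\{s(i)(z_i-\rho t_i)>0,\ i\ne i_1\}$ do not involve $z_{i_1}$, while the integrand $f_{\underline{s}}(B_{(I,i_1)}(z,\rho)-z_{i_1}\rho\,t)$ depends smoothly on $z_{i_1}$ because $B_{(I,i_1)}$ is polynomial; one may therefore differentiate under the integral sign in $z_{i_1}$ with no boundary contribution, yielding the required smoothness up to $\{z_{i_1}=0\}$ and completing the proof.
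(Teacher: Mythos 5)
Your proposal is correct and follows essentially the same route as the paper's proof: insert the local presentation into the convolution integral, pull back through the directional chart so that the $i_1$-th constraint factors as $s(i_1)\,z_{i_1}(1-\rho t_{i_1})>0$, and use the compact support of $m$ to show that on $W=\{\rho<1/R\}$ all summands with $s(i_1)=-$ vanish, which forces the identity with $(f_{I\setminus\{i_1\}})^{\reg}\circ B_{(I,i_1)}$. Your closing discussion of smoothness up to the exceptional divisor $\{z_{i_1}=0\}$ and the boundary fiber $\{\rho=0\}$ is slightly more explicit than the paper, which leaves that membership claim implicit in the form of the reduced integral, but it is the same argument.
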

\begin{proof}
In order to simplify the notation, let us assume that $I=\{1,\ldots,c\}$ for some $1 \le c \le n$ and that $i_1 = 1$. The proof is clearly analogous in the other directional charts. 

Under this assumption, the original function $f=f_I$ has the local presentation given by (\ref{loc-presentation}), and we can write the regularized function $f^\reg$ as
\begin{equation}\label{freg-sum1}
f^{\reg}(x,\eps) = 
\int_{\cR^{n-c}} 
  \left( \sum_{\underline{s}}
    \int_{\{\, \underline{s}\, (x_I - t_I\eps ) > 0 \,\}} 
      f_s\!\left(x_I - t_I\eps,\, x'- t'\eps \right)\,
      m(t_I,t') \, d t_I 
  \right) d t' 
\end{equation}
where the notation $\{\underline{s}\, (x_I - t_I \eps)>0\}$ stands for the subset of $\cR^{c}$ defined by the conditions $s_1(x_1 - t_1 \eps)>0,\ldots,s_c(x_c - t_c \eps) > 0$. We recall that the convention established in Remark~\ref{remark-dependence-base-point} is in force for all subsequent computations.

Observe that the directional blowing-up $B_{(I,i_1)}$ has the expression
\begin{equation}\label{x1-directional-blowup}
x_1 = z_1, \quad x_J = z_1 z_J,\quad \eps = z_1 \rho, \quad x' = z'
\end{equation}
where we write ${x}_J = (x_2,\ldots,x_c)$ and ${z}_J = (z_2,\ldots,z_c)$. Hence, in the coordinates $(z,\rho)$, the innermost integral in (\ref{freg-sum1}) can be decomposed as
$$
\int_{\{\, \underline{\tilde s} ({z}_J - \tau_J\rho ) > 0 \,\}}
     \left( \int_{\cR} \Big(
      \mathbf{1}_{(1- t_1 \rho) > 0}f_{(+,\underline{\tilde s})}(\star) + \mathbf{1}_{(1-t_1 \rho) < 0}
      f_{(-,\underline{\tilde s})}(\star) \Big)
      m(t_1,{t}_J,t') \, d t_1 \right) d {t}_J
$$
where we write $s=(\pm,\underline{\tilde s})$ with $\underline{\tilde s} = (s_2,\ldots,s_c)$,  $t_I = (t_1,t_J) \in \cR \times \cR^{c-1}$ and the $\star$ symbol stands for 
the expression $(z_1(1-t_1\rho ),z_1({z}_J-{t}_J\rho),\, z' - t' z_1\rho)$. 

We recall now that the mollifier $m$ has compact support, say 
\(\operatorname{supp}(m) \subset B(0,r)\) for some radius $r>0$. 
Hence, the function 
\[
t_1 \longmapsto \mathbf{1}_{(1 - t_1 \rho) < 0}\cdot m(t_1,\cdot)
\]  
vanishes identically whenever \(\tfrac{1}{\rho} > r\) (see Figure below).
\begin{figure}[htb]
\centering
{\includegraphics[width=5.61907221566312cm,height=4.21707333070969cm]{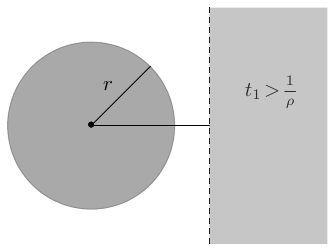}}
\end{figure}

In other words, restricted to the open set $W=\{\rho < 1/r\}$ (which is an open neighborhood of $\tilde M$), the above integral reduces to  
\begin{equation}\label{reduced-integral}
\int_{\{\, \underline{\tilde s} ({z}_J - \tau_J\rho ) > 0 \,\}}
     \left( \int_{\cR}
      f_{(+,\underline{\tilde s})}(\star)\,
      m(t_1,{t}_J,t') \, d t_1
   \right) d \underline{t}_J.
\end{equation}
It remains to observe that the above expression for the regularization involves only those components $f_{\underline{s}}$ of $f$ whose first entry is +, that is, the terms of the form $f_{(+,\underline{\tilde{s}})}$. Equivalently, the same regularization integral would be obtained by replacing 
$f_I$ with $f_{I \setminus \{i_1\}}$. Consequently, we have
\[
f^{\reg}_I \circ B_{(I,i_1)} \;=\; (f_{I \setminus \{i_1\}})^\reg \circ B_{(I,i_1)},
\qquad \text{upon restriction to } W.
\]
This completes the proof.
\end{proof}
We will now iterate the above result by considering a composed directional blowing-up $B_{(I,\underline{i})}$, as defined in  (\ref{composed-blowupdef}). 
\begin{Remark}
\label{Remark-exprB}
Note that, in the present setting, the composed phase directional blowing-up $B_{(I,
  \underline{i})}$ assumes the following form (to be compared with Remark \ref{Remark-exprB-nonreg})
 \[
\begin{cases}
x_{i_1} = z_{i_1}, \\[4pt]
x_{i_2} = z_{i_1} z_{i_2}, \\[4pt]
\vdots \\[4pt]
x_{i_{k-1}} = z_{i_1} \cdots z_{i_{k-1}}, \\[4pt]
x_{i_k} = z_{i_1} \cdots z_{i_k}, \\[4pt]
x_i = (z_{i_1} \cdots z_{i_k})\, z_i, \quad i \in I \setminus \underline{i}, \\[4pt]
\eps  =  (z_{i_1} \cdots z_{i_l}) \rho & \\
x' = z', \hspace{2.3cm} x', z' \in \cR^{n - \#I}.
\end{cases}
\]
where $z_{i_1},\ldots,z_{i_k}$ are variables in $\cR_{\ge 0}$.
\end{Remark}
As in the previous Lemma, we define the {\em initial manifold} in the blown-up space as the manifold locally given, in the coordinates $(z,\rho)$, by $\tilde M = \{\rho = 0\}$.

By an inductive application of the previous result and Lemma \ref{Lemma-composedB}, we obtain the following
\begin{Proposition}
  \label{prop-initial-fiber}
Let $f$ and $f^\reg$ be as in the enunciate of the Lemma \ref{lemma-+directional}.  Then, there exists a neighborhood $W\subset \tilde N$ of the initial manifold
  $\tilde{M}$ such that the composed function $f_I^\reg \circ B_{(I, \underline{i})}$ in an element of the space
  of $C^{\infty} (W, \tilde{M},\Sigma_{I \setminus \{ \underline{i} \}})$. Moreover, we have the equality
  \[ f^{\reg}_I \circ B_{(I, \underline{i})} = ( f _{I \setminus
     \underline{i}})^\reg \circ B_{(I, \underline{i})} 
  \]
  in restriction to $W$. 
 \end{Proposition}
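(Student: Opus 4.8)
The plan is to prove Proposition \ref{prop-initial-fiber} by induction on the length $k$ of the blowing-up sequence $\underline{i} = (i_1,\ldots,i_k)$, using Lemma \ref{lemma-+directional} as the base case and the engine of the induction. The key structural observation is that the composed map $B_{(I,\underline{i})}$ factors as $B_{(I,i_1)} \circ B_{(I_1,\underline{i}')}$, where $I_1 = I \setminus \{i_1\}$ and $\underline{i}' = (i_2,\ldots,i_k)$ is the tail. The subtlety that must be handled carefully is that the two lemmas ``live'' in different categories: Lemma \ref{Lemma-composedB} concerns the \emph{non-regularized} functions $f_I$ on piecewise smooth spaces $(\cR^n,\Sigma_I)$, whereas here we track the \emph{regularized} functions $f_I^\reg$ on the regularized spaces $(N_I,M_I,\Sigma_I)$. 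The bridge between the two is precisely the equality proved in Lemma \ref{lemma-+directional}, which says that regularization and the first directional blow-up commute with the combinatorial ``component-removal'' operation $f_I \mapsto f_{I\setminus\{i_1\}}$.

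First I would set up the induction. For $k=0$ the map is the identity and there is nothing to prove; for $k=1$ the statement is exactly Lemma \ref{lemma-+directional}. For the inductive step, I would apply Lemma \ref{lemma-+directional} to the first factor $B_{(I,i_1)}$, obtaining on a neighborhood $W_1$ of the initial manifold the equality
\[
f_I^\reg \circ B_{(I,i_1)} = (f_{I\setminus\{i_1\}})^\reg \circ B_{(I,i_1)},
\]
and the fact that the left-hand side lies in $C^\infty(W_1,\tilde M,\Sigma_{I\setminus\{i_1\}})$. The right-hand side is now the regularization of a piecewise smooth function $f_{I\setminus\{i_1\}} \in C^\infty(\cR^n,\Sigma_{I\setminus\{i_1\}})$ on the blown-up regularized space, so the inductive hypothesis applies to the shorter sequence $\underline{i}'$ acting on the index set $I_1 = I\setminus\{i_1\}$. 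This yields a neighborhood $W_2$ of the further-blown-up initial manifold on which
\[
(f_{I\setminus\{i_1\}})^\reg \circ B_{(I_1,\underline{i}')} = \bigl((f_{I\setminus\{i_1\}})_{I_1\setminus\underline{i}'}\bigr)^\reg \circ B_{(I_1,\underline{i}')},
\]
with the composite in the appropriate smooth regularized class. Composing the factorization and using the definition $f_{I\setminus\underline{i}} = (f_{I\setminus\{i_1\}})_{I_1\setminus\underline{i}'}$ gives the desired identity, and the target neighborhood is obtained by pulling back $W_2$ into $W_1$ under $B_{(I,i_1)}$.

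The main obstacle, and the place where I would spend the most care, is the \emph{neighborhood bookkeeping}. Lemma \ref{lemma-+directional} only guarantees the commutation on a neighborhood $W$ of the initial manifold whose size depends on the support radius $r$ of the mollifier (concretely $\{\rho < 1/r\}$ in the relevant chart). When one composes with a second directional blow-up, the effective mollifier in the new chart is a \emph{pushed-forward} generalized mollifier $\mathbf{m}_{\eps,x}$ as in Remark \ref{remark-dependence-base-point}, and its support, while still shrinking to the origin, need not have the same radius. I would therefore verify that at each stage the relevant truncation (the vanishing of the $\mathbf{1}_{(1-t_1\rho)<0}$-type terms observed in the proof of Lemma \ref{lemma-+directional}) remains valid on a possibly smaller, but still nonempty, neighborhood of the new initial manifold, and that the finite intersection of these neighborhoods over the $k$ steps is again an open neighborhood of $\tilde M$. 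Since $k \le \#I \le n$ is finite, this intersection is well-behaved; the point is simply to state that each $W_j$ contains the initial manifold and that finitely many such sets intersect in a neighborhood of $\tilde M$. Once this is pinned down, the algebraic identity and the membership in $C^\infty(W,\tilde M,\Sigma_{I\setminus\underline{i}})$ follow formally from the two cited results, so the proof reduces to a clean induction plus this uniformity remark.
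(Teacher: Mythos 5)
Your overall strategy---induction on the length of $\underline{i}$ with Lemma \ref{lemma-+directional} as base case and engine---is exactly what the paper intends (its entire proof is the sentence ``by an inductive application of the previous result and Lemma \ref{Lemma-composedB}''), and your concern about how the admissible neighborhood shrinks at each stage is well placed. However, the inductive step as you have written it does not type-check. After the first application of Lemma \ref{lemma-+directional} you are left with
\[
f_I^\reg\circ B_{(I,\underline{i})}=(f_{I\setminus\{i_1\}})^\reg\circ B_{(I,i_1)}\circ B_{(I_1,\underline{i}')},
\]
whereas the inductive hypothesis for the pair $(I_1,\underline{i}')$ is a statement about $(f_{I\setminus\{i_1\}})^\reg\circ B_{(I_1,\underline{i}')}$, i.e.\ the regularization computed in the \emph{original} coordinates composed with the composed blow-up \emph{starting from the original coordinates}. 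The map $B_{(I,i_1)}$ sits in between and cannot be absorbed: $(f_{I\setminus\{i_1\}})^\reg\circ B_{(I,i_1)}$ is the pull-back of a regularization, not a regularization computed in the $(z,\rho)$-chart (the regularization operator does not commute with blowing-up---that is the whole point of the paper), so the object you feed to the inductive hypothesis is not of the form that hypothesis governs, and the two composed maps $B_{(I_1,\underline{i}')}$ and $B_{(I,i_1)}\circ B_{(I_1,\underline{i}')}$ have different images, so the identity cannot simply be transported. Your appeal to the generalized mollifiers of Remark \ref{remark-dependence-base-point} does not repair this, since that remark concerns changes of coordinates by diffeomorphisms, while the directional blow-up is a non-invertible monomial map.

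The gap is reparable, and the repair is essentially what the paper does implicitly in the proof of the main induction step. Either (i) strengthen the inductive statement so that it applies to the class of integrals actually produced by a directional blow-up---integrands depending smoothly on $(z,\rho,t)$ with indicator factors of the form $\tmmathbf{1}_{\{s_i\,(1-t_i\,\mu_i(z)\,\rho)>0\}}$ for monomials $\mu_i$; this class is stable under further directional blow-ups and the truncation argument of Lemma \ref{lemma-+directional} applies verbatim to it---or (ii) bypass the induction and argue directly on the composed monomial map of Remark \ref{Remark-exprB}: substituting $x_{i_j}=z_{i_1}\cdots z_{i_j}$ and $\eps=(z_{i_1}\cdots z_{i_k})\rho$ into the condition $s_{i_j}(x_{i_j}-t_{i_j}\eps)>0$ yields $s_{i_j}(1-t_{i_j}\,z_{i_{j+1}}\cdots z_{i_k}\,\rho)>0$, which on the set $W=\bigcap_{j}\{z_{i_{j+1}}\cdots z_{i_k}\,\rho<1/r\}$ (an open neighborhood of $\tilde M=\{\rho=0\}$, $r$ being the support radius of $m$) forces $s_{i_j}=+$ for all $j$ simultaneously; this kills every component $f_{\underline{s}}$ carrying a minus sign in an $\underline{i}$-slot in one stroke and gives both the membership in $C^{\infty}(W,\tilde M,\Sigma_{I\setminus\underline{i}})$ and the stated identity, with an explicit $W$. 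Your finite-intersection remark then reduces to the observation that $W$ is open and contains $\tilde M$.
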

\subsubsection{Family-directional blowing-up}\label{subsect-blowing-up-regular}
We will denote by $B_F$ the
{family-directional blowing-up map}, given by
\[
\begin{cases}
     x_i = \rho\,  z_i, & i \in I ,\\[2pt]
     \eps =  \rho,& \rho \in \cR_{\ge 0}\\[2pt]
     x' = z'.
\end{cases}
\]

\begin{figure}[htb]
\centering
\includegraphics[width=9cm]{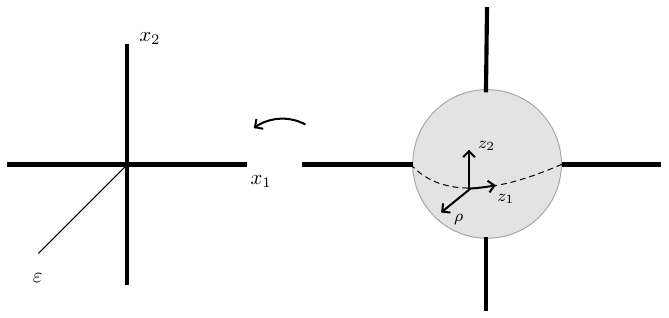}
\end{figure}

Therefore, given a piecewise smooth function $f\in C^\infty(M,\Sigma)$ and its regularization $f^\reg$ defined by (\ref{freg-sum1}), the composed function 
$f^\reg \circ B_F$ has the form
\begin{equation}\label{integral-conv-reg}
\int_{\cR^{n - \# I}} \left( \int_{\cR^I} \vf(\rho (x_I-t_I),x'-\rho t')\ m(t_I,t')\; dt_I \right) dt'
\end{equation}
where $t_I$ and $t'$ are variables in $\cR^I$ and $\cR^{n-\#I}$ respectively. We now prove the following result: 
\begin{Lemma}\label{Lemma-partial-conv}
The function $f^\reg\circ B_F$ is smooth.
\end{Lemma}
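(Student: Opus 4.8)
The plan is to expand $f$ into its local presentation, substitute the family blow-up, and then perform a linear change of variables in the convolution integral (\ref{integral-conv-reg}) which \emph{freezes} the otherwise parameter-dependent domain of integration. This reduces the matter to an integral over a fixed region with a jointly smooth integrand, whence smoothness follows by differentiation under the integral sign. First I would use the local presentation (\ref{loc-presentation}) of $f=f_I$ as a finite sum over sign vectors $\underline{s}\in\{+,-\}^I$, so that the integrand in (\ref{integral-conv-reg}) reads
$$
\vf(\rho(x_I - t_I), x' - \rho t') = \sum_{\underline{s}} \mathbf{1}_{\{\underline{s}\cdot\rho(x_I - t_I) > 0\}}\, f_{\underline{s}}(\rho(x_I - t_I), x' - \rho t').
$$
On the open set $\{\rho>0\}$ the positive factor $\rho$ may be dropped from every inequality, giving $\mathbf{1}_{\{\underline{s}\cdot\rho(x_I - t_I) > 0\}} = \mathbf{1}_{\{\underline{s}(x_I - t_I) > 0\}}$. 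Thus, for $\rho>0$, the function $f^\reg \circ B_F$ is a finite sum over $\underline{s}$ of integrals of the form
$$
\int_{\cR^{n-\#I}} \int_{\{\underline{s}(x_I - t_I) > 0\}} f_{\underline{s}}(\rho(x_I - t_I), x' - \rho t')\, m(t_I, t')\, dt_I\, dt'.
$$

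The key step is the substitution $\tau_I = x_I - t_I$ in the inner integral. Under it the domain $\{t_I : \underline{s}(x_I - t_I) > 0\}$ becomes the fixed orthant $\{\tau_I : \underline{s}\,\tau_I > 0\}$, which depends neither on $x_I$ nor on $\rho$, while the base point $x_I$ migrates harmlessly into the argument of the smooth mollifier. Each term then takes the form
$$
\int_{\cR^{n-\#I}} \int_{\{\underline{s}\,\tau_I > 0\}} f_{\underline{s}}(\rho\,\tau_I, x' - \rho t')\, m(x_I - \tau_I, t')\, d\tau_I\, dt',
$$
whose integrand is the product of the smooth function $f_{\underline{s}}(\rho\,\tau_I, x' - \rho t')$ and the smooth mollifier $m(x_I - \tau_I, t')$, hence jointly smooth in all of $(\rho, x_I, x', \tau_I, t')$. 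Crucially, no characteristic function whose locus moves with the parameters survives the substitution.

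It then remains to justify differentiation under the integral sign, including up to the boundary $\{\rho = 0\} = \tilde M$. Here the compact support of $m$ does the work: for $(x_I, x')$ in a compact set and $\rho$ bounded, the factor $m(x_I - \tau_I, t')$ confines $(\tau_I, t')$ to a fixed compact set, on which the integrand together with all its partial derivatives in $(\rho, x_I, x')$ is continuous and therefore bounded. The standard theorem on parameter-dependent integrals then shows that each term, and hence the finite sum $f^\reg \circ B_F$, is of class $C^\infty$ on a neighborhood of $\tilde M$; evaluating at $\rho=0$ produces the smooth boundary values $\sum_{\underline{s}} \int \int_{\{\underline{s}\,\tau_I > 0\}} f_{\underline{s}}(0, x')\, m(x_I - \tau_I, t')\, d\tau_I\, dt'$.

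The only genuine difficulty I anticipate is the smoothness \emph{across} $\{\rho = 0\}$, where in the original coordinates $f^\reg$ is discontinuous; a naive attempt to differentiate the integral in the coordinates before the substitution would generate boundary contributions from the moving discontinuity locus $\{\underline{s}(x_I-t_I)=0\}$. This is exactly what the change of variables $\tau_I = x_I - t_I$ resolves, by trading the moving discontinuity of $f$ for a smooth dependence of $m$ on the base point. The reason the family-directional blow-up is the right tool is structural: scaling both $x_I$ and $\eps$ by the single variable $\rho$ makes the sign conditions independent of $\rho$, so that the discontinuity of $f$ in the $x_I$-directions is integrated out smoothly rather than persisting along the exceptional divisor.
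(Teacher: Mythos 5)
Your proof is correct and follows essentially the same route as the paper's: the paper packages your change of variables $\tau_I = x_I - t_I$ into the assertion that the convolution $m^{t'} \ast \mathbf{g}$ of the auxiliary piecewise smooth function $\mathbf{g}(x_I,x',\eps)=f(\eps x_I, x')$ with the compactly supported slice $t_I\mapsto m(t_I,t')$ of the mollifier is smooth, and then integrates over $t'$. Your version simply unpacks that classical convolution-smoothing step explicitly (fixed orthant domains, jointly smooth integrand, differentiation under the integral sign justified by the compact support of $m$), which is the same mechanism presented in a slightly more self-contained way.
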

\begin{proof}
We write $(N_I,M_I,\Sigma_I)$ as $(N,M,\Sigma)$ to simplify the notation, and let $W=B_F^{-1}(N)$ denote the domain of the family chart.  

We consider the auxiliary function $\mathbf{g}(x_I,x',\eps) = \vf(\eps x_I,x')$, which is a piecewise smooth function on $N$ with discontinuity locus given by the union of $M=\{\eps = 0\}$ with the product variety $\Sigma \times (\cR_{\ge 0},0)$. Note that $\mathbf{g}$ depends smoothly on the variable $x'$. 

Now, for each fixed $t' \in \cR^{n-\# I}$, we define the function  
$$
G(x_I,x',\eps,t') =  \int_{\cR^{c}} \mathbf{g}(x_I-t_I,x',\eps)\ m(t_I,t')\; dt_I 
$$
Or, equivalently, $G$ is given by the convolution product  $m^{t'} \ast \mathbf{g}$, where $m^{t'}\in C^\infty_0(\cR^{I})$ is the smooth function (with compact support) defined by $t_I \mapsto m(t_I,t')$. From the classical properties of the convolution, it follows that $G $ is a globally smooth function on $N$. Finally, it suffices to observe that if we further integrate with respect to $t'$, and we define the smooth function
\begin{equation}\label{integral-H}
H(x_I,x',\eps) = \int_{\cR^{n-\# I}} G(x,x'-\eps t',\eps)\; dt',
\end{equation}
then, $f^\reg \circ B_F$ is obtained from $H$ simply by performing the change of variables $x_I \mapsto z_I, x'\mapsto z',\eps \mapsto \rho$. This shows that $f^\reg \circ B_F$ is smooth.
\end{proof}
\begin{Remark}
Note that the mollifier $m$ has compact support. Therefore, for each fixed $(x',x'',\eps)$, 
the function $t'' \mapsto G(x',x'',\eps,t'')$ has compact support and the integral (\ref{integral-H}) is well-defined. \\
\end{Remark}
\subsection{The Smoothing procedure}\label{subsect-smoothingprocedure}
We can now state the main result of this section. Let $(M,\Sigma)$ be a piecewise smooth space such that $M$ is an open subset of $\cR^n$. 
Let $(N,M,\Sigma)$ be the associated regularized piecewise smooth space and let 
$$
\reg_m \colon C^\infty(M,\Sigma) \longrightarrow C^{\infty,\rg}(N,M,\Sigma)
$$
be the regularization operator associated to a mollifier $m\in\Mol(\cR^n)$, as defined in  subsection \ref{subsection-regularizafcn}.
\begin{Theorem}\label{theorem-smoothing-rpss}
There exists a finite sequence of blowing-ups 
$$
(N, M, \Sigma) = (N_0, M_0, \Sigma_0) \longleftarrow  \cdots \longleftarrow (N_r, M_r, \Sigma_r)
$$
such that, for each $k=0,\ldots,r$, the following property holds: 
\begin{itemize}
\item The blow-up center $C_k$ is contained in the discontinuity locus $\Sigma_k$,
\item If $\Phi_k = \varphi_1 \circ \cdots \circ \varphi_k$ denotes the composition of the first $k$ blowings-up then the composition
$$
\Phi_k^*\circ \reg_m  
$$
defines a linear operator from $C^\infty(M,\Sigma)$ to  $C^{\infty}(N_k,M_k,\Sigma_k)$, for all choice of mollifier $m$. Here $\varphi^*$ denotes the pull-back operator
$\varphi^* f = f\circ \varphi$. 
\item The discontinuity locus $\Sigma_r$ is empty. 
\end{itemize}
\end{Theorem}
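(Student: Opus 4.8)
The plan is to argue by induction on the combinatorial invariant
\[
c_{\max} = \max\{\,\#I : \Sigma_I \neq \emptyset\,\},
\]
the largest number of components of $\Sigma$ meeting at a single point. The base case $c_{\max}=0$ is precisely the situation $\Sigma=\emptyset$, in which the regularization is already globally smooth and no blowing-up is required. The inductive step will be a finite sequence of blowings-up, one centered at each deepest stratum, whose net effect is to lower $c_{\max}$ by one; iterating, the process terminates after at most $n$ such rounds (since $c_{\max}\le n$), each round consisting of finitely many blowings-up because there are only finitely many strata at any level.

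For the inductive step I would first note that the deepest strata, namely the submanifolds $\Sigma_I$ with $\#I=c_{\max}$, are closed, pairwise disjoint, and disjoint from every component $\Sigma_j$ with $j\notin I$: a common point would lie in a stratum $\Sigma_{I'}$ with $\#I'>c_{\max}$, which is empty by maximality. Consequently each $\Sigma_I$ has multiplicity exactly $c_{\max}$, near it the only components present are the $\Sigma_i$ with $i\in I$, and the local adapted charts of subsection~\ref{subsect-local-situation-reg-I} apply. I would then blow up each such $\Sigma_I$ in turn; the order is immaterial since the centers are disjoint, and the requirement $C_k\subset\Sigma_k$ holds because $\Sigma_I\subset\Sigma$. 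The local effect of one such blowing-up is read directly off the directional charts: in the family chart, Lemma~\ref{Lemma-partial-conv} shows that $f^\reg_I\circ B_F$ is smooth throughout the chart (and the strict transform of $\Sigma$ does not meet it), while in each phase chart, Lemma~\ref{lemma-+directional} gives $f^\reg_I\circ B_{(I,i_1)}=(f_{I\setminus\{i_1\}})^\reg\circ B_{(I,i_1)}$, an element of $C^\infty(W,\tilde M,\Sigma_{I\setminus\{i_1\}})$ whose discontinuity locus has only $c_{\max}-1$ components.

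These local statements must then be glued into a global one, which is the delicate point. Away from the centers the blowing-up is an isomorphism and the multiplicity of the strict transform equals that of $\Sigma$, which is strictly below $c_{\max}$ there, since the value $c_{\max}$ is attained only on the centers; near a center, the chart analysis bounds the multiplicity by $c_{\max}-1$. Hence the new locus $\Sigma_k$ is again a smooth normal crossings variety, $(\tilde N,\tilde M,\tilde\Sigma)$ is again a piecewise smooth regularized space as recorded in subsection~\ref{subsect-local-situation-reg-I}, and its maximal multiplicity has genuinely dropped. That $\Phi_k^*\circ\reg_m$ lands in $C^\infty(N_k,M_k,\Sigma_k)$ follows because the discontinuities of $f^\reg$ are confined to $\{\eps=0\}$: over $\{\eps>0\}$ the pull-back of the smooth convolution under the proper analytic blow-up map is automatically smooth, and a neighborhood of $\{\eps=0\}$ in the blown-up space is covered jointly by the phase-chart neighborhoods $W$ of Lemma~\ref{lemma-+directional} and by the family chart of Lemma~\ref{Lemma-partial-conv}, whose conclusions hold for every mollifier $m$.

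The main obstacle is exactly this patching, and its subtlety lies in how the two kinds of directional chart fit together. The neighborhood $W=\{\rho<1/r\}$ furnished in a phase chart covers only the part of the exceptional divisor near the initial manifold; the complementary region, where $\eps$ dominates the corresponding phase coordinate, is precisely the region described by the family chart, on which $f^\reg_I\circ B_F$ is smooth. Verifying that these charts overlap compatibly, that their union covers the exceptional divisor lying over each center, and that the resulting strict transform is at every stage a smooth normal crossings variety of strictly smaller maximal multiplicity, is the crux of the argument; the reduction of multiplicity within each individual chart is already supplied by Lemmas~\ref{lemma-+directional} and~\ref{Lemma-partial-conv}. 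Once the gluing is established, the induction on $c_{\max}$ runs to completion, yielding a finite sequence of blowings-up with $\Sigma_r=\emptyset$.
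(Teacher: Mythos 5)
Your overall strategy coincides with the paper's: an induction that removes the deepest strata first (your invariant $c_{\max}$ is just the codimension in $M$ of the lowest-dimensional nonempty stratum, and your centers --- the disjoint union of the strata $\Sigma_I$ with $\#I=c_{\max}$ --- are exactly the centers $\tilde C=\bigcup_{\#I=n-k}\Sigma_I$ of the paper's $(k+1)^{\mathrm{st}}$ resolution step), with the chart-by-chart analysis delegated to the family chart (Lemma~\ref{Lemma-partial-conv}) and the phase-directional charts (Lemma~\ref{lemma-+directional}).

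There is, however, one genuine gap in the inductive step. You assert that ``the reduction of multiplicity within each individual chart is already supplied by Lemmas~\ref{lemma-+directional} and~\ref{Lemma-partial-conv}'', but both lemmas are stated for a function of the form $g^{\reg}$, the regularization of a piecewise-smooth function. At every round after the first, the function to be analyzed is $f^{\reg}\circ\Phi_{k}$, i.e.~a regularization \emph{composed with the previously performed blow-ups}; this is not of the form $g^{\reg}$ for any piecewise-smooth $g$ on the current space, so the two lemmas do not apply verbatim and your induction on $c_{\max}$ does not close as written. The paper bridges precisely this gap by building into its induction hypothesis $(H_k)$ the additional datum (item (c)) that, in suitable adapted charts, $\Phi_k$ is a composed phase-directional blowing-up $B_{(I_0,\underline{i})}$ with the explicit monomial form of Remark~\ref{Remark-exprB}. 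This allows it (i) to iterate the phase-chart identity into Proposition~\ref{prop-initial-fiber}, giving $f^{\reg}_{I_0}\circ B_{(I_0,\underline{i})}=(f_{I_0\setminus\underline{i}})^{\reg}\circ B_{(I_0,\underline{i})}$ on a neighborhood of the initial manifold, and (ii) to check that the composed integral $f^{\reg}_I\circ B_{(I_0,\underline{i})}\circ B_F$ still has the shape of the convolution integral~(\ref{integral-conv-reg}) with the scalar $\rho$ replaced by the positive monomial $w_{i_1}\cdots w_{i_l}\eta$, so that the proof of Lemma~\ref{Lemma-partial-conv} still applies. Some bookkeeping of this kind --- a normal form for $\Phi_k$ in local charts that is preserved by each new blowing-up --- is indispensable; by contrast, the chart-gluing issue you single out as the crux is comparatively routine once this normal form is in place, since the family chart together with the neighborhoods $W=\{\rho<1/r\}$ of the phase charts covers a neighborhood of the strict transform of $M$ over each center, and smoothness away from $\{\eps=0\}$ is automatic.
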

In particular, we conclude that for any piecewise smooth function $f \in C^\infty(M,\Sigma)$ and any mollifier $m$, the composed function 
$$
\vf^\reg \circ \Phi_r
$$
is a {\em globally smooth function} on the manifold $N_r$. 
The Figure \ref{figure-sequence-d2} illustrates the procedure in the case where $d=2$.  The gray shaded region represents the initial manifold.
\begin{figure}[htb]
\centering
 \includegraphics[width=10.2514675980585cm,height=4.62522956841139cm]{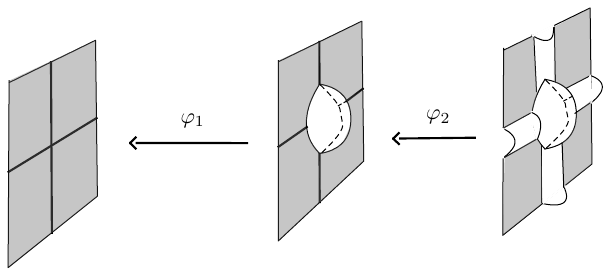}
 \caption{Sequence of blowing-up leading to a smoothing in dim 2.}\label{figure-sequence-d2}
\end{figure}

The proof of the Theorem is done by induction. So, assume by induction that the resolution sequence has already been defined up
to step $k$,
\[ (N, M, \Sigma) = (N_0, M_0, \Sigma_0) \longleftarrow (N_1, M_1, \Sigma_1)
   \longleftarrow \cdots \longleftarrow (N_k, M_k, \Sigma_k) \]
for some $0 \leqslant k \leqslant n$. \ Let $(\tilde{N}, \tilde{M},
\tilde{\Sigma}) = (N_k, M_k, \Sigma_k)$ denote the
$k^{\tmop{th}}$ regularized piecewise space in such blowing-up sequence and define
\[ \Phi : = \varphi_k \circ \cdots \circ \varphi_1 : (\tilde{N}, \tilde{M},
   \tilde{\Sigma}) \rightarrow (N, M, \Sigma) \]
to be the $k^{\tmop{th}}$-composed map. We recall that $\tilde{M},
\tilde{\Sigma}$ denote respectively the strict transforms of $M, \Sigma$ under
$\Phi$. If $k = 0$, we simply set $\varphi = \tmop{id}$.

We now state the following induction hypothesis.

\begin{Hypothesis}
  The regularized piecewise smooth space $(\tilde{N}, \tilde{M}, \tilde{\Sigma})$ is equipped with an indexing $\{ \tilde{\Sigma}_i \}_{i \in \llbracket
  1, c \rrbracket}$ of the irreducible components of $\tilde{\Sigma}$ satisfying the following
  conditions:
  \begin{enumeratealpha}
    \item Let $f^\reg \in C^{\infty} (N,M,\Sigma)$ be  the regularization of an arbitrary function in $C^\infty(M,\Sigma)$. Then its pull-back $\tilde{f}
    = f^\reg \circ \varphi$ under $\varphi$ defines a piecewise smooth function in
    $C^{\infty} (\tilde{N}, \tilde{M},\tilde{\Sigma})$. \
    
    \item For each subset $I \subset \llbracket 1, c \rrbracket$ of
    cardinality $\#I \leqslant n - k$, the stratum
    \[ \tilde{\Sigma}_I = \bigcap_{i \in I} \tilde{\Sigma}_i \]
    is empty.
    
    \item For each subset $I \subset \llbracket 1, c \rrbracket$ of
    cardinality $\#I > n - k$ there exists an open covering of
    $\tilde{\Sigma}_I$ by local adapted charts $(U, (z, \rho))$ such that to each one these charts is
    associated the following data:
    \begin{itemize}
      \item A subset of indices $I_0 \subset \llbracket 1, c \rrbracket$,
      
      \item A (possibly empty) ordered list $\underline{i} = (i_1, \ldots,
      i_l) \subset I_0 .$
      
      \item An adapted chart $\left( V, \left( x, \eps \right) \right)$ for
      $(N, \Sigma)$ centered at $\Sigma_{I_0}$.
    \end{itemize}
    Satisfying the following properties:
    \begin{enumeratenumeric}
      \item $I = I_0 \setminus \underline{i}$
      
      \item The image $\varphi (U)$ is contained in the domain $V$.
      
      \item In the coordinates $\left( x, \eps \right), (z, \rho)$, the map
      $\varphi$ assumes the form
      \begin{equation}
        \left( x, \eps \right) = B_{I_0, \underline{i}}  (z, \rho)
        \label{local-expr-blow-up}
      \end{equation}
      i.e. it is given by a sequence of phase-directional blowing-ups. We
      convention that $B_{I, \underline{i}} = \tmop{id}$ if $\underline{i} =
      \emptyset$.
    \end{enumeratenumeric}
  \end{enumeratealpha}
\end{Hypothesis}

If these conditions hold, we will shortly $[(\tilde{N},\tilde{M},
\tilde{\Sigma}), \varphi]$ {\tmem{satisfies hypothesis}} $(\tmop{Hyp}_k)$.

Assuming that this is true, it follows from item (a) that $\tilde{\Sigma}$
has no strata of dimension $\leqslant k - 1$ and the union
\[ \tilde{C} = \bigcup_{I : \#I = n - k} \Sigma_I \]
is a smooth closed submanifold of $\tilde{N}$. Based on this, we define the
$(k + 1)^{\tmop{th}}$ {\tmem{resolution step}} as the map
\begin{equation}
  \varphi_{k + 1} : (N_{k + 1},M_{k+1}, \Sigma_{k + 1}) \longrightarrow (N_k,M_k,
  \Sigma_k) = (\tilde{N}, \tilde{M},\tilde{\Sigma}) \label{k-blowing-up}
\end{equation}
given by the blowing-up of $(\tilde{N}, \tilde{\Sigma})$ with center
$\tilde{C}$. We recall that $M_{k+1},\Sigma_{k + 1}$ are defined as the strict
transforms of $M_k,\Sigma_k$ under $\varphi_{k + 1}$.

Using this hypothesis, the Theorem \ref{theorem-smoothing-rpss} will be an immediate consequence of the following result:
\begin{Proposition}
  The pair $[(\nobracket N_{k + 1}, M_{k+1},\Sigma_{k + 1}), \varphi_{k + 1} \circ
  \varphi \nobracket]$ satisfies hypothesis $(\tmop{Hyp}_{k + 1})$. 
\end{Proposition}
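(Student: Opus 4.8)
The plan is to verify the three conditions (a), (b), (c) of $(\tmop{Hyp}_{k+1})$ for the composed map $\Psi := \varphi_{k+1}\circ\varphi$, reducing everything to a local analysis of the single blowing-up $\varphi_{k+1}$ in its directional charts, combined with the inductive data supplied by $(\tmop{Hyp}_k)$. I first record that, since $\tilde C$ is a smooth closed submanifold of $\tilde N$ contained in $\tilde\Sigma$ and consisting of the minimal-dimensional (dimension $k$) strata, the blowing-up produces again a piecewise smooth regularized space $(N_{k+1},M_{k+1},\Sigma_{k+1})$, with $M_{k+1},\Sigma_{k+1}$ the strict transforms of $\tilde M,\tilde\Sigma$; the irreducible components of $\Sigma_{k+1}$ are the strict transforms of the $\tilde\Sigma_i$, which I re-index as $\{\Sigma_{k+1,i}\}$ (standard for blowings-up with smooth center in normal-crossings position, cf.\ \cite{Melrose}). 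The inclusion $\tilde C\subset\tilde\Sigma$ gives the first bullet of the Theorem automatically.

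Over $N_{k+1}\setminus E_{k+1}$, where $E_{k+1}$ is the new exceptional divisor, $\varphi_{k+1}$ is a diffeomorphism onto $\tilde N\setminus\tilde C$, so $f^\reg\circ\Psi=\tilde f$ is piecewise smooth by $(\tmop{Hyp}_k)(a)$ and the local data of $(\tmop{Hyp}_k)(c)$ transports unchanged; these charts account for all surviving strata $\tilde\Sigma_J$ with $\#J\le n-k-1$ lying off $\tilde C$. Near a point of $E_{k+1}$ I would use a local adapted chart $(U,(z,\rho))$ furnished by $(\tmop{Hyp}_k)(c)$ for a stratum $\tilde\Sigma_I$ with $\#I=n-k$ (so $\tilde\Sigma_I$ is a component of $\tilde C$), in which $\varphi=B_{(I_0,\underline i)}$ with $I=I_0\setminus\underline i$. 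By Proposition \ref{prop-initial-fiber}, $\tilde f|_U=(f_I)^\reg\circ B_{(I_0,\underline i)}$; unwinding the proof of Lemma \ref{lemma-+directional} (equivalently, invoking Remark \ref{remark-dependence-base-point}), in the coordinates $(z,\rho)$ this is the convolution regularization of the reduced piecewise smooth function $f_I$ with respect to a \emph{generalized} mollifier, whose extra base-point dependence is harmless. In these coordinates $\tilde C$ is precisely the stratum $\{\, z_i=0\ (i\in I),\ \rho=0\,\}$, so the charts of $\varphi_{k+1}$ are the phase-directional maps $B_{(I,j)}$, $j\in I$, together with the family-directional map $B_F$.

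I would then treat the two chart types. In the phase chart $B_{(I,j)}$, Lemma \ref{lemma-+directional} gives $\tilde f\circ B_{(I,j)}=(f_{I\setminus\{j\}})^\reg\circ B_{(I,j)}$, piecewise smooth on a neighborhood of the initial manifold with discontinuity locus $\Sigma_{I\setminus\{j\}}$; moreover the composition rule for directional blowings-up \eqref{composed-blowupdef} (see Remark \ref{Remark-exprB}) yields $\varphi\circ B_{(I,j)}=B_{(I_0,(\underline i,j))}$, so the data required by $(\tmop{Hyp}_{k+1})(c)$ is provided by the same $I_0$ and chart $V$, the extended list $(\underline i,j)$, and $I'=I_0\setminus(\underline i,j)=I\setminus\{j\}$, of cardinality $n-k-1=n-(k+1)$. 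In the family chart $B_F$, the generalized-mollifier version of Lemma \ref{Lemma-partial-conv} shows that $\tilde f\circ B_F$ is globally smooth; consistently, the strict transform of each $\tilde\Sigma_i$ is empty there, since $\rho$ is the family parameter and the total transform of $\{\, z_i=0,\ \rho=0\,\}$ reduces to the exceptional divisor $\{\rho=0\}$. Thus the family chart carries no discontinuity locus. Together with the diffeomorphic part, these local statements assemble into $(\tmop{Hyp}_{k+1})(a)$, and the chart bookkeeping gives $(\tmop{Hyp}_{k+1})(c)$.

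Finally, for $(\tmop{Hyp}_{k+1})(b)$ I argue that $\Sigma_{k+1}$ has no stratum of dimension $\le k$: away from $E_{k+1}$ every stratum is a strict transform of a stratum of $\tilde\Sigma$, which by $(\tmop{Hyp}_k)(b)$ has dimension $\ge k$, and the dimension-$k$ strata are exactly the components of $\tilde C$, now blown up; on $E_{k+1}$ the phase-chart computation shows the deepest surviving stratum is $\Sigma_{I\setminus\{j\}}$, of dimension $k+1$, while the family chart carries none. Hence the minimal stratum dimension rises to $k+1$. The step I expect to be the main obstacle is the family-directional chart: one must justify rigorously that, after the intermediate (singular) coordinate change $B_{(I_0,\underline i)}$, the pulled-back function is still the convolution regularization of $f_I$ with respect to a compactly supported generalized mollifier, so that Lemma \ref{Lemma-partial-conv} applies and the entire discontinuity locus is absorbed into the exceptional divisor. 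This is precisely the harmlessness of the base-point dependence recorded in Remark \ref{remark-dependence-base-point}, and it is what permits the single-blow-up Lemmas \ref{lemma-+directional} and \ref{Lemma-partial-conv} to be reused unchanged at every inductive step.
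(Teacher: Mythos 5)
Your proposal follows essentially the same route as the paper: treat the charts away from the center $\tilde C$ as diffeomorphic transports of the inductive data, and near the center decompose $\varphi_{k+1}$ into the phase-directional charts $B_{(I,j)}$ (handled by the composition rule $B_{(I_0,\underline i)}\circ B_{(I,j)}=B_{(I_0,\underline i')}$ together with Proposition \ref{prop-initial-fiber}) and the family-directional chart $B_F$ (handled by Lemma \ref{Lemma-partial-conv}), then do the bookkeeping for conditions (b) and (c). This is exactly the paper's proof, including the identification of the extended list $\underline i'=\underline i\cup\{j\}$ and the observation that the family chart absorbs the whole discontinuity locus into the exceptional divisor.

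The one place where you diverge is the step you yourself flag as the main obstacle: justifying Lemma \ref{Lemma-partial-conv} after precomposition with the singular map $B_{(I_0,\underline i)}$. You propose to do this via the \emph{generalized mollifier} of Remark \ref{remark-dependence-base-point}, but that remark concerns rewriting the convolution under a genuine diffeomorphic change of adapted coordinates, whereas $B_{(I_0,\underline i)}$ is a monomial (non-invertible) substitution, so it is not the right tool here. The paper's mechanism is more elementary: after applying Proposition \ref{prop-initial-fiber} to replace $f_{I_0}^\reg$ by $f_I^\reg$, one writes out the composed integral explicitly and observes that it is literally the smooth function $H$ produced by Lemma \ref{Lemma-partial-conv}, evaluated at $\bigl(w_I,\,b(w''),\,\mon\bigr)$ with $\mon=w_{i_1}\cdots w_{i_l}\eta$ a nonnegative monomial and $b$ smooth; smoothness then follows by composition, with no modification of the mollifier at all. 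Your conclusion is correct, but the substitution argument is what actually closes the step, and it is worth knowing that the reduction is this direct.
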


\begin{proof}
  To simplify the notation, we write $(N', M',\Sigma') = (N_{k + 1}, M_{k+1},\Sigma_{k +
  1})$ and denote the blowing-up (\ref{k-blowing-up}) by $\psi : (N', M',\Sigma')
  \rightarrow (\tilde{N}, \tilde{M},\tilde{\Sigma}) .$
  
  The proof consists in studying the local expression of such blowing-up
  through the local adapted charts $(U, (z, \rho))$ given by item $(c)$ of
  $(\tmop{Hyp}_k)$.
  
  Suppose firstly that the domain of $(U, (z, \rho))$ does not intersect the
  blowing-up center $\tilde{C}$. Then, $\psi$ defines a diffeomorphism in
  restriction to $U$, and the pair
  \[ (\psi^{- 1} (U), (z, \rho) \circ \psi) \]
  defines a local adapted chart for $(N', M',\Sigma')$ satisfying all the needed
  conditions.
  
  Suppose now that the chart $(U, (z, \rho))$ intersects $\tilde{C}$.
  Then the chart is centered on a strata $\tilde{\Sigma}_I$ given by some
  index set $I \subset \llbracket 1, c \rrbracket$ of cardinality $\#I = n -
  k$. Expressed in this chart, the blowing-up is covered by the
  family-directional chart $B_F$ and by the union of the phase-directional
  charts $B_{I, j}$, where the index $j$ varies in $I$, as described in subsections \ref{subsect-phase-blowing-up} and \ref{subsect-blowing-up-regular}.
  
  Let us consider respectively the expressions of the composed map $B_{I_0,
  \underline{i}} \circ B_F$ and $B_{I_0, \underline{i}} \circ B_{I, j}$, where
  $B_{I_0, \underline{i}}$ is defined by (\ref{local-expr-blow-up}).
  
  Based on Remark \ref{Remark-exprB}, if we write the coordinates of the
  family-directional chart as $(w, \eta)$, the map $B_{I_0, \underline{i}}
  \circ B_F$ assumes the form
  \begin{equation}
    \left\{\begin{array}{llll}
      x_i & = & (w_{i_1} \cdots w_{i_l} \cdot \eta) w_i & , \quad i \in I\\
      \eps & = & (w_{i_1} \cdots w_{i_l} \cdot \eta) & \\
      x_{i_1} & = & w_{i_1} & \\
      x_{i_2} & = & w_{i_1} w_{i_2} & \\
      \vdots &  &  & \\
      x_{i_l} & = & w_{i_1} \cdots w_{i_l} & \\
      x' & = & w' & , \quad x', w' \in \cR^{n -\#I_0}
    \end{array}\right. \label{chartBF}
  \end{equation}
  Similarly, if we write the coordinates of the $j^{\tmop{th}}$-directional
  chart $B_{I, j}$ as $(w, \eta)$, then map $B_{I_0, \underline{i}} \circ
  B_{I, j}$ takes the form
  \[ \left\{\begin{array}{llll}
       x_j & = & (w_{i_1} \cdots w_{i_l} \cdot w_j)  & \\
       x_i & = & (w_{i_1} \cdots w_{i_l} \cdot w_j) w_i & , \quad i \in I
       \setminus \{ j \}\\
       \eps & = & (w_{i_1} \cdots w_{i_l} \cdot w_j) \eta & \\
       x_{i_1} & = & w_{i_1} & \\
       x_{i_2} & = & w_{i_1} w_{i_2} & \\
       \vdots &  &  & \\
       x_{i_l} & = & w_{i_1} \cdots w_{i_l} & \\
       x' & = & w' & , \quad x', w' \in \cR^{n -\#I_0}
     \end{array}\right. \]
 We now prove that $f^{\reg} \circ B_{I_0, \underline{i}} \circ B_F$
  defines a smooth map. 
Firstly,  it follows from Proposition \ref{prop-initial-fiber} that, since $I=I_0 \setminus \underline{i}$, we can write
 \begin{equation}\label{freg-reg}
  f_{I_0}^\reg \circ B_{I_0,\underline{i}} = f_I^\reg \circ B_{I_0,\underline{i}}
 \end{equation}
  Therefore, the regularization integral can be locally decomposed as
   \[ f_I^{\reg} = \int_{\cR^{n -\#I}} \int_{\cR^I} f_{I}
     \left( x_{I} - \eps t_{I}, x'' - \eps t'' \right) m (t_{I}, t'')\, d t_{I} \,d t'' \]
  where we denote by $x''$ the
  variables $(x_{i_1}, \ldots, x_{i_l}, x')$. Note that $f_I$ depends smoothly on $x''$. 
  
  By right-composing with the
  map $B_{I_0, \underline{i}} \circ B_F$, we obtain
  \[ f_I^{\reg} \circ B_{I, \underline{i}} \circ B_F =
     \int_{\cR^{n -\#I}} {\int_{\cR^I}}  f_I \left( \mon \cdot
     (w_I - t_I), b (w'') - \mon t' \right) m (t_I, t'') d t_I d t'' \]
  where $\mon$ denotes the monomial $w_{i_1} \cdots w_{i_l} \cdot \eta$ and we
  denote by $w''$ collection of variables $(w_{i_1}, \ldots, w_{i_l}, w')$,
  and by $x'' = b (w'')$ the map defined by the third through the last lines
  of the system of equations in (\ref{chartBF}). 
  
 This last integral has precisely the form of the integral (\ref{integral-conv-reg}), studied in Lemma \ref{Lemma-partial-conv}, up to substituting the positive variable $\rho$ by the positive monomial $\mon$ and replacing the {\em smooth variable} by $x'$ by the smooth function $b(w'')$.  It follows
  from that result $f_I \circ B_{I, \underline{i}} \circ B_F$ is a smooth
  function.
  
 We now study the phase directional blowing-ups.  
 Note that, by defining the new list $\underline{i}' = \underline{i} \cup \{ j \}$, we can write
  $B_{I_0, \underline{i}} \circ B_{I, j} = B_{I_0, \underline{i'}}$. Therefore, applying again Proposition \ref{prop-initial-fiber}, we obtain
  have the equality
  \[ f_{I_0}^{\reg} \circ \left( B_{I_0, \underline{i}} \circ B_{I, j}
     \right) = ( f_{I_0 \setminus \underline{i}'} )^{\reg}\circ B_{I_0,
     \underline{i'}}  \]
  which shows that, when restricted to the domain $W = (B_{I, j})^{- 1} (V)$,
  the map $f^{\reg} \circ \varphi \circ \psi$ lies in the space $C^{\infty} \left(
  N', M',\Sigma'_{I_0 \setminus \underline{i}'} \right)$. Therefore, the pair
  \[ (W, (w, \eta)) \]
  defines a new adapted chart satisfying all conditions stated in item (c) of
  $(\tmop{Hip}_{k + 1})$.  This concludes the proof.
\end{proof}
\section{Piecewise-smooth vector fields: regularization and smoothing}\label{s6}
We now consider the problem of regularization and smoothing of piecewise smooth vector fields. Our goal is to establish a version of the Smoothing Theorem \ref{theorem-smoothing-rpss}  that applies to vector fields.  
\subsection{Regularization of piecewise-smooth vector fields}\label{subsect-reg-vf}

Given a piecewise smooth space $(M,\Sigma)$,  denote by $\mathfrak{X }^{\infty} (M, \Sigma)$ the space of vector fields
which can be locally written as
$$
X = \sum_{k = 1}^n \vf_k \ \frac{\partial}{\partial x_k}
$$
where the components $\vf_k$ are elements of $C^{\infty} (M, \Sigma)$. The topology in
$\mathfrak{X }^{\infty} (M, \Sigma)$ is the componentwise topology induced
from $C^{\infty} (M, \Sigma)$. Note that, for $\Sigma = \emptyset$,
$\mathfrak{X }^{\infty} (M, \Sigma)$ is equal to the space $\mathfrak{X }^{\infty} (M)$ of globally smooth vector fields on $M$.

Let us assume that $M$ is an open subset of $\cR^n$. Then, the regularization by convolution defined in subsection \ref{subsection-regularizafcn} can be easily adapted to the case of piecewise-smooth vector fields in $\mathfrak{X }^{\infty} (M,\Sigma)$: Given a piecewise-smooth vector field $X = \sum_{k = 1}^n \vf_k (x)
\frac{\partial}{\partial x_k}$ in $\mathfrak{X }^{\infty} (M, \Sigma)$ and a mollifier $m$, we consider the family of vector fields with parameter $\eps >0$ defined by
\begin{equation}\label{reg-conv-X}
m_\eps \ast X := \sum_{k = 1}^n \big(\, m_\eps \ast \vf_k\, \big) 
   \frac{\partial}{\partial x_k} 
\end{equation}
where each component $m_\eps \ast \vf_k$ is given by the expression (\ref{expr-f_k}).
As in definition \ref{def-regfconv}, we extend $m_\eps \ast X$ to a one-parameter family with parameter $\eps \in (\cR_{\ge 0},0)$ as follows:
\begin{Definition}\label{def-regularized-family}
The {\em regularization by
convolution of $X$} (with mollifier $m$) is the one-parameter family of vector fields $X^\reg$ in $M$, with parameter $\eps \in  (\cR_{\ge 0},0)$, defined by 
$X^\reg |_{\eps =0} =X$ and $X^\reg |_{\eps > 0} = m_\eps \ast X$. 
\end{Definition}
The following result is an immediate consequence of Theorem \ref{rem-conv}:
\begin{Proposition}
The family $X^\reg$ satisfies the following properties:
\begin{enumerate}
\item $X^\reg$ is a smooth for $\eps >0$.
\item As $\eps \rightarrow 0$, $X^\reg$ converges to $X$ (with respect to the $C^\infty$-topology) on each compact set $K \subset M\setminus\Sigma$.
\end{enumerate}
\end{Proposition}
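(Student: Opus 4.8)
The plan is to reduce both assertions to a componentwise application of Theorem~\ref{rem-conv}. By Definition~\ref{def-regularized-family}, for $\eps>0$ we have $X^\reg = m_\eps \ast X = \sum_{k=1}^n (m_\eps \ast \vf_k)\,\frac{\partial}{\partial x_k}$, where each coefficient $\vf_k$ belongs to $C^\infty(M,\Sigma)$. Since the topology on $\mathfrak{X}^\infty(M,\Sigma)$ is the componentwise topology induced from $C^\infty(M,\Sigma)$, it suffices to prove the smoothness in item (1) and the $C^\infty$-convergence in item (2) separately for each scalar coefficient $m_\eps \ast \vf_k$. A preliminary observation I would record is that for each fixed $\eps>0$ the rescaled function $m_\eps$ of~(\ref{rescaled-mollifier}) is again a mollifier, and that if $m$ has support in $\overline{B_r(0)}$ then $m_\eps$ has support in $\overline{B_{\eps r}(0)}$; consequently, for any sequence $\eps_j\to 0$ the family $\{m_{\eps_j}\}$ is a regularizing sequence in the sense defined above.

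For item (1), each $\vf_k$ is piecewise smooth, hence bounded on compact subsets of $M$ and therefore locally integrable. After an (arbitrary) extension to an element of $L^1_{\mathrm{loc}}(\cR^n)$, Theorem~\ref{rem-conv} gives $m_\eps \ast \vf_k \in C^\infty(\cR^n)$ for each fixed $\eps>0$, so $X^\reg$ is a smooth vector field there. To obtain joint smoothness in $(x,\eps)$ on $\{\eps>0\}$, I would instead use the representation~(\ref{expr-f_k}), namely $m_\eps\ast\vf_k(x)=\int_{\cR^n}\vf_k(x-\eps t)\,m(t)\,dt$, whose integrand is smooth in $(x,\eps)$ wherever $x-\eps t$ stays inside $M$; since $m$ has compact support, differentiation under the integral sign is justified and yields smooth dependence on the parameter as well.

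For item (2), the decisive point is that $\tmop{singsupp}(\vf_k)\subset\Sigma$, because $\vf_k\in C^\infty(M,\Sigma)$ admits smooth extensions across each component of $\Sigma$. Thus any compact $K\subset M\setminus\Sigma$ is disjoint from $\tmop{singsupp}(\vf_k)$, and the last clause of Theorem~\ref{rem-conv}, applied to the regularizing sequence $\{m_{\eps_j}\}$, gives $m_{\eps_j}\ast\vf_k\to\vf_k$ in the $C^\infty$-topology on $K$; as this holds for every sequence $\eps_j\to0$, we conclude $m_\eps\ast\vf_k\to\vf_k$ in $C^\infty(K)$ as $\eps\to0$, and combining the finitely many coefficients gives $X^\reg\to X$ on $K$. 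The only genuine subtlety, which is the step I would treat most carefully, is the passage from the global statement of Theorem~\ref{rem-conv} on $\cR^n$ to the present setting where $M$ is merely an open subset of $\cR^n$ and the extension of $\vf_k$ is not canonical. I would handle this by localizing: for $x$ ranging over a fixed compact set and $\eps<\operatorname{dist}(K',\partial M)/r$, the integral~(\ref{expr-f_k}) samples $\vf_k$ only at points of $M$, so the arbitrary extension is irrelevant on such sets for small $\eps$. This localization makes both claims purely local and reduces them cleanly to Theorem~\ref{rem-conv}, confirming that the Proposition is indeed an immediate consequence of it.
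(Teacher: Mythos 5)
Your proposal is correct and follows exactly the route the paper intends: the paper offers no separate proof, stating only that the Proposition is ``an immediate consequence of Theorem~\ref{rem-conv},'' and your componentwise reduction plus the observation that $\tmop{singsupp}(\vf_k)\subset\Sigma$ is precisely how that consequence is drawn. The localization remark about $M$ being merely an open subset of $\cR^n$ is a sensible extra precaution that the paper leaves implicit.
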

As in the subsection \ref{subsection-regularizafcn} we observe that we can interpret $X^\reg$ as piecewise smooth vector field in the space
$\mathfrak{X}^{\infty} (N, \Sigma)$, where 
$N=M\times (\cR_{\ge 0},0)$ and the variety $\Sigma$ is embedded in $N$ as $\Sigma \times \{0\}$. 

Notice that $X^\reg$ has the additional property of being tangent to the fibers of the fibration $ \{\eps = cte\}$ defined by the level sets of the linear projection 
$$\pi: N \rightarrow \cR_{\ge 0}$$
onto the $\eps$-coordinate.  
Since we will need to keep track of this extra structure, we introduce the following notion. A {\em fibered piecewise smooth regularized space} is a $4$-uple
$(N,M,\Sigma,\pi)$, where
\begin{enumeratenumeric}
  \item $(N,M,\Sigma)$ is a piecewise smooth regularized space (see definition at subsection \ref{subsect-piecewisesmoothfcs})
  \item $\pi:N\rightarrow (\cR_{\ge 0},0)$ is smooth map such that $F_0=\pi^{-1}(0)$ coincides with $\partial N$ and $\pi$ is a submersion restricted to $N\setminus\partial N$.
 \end{enumeratenumeric}
In particular, notice that the {\em zero-fiber} $F_0$ contains the {\em initial manifold} $M$ and that the fiber $F_\eps = \pi^{-1}(\eps)$ is a smooth submanifold of $N$ for any $\eps > 0$. 
\begin{figure}[htb]
\centering
\includegraphics[width=4.96444969172242cm,height=6.73237242555424cm]{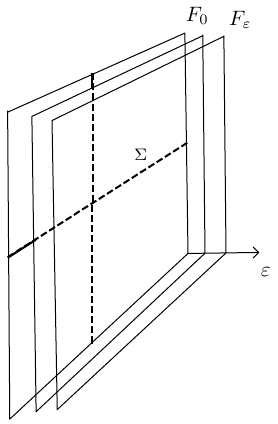}
\end{figure}
We denote by $\mathfrak{X}^{\infty} (N, M,\Sigma,\pi)\subset \mathfrak{X}^{\infty}(N,\Sigma)$ the subspace of piecewise smooth vector fields which are everywhere tangent to the fibers of the fibration $\{d \pi = 0\}$. 

A {\em diffeomorphism} between two fibered piecewise smooth regularized spaces $(N, M,\Sigma,\pi)$ and $(\tilde N, \tilde{M}, \tilde{\Sigma},\tilde{\pi})$ is defined by a diffeomorphism $\psi:N \rightarrow \tilde N$ such that $\tilde{M} = \psi(M)$, $\tilde{\Sigma} = \psi(\Sigma)$ and
$\tilde \pi = \pi \circ \psi^{-1}$. We will denote such diffeomorphism by
$$
\psi: (N, M,\Sigma,\pi) \longrightarrow (\tilde N, \tilde{M}, \tilde{\Sigma},\tilde{\pi}) 
$$
To summarize, given a piecewise smooth space $(M,\Sigma)$ with $M\subset \cR^n$ an open subset, and given a mollifier $m$, the regularization by convolution defines a linear map
$$
\reg_m: \mathfrak{X}^{\infty} (M, \Sigma) \longmapsto \mathfrak{X}^{\infty} (N, M,\Sigma,\pi)
$$
where $(N, M,\Sigma,\pi)$ is a fibered piecewise smooth regularized space with domain $N=M\times(\cR_{\ge 0},0)$ and $\pi:N \rightarrow (\cR_{\ge 0},0)$ is the linear projection. 
\subsection{Piecewise-smooth oriented 1-dimensional foliations}\label{subsect-piecewisesmoothfoliations}
Although our primary objects of study are piecewise-smooth vector fields, we will see that
successive blow-up operations naturally lead us to consider the broader class of 
piecewise-smooth foliations, which have also been studied in \cite{PS}. We now recall the relevant definitions.

Let $(N,M,\Sigma,\pi)$ be a fibered piecewise smooth regularized space as defined in the previous subsection. A \emph{piecewise-smooth 1-dimensional oriented foliation} (or, more shortly, a {\em piecewise 1-foliation}) on $(N,M,\Sigma,\pi)$ is a collection of pairs
$$
\vF = \{(U_i,X_i)\}_{i \in I}
$$
such that:
\begin{enumeratenumeric}
\item  $\{U_i\}$ is an open covering of $N$. 
\item For each $i\in I$, $X_i$ is a vector field in $\mathfrak{X}^{\infty} (U_i, M \cap U_i,\Sigma\cap U_i,\pi|_{U_i})$.
\item For each pair $i,j \in I$, we have the equality
\begin{equation}\label{transition-function}
X_i = \phi_{ij}\, X_j
\end{equation} 
for some strictly positive smooth function $\phi_{ij}$ defined on $U_i\cap U_j$. 
\end{enumeratenumeric}
We will denote the set of all such foliations by $\Fol^\infty(N,M,\Sigma,\pi)$.
\begin{Remark}
Note that we require the transition functions $\phi_{ij}$ to belong to $C^\infty(U_i \cap U_j)$, rather than to $C^\infty(U_i \cap U_j, \Sigma)$. 
\end{Remark}
A pair $(V,Y)$ will be called a \emph{local generator }of the foliation $\vF$ if the augmented collection
$$
\{(U_i,X_i)\}_{i \in I} \; \cup \; \{ (V,Y)\}
$$ 
also satisfies conditions 1.-3.~of the above definition.  From now on, we will 
suppose that the collection $\vF$ is {\em saturated}, meaning
that it contains all such local generators.
\begin{Example}
A piecewise smooth vector field $X \in \mathfrak{X}^{\infty} (N, M,\Sigma,\pi)$ defines a piecewise 1-foliation 
$\vF = \vF_X$ which is globally generated by $X$. 
\end{Example}
Let $(N, M,\Sigma,\pi)$ and $(\tilde N, \tilde{M}, \tilde{\Sigma},\tilde{\pi})$ be two fibered piecewise smooth regularized spaces. 
We will say that two piecewise 1-foliations $\vF \in \mathfrak{F}^{\infty} (N, M,\Sigma,\pi)$ and $\tilde \vF \in \mathfrak{F}^{\infty}(\tilde N, \tilde{M}, \tilde{\Sigma},\tilde{\pi})$ are {\em smoothly equivalent} if there exists a diffeomorphism 
$$\psi : (N, M,\Sigma,\pi) \rightarrow (\tilde N, \tilde{M}, \tilde{\Sigma},\tilde{\pi})$$ 
defined according to the previous subsection, such that:
For each local generator $(U,X)$ of $\vF$, its push-forward under $\psi$, namely 
$$
(\tilde U,\tilde X) := \big( \psi(U), \psi_* X \big),
$$
is a local generator of $\tilde \vF$.
\subsection{Blowing up of piecewise 1-foliations}
We now define the blowing-up operation in the context of fibered piecewise smooth regularized spaces and piecewise 1-foliations.  

The {\em blowing-up} of $(N, M,\Sigma,\pi)$ (with center $C$) is a new fibered piecewise smooth regularized space $(\tilde{N},\tilde{M},\tilde{\Sigma},\tilde \pi)$ defined as follows:
\begin{enumerate}
\item The manifold $\tilde{N}$ is obtained from $N$ by the blowing-up 
$$\varphi:\tilde{N} \rightarrow N$$ 
with a center $C$ contained in the discontinuity locus $\Sigma$. 
\item $\tilde M$ and $\tilde \Sigma$ are the strict transforms of $M$ and $\Sigma$ under $\varphi$.
\item The projection map $\tilde \pi:\tilde N \rightarrow (\cR_{\ge 0},0)$ is given by $\tilde \pi = \pi \circ \varphi$. 
\end{enumerate}
We will denote the such blowing-up map by 
$$\varphi: (\tilde{N},\tilde{M},\tilde{\Sigma},\tilde \pi) \longrightarrow (N, M,\Sigma,\pi)$$
Suppose now that we fix a piecewise 1-foliation $\vF \in \mathfrak{F}^{\infty} (N, M,\Sigma,\pi)$. We will say that a piecewise 1-foliation $\tilde \vF \in \mathfrak{F}^{\infty}(\tilde N, \tilde{M}, \tilde{\Sigma},\tilde{\pi})$ is a {\em blowing-up of $\vF$} if $\varphi$ induces a smooth equivalence between the foliations $\vF$ and $\tilde \vF$ outside the blowing-up locus.  

More precisely, observe that $\varphi$ establishes a diffeomorphism between 
$$
(\tilde{N} \setminus D,\tilde{\Gamma}\setminus D,\tilde M\setminus D,\tilde \pi)\quad\text{and}\quad
(N \setminus C,M \setminus C,\Gamma \setminus C, \pi)
$$
where $D = \varphi^{-1}(C)$ denotes the exceptional divisor.  We therefore require the restriction $\tilde \vF |_{\tilde N\setminus D}$ to be smoothly equivalent to $\vF |_{N\setminus C}$ via $\varphi$.
\subsection{Smoothing theorem regularized vector fields}\label{subsect-smoothing-vf}
We can now state the analog of Theorem \ref{theorem-smoothing-rpss} in the context of regularized piecewise-smooth vector fields. Let $(M,\Sigma)$ be a piecewise smooth space and let $(N,M,\Sigma,\pi)$ be the associated fibered regularized piecewise smooth space such that 
$$
\reg_m: \mathfrak{X}^{\infty} (M, \Sigma) \longmapsto \mathfrak{X}^{\infty} (N, M,\Sigma,\pi)
$$
is the regularization operator on piecewise-smooth vector fields associated to a mollifier $m\in\Mol(\cR^n)$, as defined in  subsection \ref{subsect-reg-vf}.
\begin{Theorem}\label{theorem-smoothing-regvf}
There exists a finite sequence of blowings-up
\[
(N, M, \Sigma, \pi) = (N_0, M_0, \Sigma_0, \pi_0)
\xleftarrow{\;\varphi_1\;}
\cdots
\xleftarrow{\;\varphi_r\;}
(N_r, M_r, \Sigma_r, \pi_r)
\]
such that the following properties hold:
\begin{itemize}
  \item Each blow-up center \(C_k\) is contained in the corresponding discontinuity locus \(\Sigma_k\).
  
  \item Given an arbitrary vector field \(X \in \mathfrak{X}^{\infty}(M, \Sigma)\) and a mollifier \(m\), let \(\vF = X^{\mathrm{reg}} = \mathrm{reg}_m(X)\) denote the $1$-foliation in \(\mathfrak{X}^{\infty}(N, M, \Sigma, \pi)\) defined by the regularization of \(X\). Then there exists a sequence of $1$-foliations
  \begin{equation}\label{severalvFk}
  \vF_k \in \mathfrak{X}^{\infty}(N_k, M_k, \Sigma_k, \pi_k), \quad k = 0, \ldots, r,
  \end{equation}
  such that \(\vF_0 = \vF\) and \(\vF_{k}\) is a blowing-up of \(\vF_{k-1}\) under \(\varphi_k\).
  
  \item The final discontinuity locus \(\Sigma_r\) is empty.
\end{itemize}
\end{Theorem}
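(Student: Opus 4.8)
The plan is to deduce the vector-field statement from the scalar Smoothing Theorem~\ref{theorem-smoothing-rpss} by regarding a vector field as a tuple of component functions and treating separately the effect of the blow-ups on the coordinate frame. First I would take \emph{precisely} the blow-up sequence furnished by Theorem~\ref{theorem-smoothing-rpss}: that sequence depends only on the geometry of $(N,M,\Sigma)$, not on the object being regularized, and it already terminates with $\Sigma_r=\emptyset$, which yields the last bullet at once. Writing $X=\sum_j f_j\,\partial/\partial x_j$, formula~\eqref{reg-conv-X} gives $X^{\reg}=\sum_j f_j^{\reg}\,\partial/\partial x_j$, so in the standard frame each coefficient of $X^{\reg}$ is the convolution regularization of a scalar function $f_j\in C^\infty(M,\Sigma)$. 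Applying Theorem~\ref{theorem-smoothing-rpss} simultaneously to all the $f_j^{\reg}$ — through Proposition~\ref{prop-initial-fiber} in the phase-directional charts and Lemma~\ref{Lemma-partial-conv} in the family-directional chart — I conclude that, in every local adapted chart of $N_r$, each pulled-back coefficient $\Phi_r^*f_j^{\reg}$ is a globally smooth function.

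The foliations $\vF_k$ would be built inductively, chart by chart, mirroring the bookkeeping of the scalar proof. Off the exceptional divisor each $\varphi_k$ is a diffeomorphism, so $\vF_k$ is forced to be the push-forward of $\vF_{k-1}$ there and the only issue is the smooth extension across the divisor. In a phase-directional chart with radial coordinate $z_{i_1}$ — the local equation of the new divisor — the monomial form of the map recorded in Remark~\ref{Remark-exprB} shows that re-expressing the base frame in the new coordinates produces only poles in $z_{i_1}$; this is already visible in the elementary model of the blow-up of a point, where $\partial/\partial x=\partial/\partial r-(s/r)\,\partial/\partial s$ and $\partial/\partial y=r^{-1}\partial/\partial s$. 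Since the coefficients $\Phi_r^*f_j^{\reg}$ are smooth, a single power of $z_{i_1}$ clears all these poles. I would therefore take as local generator of $\vF_k$ the field $z_{i_1}^{a}\,Y$, where $Y$ denotes the pull-back of a local generator of $\vF_{k-1}$ and $a\ge 0$ is the least exponent making $z_{i_1}^{a}\,Y$ smooth; by minimality this generator is not divisible by $z_{i_1}$, hence does not vanish identically on the divisor. In the family-directional chart the radial coordinate is $\rho=\eps$ and the required rescaling is multiplication by a power of $\eps$ — exactly the time reparametrization ``upon multiplication by $\eps$'' illustrated in the Sewing example of Section~\ref{section-examples-Paulo}.

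Three verifications then complete the argument. Because the radial coordinate is a \emph{nonnegative} coordinate, each factor $z_{i_1}^{a}$ (respectively $\eps^{a}$) is strictly positive off the divisor, so the generator of $\vF_k$ is a strictly positive multiple of the honest pull-back of the generator of $\vF_{k-1}$ on $\tilde N\setminus D$; this is exactly the requirement, from Subsection~\ref{subsect-piecewisesmoothfoliations}, for $\vF_k$ to be a blowing-up of $\vF_{k-1}$. Next, since $X^{\reg}$ is tangent to the fibers of $\pi$ and $\pi_k=\pi_{k-1}\circ\varphi_k$, the rescaled generator is tangent to the fibers of $\pi_k$ away from $D$, hence everywhere by smoothness, so $\vF_k$ really belongs to the fibered class $\Fol^{\infty}(N_k,M_k,\Sigma_k,\pi_k)$. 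Finally, the local generators built in overlapping directional charts must differ by strictly positive smooth transition functions: off $D$ this is automatic, and across $D$ it follows because the exponent $a$ equals the intrinsic order of $\vF_{k-1}$ along the geometric divisor and is therefore independent of the chosen chart.

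The genuinely new difficulty, relative to the scalar theorem, lies in the frame analysis of the second paragraph and its compatibility in the third. Theorem~\ref{theorem-smoothing-rpss} hands us smoothness of the coefficients in the \emph{old} frame, but the passage to the new frame introduces the divisor poles, and one must verify that a single power of the radial coordinate simultaneously clears every pole \emph{and} leaves a generator that does not vanish identically along $D$ — otherwise $\vF_k$ would fail to define a line field on the exceptional divisor. Carrying this out coherently across the phase-directional charts $B_{I,j}$ and the family-directional chart $B_F$ at one and the same resolution step, while preserving tangency to the evolving fibration $\pi_k$, is the main technical point; the remainder is a faithful adaptation of the proof of Theorem~\ref{theorem-smoothing-rpss}.
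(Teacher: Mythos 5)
Your proposal is correct and follows essentially the same route as the paper: the paper likewise reuses the blow-up sequence of Theorem~\ref{theorem-smoothing-rpss}, applies Proposition~\ref{prop-initial-fiber} and Lemma~\ref{Lemma-partial-conv} componentwise, observes that the change of frame introduces a simple pole along the new exceptional divisor, and takes $r\cdot(\varphi_k^{-1})^{\star}X$ as the local generator of $\vF_k$. The only substantive difference is your normalization: the paper always multiplies by the \emph{first} power $r$ of the divisor equation (which makes the positivity and chart-compatibility of the transition functions immediate, since the definition of a piecewise $1$-foliation does not require the generator to be nonzero along $D$), whereas you divide by the maximal power, which obliges you to supply the extra chart-independence argument for the exponent $a$ that the paper's convention avoids.
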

As a consequence, $\vF_r$ is a globally smooth foliation on the manifold $N_r$.   The above Theorem will be proved in subsection \ref{subsection-proof-Smoothing}.
\subsection{Case of hyperplane discontinuity: link to ST-regularization}\label{subsection-st-reg-link}
Before addressing the proof of Theorem \ref{theorem-smoothing-regvf}, let us illustrate the procedure in the particular setting where $M = \cR^n$, with coordinates $(x_1,\ldots,x_n)$, and where the discontinuity locus $\Sigma = \{x_1 = 0\}$ is a coordinate hyperplane.  In this case, we will see that the regularization by convolution closely strongly related to the well-known {\em Sotomayor-Teixeira regularization} introduced in \cite{ST}. 

In the above setting, a vector field $X\in \mathfrak{X}^\infty(M,\Sigma)$ has can be written as
\begin{equation}\label{def-X}
 X = \tmmathbf{1}_{\{ x_1 > 0 \}} X^+ + \tmmathbf{1}_{\{ x_1 < 0 \}} X^- 
\end{equation}
 where $X^{\pm} = \sum_{k = 1}^n \vf_k^{\pm} (x) \frac{\partial}{\partial x_k}$ are smooth vector fields and, as previously, $\tmmathbf{1}_S$ denotes the characteristic function of a set $S$.  

The regularized family given by definition \ref{def-regularized-family} is a piecewise-smooth vector field $X^\reg$ in the product manifold $N = \cR^n \times (\cR_{\ge 0},0)$ with discontinuity locus given by the subspace $\Sigma=\{x_1 = \eps = 0\}$. 

Let us consider the map 
$$\varphi: \tilde N \rightarrow N$$
given by the blowing-up of $N$ with center on $\Sigma$. 
\begin{figure}[htb]
\centering
  \includegraphics[width=6.69519054178145cm,height=4.17839105339105cm]{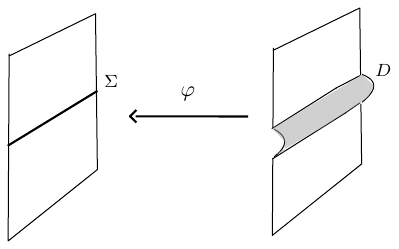}
\end{figure}
As previously discussed, $\varphi$ restricts to a diffeomorphism 
$$
\varphi|_{\tilde N\setminus D} : \tilde N \setminus D \longrightarrow N \setminus \Sigma
$$
where $D = \varphi^{-1}(\Gamma)$ is the exceptional divisor. 
As a consequence the restriction of the vector field $X^\reg$ to $\tilde N \setminus \Sigma$ can be pulled-back by $\varphi$ to a smooth vector field $\varphi^\star X^\reg$ on $\tilde N \setminus D$.

We now prove that, after multiplication by $r$, $X^\reg$ extends \emph{smoothly} to the divisor $D$, defined by $\{r=0\}$. Although this result is a particular case of Theorem \ref{theorem-smoothing-regvf}, we include a direct proof here since the explicit expression of the blown-up foliation will be needed later in subsection \ref{subsection-sewing}.
\begin{Proposition}\label{prop-smoothing-one-comp}
The vector field 
$$\mathcal{X} = r\cdot (\varphi^\star X^\reg)$$ 
defines an element of $\mathfrak{X }^{\infty}(\tilde N,\tilde M,\emptyset,\tilde \pi)$.  In particular, $\mathcal{X}$ is smooth.
\end{Proposition}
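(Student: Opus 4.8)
The plan is to work in the directional charts that cover the exceptional divisor $D=\varphi^{-1}(\Sigma)$, pull back $X^\reg$ explicitly in each, track the pole that the pull-back of $\partial/\partial x_1$ develops along $D$, and verify that the single factor $r$ cancels it. Since the center is $\Sigma=\{x_1=\eps=0\}$, of codimension two, a neighborhood of $D$ is covered by the two $\pm$ phase-directional charts $B_{(\{1\},1)}$ and the family-directional chart $B_F$ of subsections \ref{subsect-phase-blowing-up}--\ref{subsect-blowing-up-regular}. Off $D$ the map $\varphi$ is a diffeomorphism, so $\mathcal X=r\cdot\varphi^\star X^\reg$ is automatically smooth there; it therefore suffices to produce a smooth extension across $D$ in each chart. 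In every chart $r$ equals the local defining function of $D$ times a strictly positive smooth unit (for instance $r=z_1\sqrt{1+\rho^2}$ in the phase chart and $r=\rho\sqrt{1+z_1^2}$ in the family chart), so the problem reduces to multiplying by that local divisor function.

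In the family chart $x_1=\rho z_1,\ x'=z',\ \eps=\rho$, writing $X^\reg=\sum_k h_{k,\eps}\,\partial/\partial x_k$ (which is fiber-tangent, hence carries no $\partial/\partial\eps$ term), a chain-rule computation gives $\partial/\partial x_1\mapsto\rho^{-1}\partial/\partial z_1$ and $\partial/\partial x_j\mapsto\partial/\partial z_j$ for $j\ge 2$. By Lemma \ref{Lemma-partial-conv} the pulled-back coefficients $\hat h_k=h_{k,\eps}\circ B_F$ are smooth on the whole chart, so $\varphi^\star X^\reg=\rho^{-1}\hat h_1\,\partial/\partial z_1+\sum_{j\ge 2}\hat h_j\,\partial/\partial z_j$ has a single simple pole, and $\mathcal X=\hat h_1\,\partial/\partial z_1+\rho\sum_{j\ge 2}\hat h_j\,\partial/\partial z_j$ (up to the positive unit) is smooth and visibly tangent to the fibers $\{\rho=\mathrm{cte}\}$ of $\tilde\pi$.

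In the phase chart $x_1=z_1\ (z_1\ge 0),\ x'=z',\ \eps=z_1\rho$, the same computation yields $\partial/\partial x_1\mapsto\partial/\partial z_1-(\rho/z_1)\,\partial/\partial\rho$ and $\partial/\partial x_j\mapsto\partial/\partial z_j$. Here the index data is $I=\{1\}$, $i_1=1$, so $\Sigma_{I\setminus\{i_1\}}=\emptyset$; Lemma \ref{lemma-+directional} then shows the coefficients $\hat h_k=h_{k,\eps}\circ B_{(\{1\},1)}$ are smooth on a neighborhood of the initial manifold $\tilde M=\{\rho=0\}$, while the remaining points of $D$ (those with $\rho>0$) lie in the family chart where smoothness is already established, and for $\eps>0$ smoothness is automatic. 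Thus $\hat h_k$ is smooth throughout the chart, $\varphi^\star X^\reg=\hat h_1\,\partial/\partial z_1+\sum_{j\ge 2}\hat h_j\,\partial/\partial z_j-(\rho/z_1)\hat h_1\,\partial/\partial\rho$ has a single simple pole $1/z_1$ in the $\partial/\partial\rho$ component, and $\mathcal X=z_1\hat h_1\,\partial/\partial z_1+z_1\sum_{j\ge 2}\hat h_j\,\partial/\partial z_j-\rho\hat h_1\,\partial/\partial\rho$ (up to the unit) is smooth; the $-$ chart is identical. A direct check gives $\mathcal X(\eps\circ\varphi)=\mathcal X(z_1\rho)=z_1\rho\hat h_1-z_1\rho\hat h_1=0$, so $\mathcal X$ is again tangent to the fibers of $\tilde\pi$.

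Finally, the three local extensions agree on overlaps, being continuous extensions of the single field $r\cdot\varphi^\star X^\reg$ defined on $\tilde N\setminus D$, so they glue to a globally smooth vector field $\mathcal X$ on $\tilde N$. Since $\Sigma$ is itself the blow-up center, its strict transform $\tilde\Sigma$ is empty, and hence $\mathcal X\in\mathfrak{X}^\infty(\tilde N,\tilde M,\emptyset,\tilde\pi)$. I expect the one genuinely delicate point to be the smoothness of $\hat h_k$ in the phase chart near $\tilde M$, precisely where the original jump across $\{x_1=0\}$ lives: this is exactly where the compact support of the mollifier enters, through Lemma \ref{lemma-+directional}, ensuring that for small $\rho$ the convolution sees only the $+$ component. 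By contrast, the pole-clearing is a routine Jacobian computation whose only real content is that the pole is \emph{simple} — a reflection of the codimension-two center together with the fiber-tangency of $X^\reg$.
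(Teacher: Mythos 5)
Your proposal is correct and follows essentially the same route as the paper: cover a neighborhood of the exceptional divisor by the family ($\eps$-directional) chart and the two $\pm x_1$ phase-directional charts, establish smoothness of the pulled-back coefficients via Lemma \ref{Lemma-partial-conv} in the family chart and via the compact-support-of-the-mollifier argument near $\{\rho=0\}$ in the phase charts (which the paper redoes inline rather than citing Lemma \ref{lemma-+directional}, but it is the same computation with $I=\{1\}$), and then clear the simple pole by multiplying by the local equation of $D$. Your additional remarks — that $r$ differs from the local divisor equation by a positive unit, and the explicit check of tangency to the fibers of $\tilde\pi$ — are correct and only make explicit what the paper leaves implicit.
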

\begin{proof}
We firstly study the expression of the regularized vector field $\mathcal{X}$ in the initial coordinates. 
By hypothesis, the coefficients of the original vector field $X$ in (\ref{def-X}) can be written as
\begin{equation}\label{coeff-fkX}
\vf_k = \tmmathbf{1}_{\{ x_1 > 0 \}} \vf_{k }^+ + \tmmathbf{1}_{\{ x_1 < 0 \}} \vf_{k }^- 
\end{equation}
where both $\vf_k^+$ and $\vf_k^-$ are smooth functions on $\cR^n$. 
By Fubini's Theorem, the integral expression (\ref{expr-f_k}) for $f_k$ can be rewritten as
\begin{equation}\label{Fubini-f_k}
  f_k \left( x, \underline{x}, \eps  \right) = \int_{\cR^{n - 1}}
  \left( \int_{\cR}  \vf_k \left( x_1- \eps \tau, \underline{x} -
  \eps  \underline{\tau} \right) m \left( \tau, \underline{\tau} \right)
  d \tau \right) d \underline{\tau}
\end{equation}
where we write $x = (x_1,\underline{x})$ and $t = (\tau,\underline{\tau}) \in \cR \times \cR^{n-1}$. Using the above decomposition of $\vf_k$, the innermost integral can be further decomposed as 
{\small \begin{align}\label{equation-integexpr}
\int_{\cR} 
   \vf_k \left( x_1- \eps \tau, \underline{x} -
  \eps  \underline{\tau} \right) m \left( \tau, \underline{\tau} \right) d\tau
   &= \int_{\{ x_1- \eps \tau \ge 0 \}} 
        \vf_k^+ \left( x_1- \eps \tau, \underline{x} -
  \eps  \underline{\tau} \right) m \left( \tau, \underline{\tau} \right)  \\
   &\;\;+ \int_{\{ x_1- \eps \tau \le 0 \}} 
        \vf_k^- \left( x_1- \eps \tau, \underline{x} -
  \eps  \underline{\tau} \right) m \left( \tau, \underline{\tau} \right) \notag
\end{align}}
which obviously has a discontinuous limit as $\eps \rightarrow 0$. 

Let us now compute the pull-back of $X^\reg$ under the blowing-up through the coordinates of the directional charts.  Firstly, the $\eps$-directional chart is given by the coordinate change $x_1 = \eps  y$, with a new variable $y\in \cR$.  In this chart, the vector field assumes the form 
\[ 
 \frac{1}{\eps } F_1 \left( y, \underline{x}, \eps 
   \right) \frac{\partial}{\partial y} + \sum_{k \ge  2} F_k \left( y,
   \underline{x}, \eps  \right) \frac{\partial}{\partial x_k} \]
where each function $F_k = f_k \circ \varphi$ is given $F_k \left( y, \underline{x}, \eps  \right) : = f_k \left(
\eps  y, \underline{x}, \eps  \right)$. Therefore, since the divisor $D$ is expressed in this chart by the equation $\{\eps = 0\}$, we conclude that ${\mathcal{X}}$ 
is locally generated by the vector field
\begin{equation}\label{expression-calX}
  F_1 \left( y, \underline{x}, \eps 
   \right) \frac{\partial}{\partial y} + \eps \left(\sum_{k \ge  2} F_k \left( y,
   \underline{x}, \eps  \right) \frac{\partial}{\partial x_k}\right)
\end{equation}
As a consequence, it suffices to prove that each coefficient $F_k$ extends smoothly to $\{\eps = 0\}$. 
By substituting $x_1=\eps  y$, the integral (\ref{equation-integexpr}) assumes the form
\begin{align}\label{equation-integexprF}
\int_{\cR} 
   \vf_k\!\left( \eps  (y - \tau), \, \underline{x} - \eps  \underline{\tau} \right) 
   m \!\left( \tau, \underline{\tau} \right) d\tau
   &= \int_{\{ \tau \le  y \}} 
        \vf_k^+ \!\left( \eps  (y - \tau), \, \underline{x} - \eps  \underline{\tau} \right) 
        m \!\left( \tau, \underline{\tau} \right) d\tau  \\
   &\; + \int_{\{ \tau \ge  y \}} 
        \vf_k^- \!\left( \eps  (y - \tau), \, \underline{x} - \eps  \underline{\tau} \right) 
        m \!\left( \tau, \underline{\tau} \right) d\tau \notag
\end{align}
And, by substituting $x_1=\eps  y$ into the the integral (\ref{Fubini-f_k}), we conclude that $F_k$ is given by the integral
$$
F_k(y,\underline{x},\eps) = \int_{\cR^{n-1}} \; \int_{\cR} 
   \vf_k\!\left( \eps  (y - \tau), \, \underline{x} - \eps  \underline{\tau} \right) 
   m \!\left( \tau, \underline{\tau} \right) d\tau\; d\underline{\tau}
$$
which is precisely the integral studied in Lemma \ref{Lemma-partial-conv} in the particular case where $I=\{1\}$. Therefore, $F_k$ is a smooth function. 

We now consider the $\pm x_1$-directional charts, where the blow-up takes the form $x_1 = \pm z, \eps= z \rho$, with new variables $z,\rho\in \cR_{\ge 0}$. Using these coordinates, the vector field $\mathcal{X}$ (outside the divisor $D$) is given by
\[
 \mathcal{X}= \frac{1}{z} G_{1,\pm} \left( z, \underline{x}, \rho
   \right) \left(z\frac{\partial}{\partial z}-\frac{\partial}{\partial \rho}\right) + \sum_{k \ge  2} G_{k,\pm} \left( z,
   \underline{x}, \rho \right) \frac{\partial}{\partial x_k} 
\]
where $G_{k,\pm}=f_k \circ \varphi$, i.e.\ $G_{k,\pm}( z,\underline{x}, \rho) = f_k(\pm z,\underline{x},z \rho)$.  
In this chart, the divisor has equation $D=\{ z = 0\}$ and therefore ${\mathcal{X}}$ is equivalent to 
$$
G_{1,\pm} \left( z, \underline{x}, \rho
   \right) \left(z\frac{\partial}{\partial z}-\frac{\partial}{\partial \rho}\right) + z \left( \sum_{k \ge  2} G_{k,\pm} \left( z,
   \underline{x}, \rho \right) \frac{\partial}{\partial x_k}  \right)
$$
We claim that each coefficient $G_{k,\pm}$ is smooth.  To simplify the notation, we make the computations for $G_{k,+}$, the negative case being similar.

Referring again to the integral in (\ref{equation-integexpr}), the substitution $x_1 = z, \eps= z \rho$ yields
\begin{align*}
\int_{\cR} 
   \vf_k\!\left( z(1-\rho), \, \underline{x} - z \rho \underline{\tau} \right) 
   m \!\left( \tau, \underline{\tau} \right) d\tau
   &= \int_{{\{ \rho \tau \le 1 \}}} 
        \vf_k^+ \!\left( z(1-\rho \tau), \, \underline{x} - z \rho \underline{\tau} \right) 
        m \!\left( \tau, \underline{\tau} \right) d\tau  \\
   &\; + \int_{{\{ \rho\tau  \ge 1 \}}} 
        \vf_k^- \!\left( z(1-\rho \tau), \, \underline{x} - z \rho \underline{\tau} \right) 
        m \!\left( \tau, \underline{\tau} \right) d\tau
\end{align*}
We claim that this is smooth function of the variables $z,\rho,\underline{x}$ and $\underline{\tau}$. To see this, the crucial fact to observe is that for $\rho$ sufficiently small, the second integral in the right-hand side {\em vanish identically}. Indeed, since the mollifier $m$ has compact support, there exists an $\rho_0 > 0$ such that $m(\tau,\underline{\tau})$ vanishes identically for $\tau \ge \frac{1}{\rho_0}$. Therefore, for all $\rho \le \rho_0$, the above integral assumes the simple form 
$$
\int_{{\cR}} 
        \vf_k^+ \!\left( z(1-\rho \tau), \, \underline{x} - z \rho \underline{\tau} \right) 
        m \!\left( \tau, \underline{\tau} \right) d\tau
$$
which is a smooth function since $\vf_k^+$ is smooth. 
A further integration with respect to $\underline{\tau}$ shows that $G_{k,+}$ is smooth. This concludes the proof. 
\end{proof}
Note that the restriction of $\mathcal{X}$ to the zero-fiber $F_0 = \tilde{\pi}^{-1}(0)$
defines a smooth foliation (on $F_0$)  which projects into the discontinuous
foliation defined by the the original vector field $X$ (see Figure \ref{fig-zero-fiber}).
\begin{figure}[htb]
\centering
  \includegraphics[width=5.65039190607372cm,height=4.05404696313787cm]{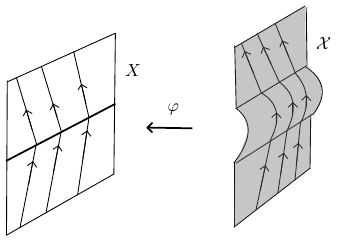}
  \caption{The zero fiber $F_0$ is the gray-shaded set.}\label{fig-zero-fiber}
\end{figure}

We conclude this subsection by studying the first order terms of the expansion of ${\mathcal{X}}$ in powers of $r$. These expressions will allow to relate the regularization by convolution defined here with the Sotomayor-Teixeira regularization. They will also be used in subsection \ref{subsection-sewing}.

In the $\eps$-directional chart, we write the first order expansion of the coefficients $F_k$ appearing in (\ref{expression-calX}) in terms of $\eps$ as
$$
F_k  = F^0_k + \eps R_k
$$
where $F^0_k = F_k|_{\{\eps = 0\}}$ and $R_k$ is a smooth function. Then, up to equivalence, we can write ${\mathcal{X}}$ as a sum $\mathcal{X}^0 + \eps \mathcal{R}$, where
$$
\mathcal{X}^0=F_1^0 + \eps \sum_{k \ge  2} F_k^0 \frac{\partial}{\partial x_k}, \quad\text{and}\quad
\mathcal{R}= R_1 \frac{\partial}{\partial x_1} + \eps \sum_{k\ge 2}{R_k \frac{\partial}{\partial x_k}}
$$
Let us show that the first term $\mathcal{X}^0$ has a simple expression in terms of the original piecewise-smooth vector field $X$.  By setting $\eps = 0$ in the integral expression (\ref{equation-integexprF}), we obtain 
\begin{align*}
 F_k^0(y,\underline{x}) =& \vf_k^+(0,\underline{x}) M_+(y) + 
 \vf_k^-(0,\underline{x}) M_-(y) 
\end{align*}
where $\vf_k^{\pm}$ are the coefficients appearing in (\ref{def-X}) and the {\em weight-functions} $M_+$ and $M_-$ are given by
$$
M_+(y)=\int_{-\infty}^y \int_{\cR^{n-1}} m(\tau,\underline{\tau}) d \underline{\tau} d\tau,\quad \text{and}\quad 
M_-(y)=\int_y^{\infty} \int_{\cR^{n-1}} m(\tau,\underline{\tau}) d \underline{\tau} d\tau
$$
In particular, by setting $\{\eps = 0\}$, we conclude that the restriction of ${\mathcal{X}}$ to the exceptional divisor $D$ is given by the {\em purely vertical} vector field
\begin{equation}\label{vX-purely-vertical}
{\mathcal{X}}|_D = \Big( \vf_1^+(0,\underline{x}) M_+(y) + 
 \vf_1^-(0,\underline{x}) M_-(y) \Big)\; \frac{\partial}{\partial y}
\end{equation}
\begin{Remark}
Intuitively, the functions $M_+(y)$ and $M_-(y)$ measure respectively the {\em mass} of the mollifier $m$ situated above and below the hyperplane 
$\{\tau = y\}$. In particular, $M_+ + M_- =1$ and the above expression shows that ${\mathcal{X}}$ is (up to a $\eps$-correction term) a {\em convex combination} of the values of the boundary vector fields $X_+$ and $X_-$ restricted to the discontinuity locus. 
\end{Remark}
\begin{Remark}
From the definition of a mollifier is follows that the function $\phi(y) = 2 M_+(y) - 1= - 2 M_-(y) + 1$ is a {\em smoothed sign function}, i.e.~there exists constants $y_- <  y_+$ such that:
\begin{enumerate}
\item $\phi = -1$ identically for $y\le y_-$
\item $\phi$ is strictly increasing on the interval $(y_-,y_+)$, and
\item $\phi = 1$ identically for $y \ge y_+$. 
\end{enumerate}
\begin{figure}[htb]
\centering
 \includegraphics[width=8.66662567230749cm,height=4.02737603305785cm]{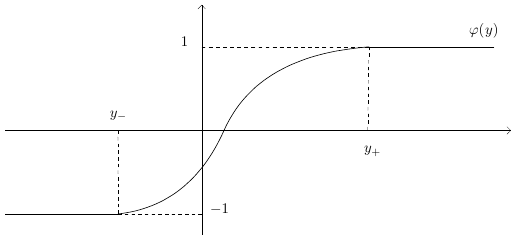}
 \end{figure}
Using such function we can write the vector field $\mathcal{X}^0$ defined above as
$$
\mathcal{X}^0 = \left(\frac{1+\phi(y)}{2}\right) \mathcal{X}^0_+ + \left(\frac{1-\phi(y)}{2}\right) \mathcal{X}^0_-
$$
where 
$$
\mathcal{X}^0_\pm =  \vf_1^\pm(0,\underline{x}) \frac{\partial}{\partial y} +  \eps \sum_{k\ge 2}
\left( \vf_k^\pm(0,\underline{x}) \frac{\partial}{\partial x_k}\right)
$$
This expression coincides with the well-known \emph{Sotomayor–Teixeira regularization} introduced in \cite{ST} when written in the blow-up chart. Consequently, the convolution regularization is asymptotically equivalent to the Sotomayor- Teixeira regularization in a neighborhood of the exceptional divisor, up to an error of order $O(\varepsilon)$. 
 \end{Remark}
A similar computation in the $\pm x_1$-directional charts shows that (up to equivalence) we can write
${\mathcal{X}}$ as a sum  $\mathcal{X}^0_\pm + z\, \mathcal{R}_\pm$, where $\mathcal{R}_\pm$ is some smooth vector field and
$$
\mathcal{X}^0_\pm = G_{1,\pm}^0
    \left(z\frac{\partial}{\partial z}-\rho\frac{\partial}{\partial \rho}\right) + z \left( \sum_{k \ge  2} G_{k,\pm}^0 \left( z,
   \underline{x}, \rho \right) \frac{\partial}{\partial x_k}  \right)
$$
has coefficients $G_{k,\pm}^k=M_+\left(\pm \frac{1}{\rho}\right) \vf_k^+(0,\underline{x})  + M_-\left(\pm \frac{1}{\rho}\right)  \vf_k^+(0,\underline{x})$, 
with the same functions $M_+$ and $M_-$ as above.
\subsection{Proof of the Smoothing theorem for vector fields}\label{subsection-proof-Smoothing}
The proof of Theorem \ref{theorem-smoothing-regvf} is a straightforward adaptation of the proof of Theorem \ref{theorem-smoothing-rpss}. 

We consider initially the analog of the local situation described in subsection \ref{subsect-local-situation-reg-I}. 
Namely, there are local coordinates $x = (x_I,x')$ and $\eps$ such that we can write $M= M_I =\{(x,\eps) \mid \eps = 0\}$ and $\Sigma=\Sigma_I = \prod_{i\in I} x_i$. 
In these coordinates, the regularized vector field can be written as 
$$
X^\reg = \sum_{i\in I} f_i^{\reg} \frac{\partial}{\partial x_i} + \sum_{k=1}^{K} g_{k}^\reg \frac{\partial}{\partial x'_k} 
$$
where we write $x' = (x'_1,\ldots,x'_l)$ and $K = n-\# I$.  Note that the coefficients $f_i^\reg,g_k^\reg$ are regularized functions lying in $C^{\infty}(N_I,M_I,
\Sigma_I)$.

Following the notation of subsection \ref{subsect-local-situation-reg-I}, let $\varphi:\tilde N \rightarrow N_I$ denote the local blowing-up with center $C_I$. 
As previously, let us describe  the local expressions of $\varphi^\star X^\reg$ in the phase directional charts.

In the coordinates of the $i^{th}$-phase directional chart described in subsection \ref{subsect-phase-blowing-up}, we can write $\varphi^\star X^\reg$ as a sum
$\frac{1}{x_{i_1}} Y_1 + Y_2$, where 
$$
Y_1 = (f_{i_1}^{\reg} \circ B) \left( 
z_{i_1} \frac{\partial}{\partial z_{i_1}} 
- \sum_{i \in I \setminus \{ i_1 \}} \frac{\partial}{\partial z_i} - \rho \frac{\partial}{\partial \rho}
\right)
+ \sum_{i \in I \setminus \{ i_1 \}} (f_i^{\reg} \circ B) \frac{\partial}{\partial z_i}
$$
and 
$$
Y_2 = \sum_{k=1}^{K} (g_{k}^\reg \circ B) \frac{\partial}{\partial z'_k} 
$$
Here, we note by $B = B_{(I,i_1)}$ the directional blowing-up map. Therefore, the multiplication by $x_{i_1}$ (which is a local equation for the exceptional divisor $D$) defines the vector field
$$
x_{i_1}\cdot ((B_{(I,i_1)})^\star\, X^\reg) = Y_1 + x_{i_1} Y_2
$$
 which, according to Lemma \ref{lemma-+directional}, is a piecewise smooth vector field with coefficients lying in $C^{\infty} (W, \tilde{M},\Sigma_{I \setminus \{ i_1 \}})$. 
 
Very similar computations apply to the family of directional blowing-up charts described in subsection \ref{subsect-blowing-up-regular}. Using Lemma \ref{Lemma-partial-conv}, one verifies that the resulting vector field
$$
\rho\cdot  (B_F^\star\, X^\reg)
$$
is smooth, where $\{\rho = 0\}$ is the local equation of the exceptional divisor $D$. 

These computations show that the inductive blowing-up procedure described in subsection \ref{subsect-smoothingprocedure} operates naturally in the components of the regularized vector field $X^\reg$, thereby eliminating successively the strata of the discontinuity locus $\Sigma$.

Moreover, for each $k = 1, \ldots, r$, it shows that the piecewise 1-foliation 
$\vF_{k}$ in (\ref{severalvFk}) can be obtained from its predecessor $\vF_{k-1}$ as follows: if $(U, X)$ is a local generator of $\vF_{k-1}$, then
$$\bigl(\varphi_k^{-1}(U),\, r \cdot (\varphi_k^{-1})^\star X\bigr)$$
is a local generator of $\vF_{k}$, where $(\varphi_k^{-1})^\star X$ denotes the pullback of $X$ under $\varphi_k$, and where $\{r = 0\}$ is the local equation of the exceptional divisor $D_{k} = \varphi_k^{-1}(C_{k-1})$. This concludes the proof of the Theorem.
\section{Further examples and applications}\label{s7}
We now discuss three examples illustrating the regularization procedure developed in this paper. The first subsection examines the regularization in a neighborhood of sewing-type periodic orbits. Using the blowing-up construction, we show that the Poincaré first return map defined near such orbits embeds smoothly into a family of Poincaré first return maps for the regularized vector field.

The remaining two subsections have a more exploratory character. Their aim is to highlight interesting dynamical phenomena that may arise from the regularization of piecewise smooth vector fields when the discontinuity locus $\Sigma$ is non-smooth.

Our goal is not to provide a systematic account of all possible dynamical behaviors. Nevertheless, the phenomena described here are robust under small perturbations and therefore occur in an open subset of the space of all piecewise smooth vector fields with the prescribed discontinuity locus.

\subsection{Regularization in the vicinity of sewing periodic orbits} \label{subsection-sewing}
Let $(M,\Sigma)$ be a piecewise smooth space and let $X \in \mathfrak{X }^{\infty} (M, \Sigma)$ be a piecewise-smooth vector field. A {\em sewing-type periodic
orbit} for $X$ is a continuous, oriented, simple closed curve $\gamma \subset
M$ that can be expressed as a concatenation of (oriented) smooth segments
\begin{equation}
  \gamma = \gamma_1 \cup \cdots \cup \gamma_k \label{decomp}
\end{equation}
such that, for each $i \in \{ 1, \ldots, k \}$, the conditions following hold:
\begin{enumeratenumeric}
  \item The interior $\ring{\gamma_i}$ of $\gamma_i$ is contained in a
  connected component $W_i \subset M \setminus \Sigma$ and is a regular
  orbit of the smooth vector field $X_i = X |_{W_i} \nobracket$. The orientation of $\ring{\gamma_i}$ coincides with the orientation of the flow of $X_i$.   
  \item The closure of $\gamma_i$ intersects $\Sigma$ transversely and $X_i
  |_{W_i} \nobracket$ is non-zero at these intersection points. 
\end{enumeratenumeric}
Without loss of generality, we can further suppose that the decomposition
(\ref{decomp}) is {\tmem{minimal}}, namely if $k = 1$ then $\gamma = \gamma_1$
is a periodic orbit of $X  |_{M \setminus \Sigma} \nobracket$. If $k \geqslant
2$ then each $\gamma_i$ has both its initial and endpoints
\[ \{ p_i, q_i \} = \gamma_i \setminus \ring{\gamma_i} \]
contained in some smooth component of $\Sigma$, with the identification $q_i =
p_{i + 1}$ (with $i + 1 \in \{ 1, \ldots, k \}$ taken modulo $k$).

The condition (2) of the above definition implies that $\Sigma$ is a
transverse section for $X_i |_{W_i} \nobracket$ in the vicinity of both $p_i$ and
$q_i$. Therefore, we can define a (germ of)  {\em $\gamma_i$-transition map}
\[ P_i : (\Sigma, p_i) \rightarrow (\Sigma, p_{i + 1}) \]
simply by following the flow of $X_i |_{W_i} \nobracket$ in the vicinity of
$\gamma_i$.
\begin{figure}[htb]
\centering
 \includegraphics[width=6.0255722812541cm,height=4.84122064803883cm]{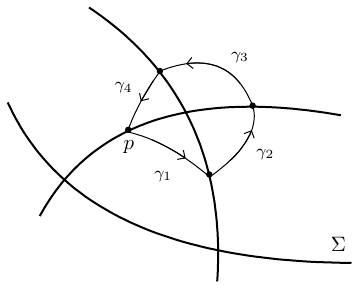}
 \end{figure}
The {\tmem{sewing Poincar{\'e} (first return) map}} associated to $\gamma$ at $p = p_1$
is the {\em smooth} map $P : (\Sigma, p) \rightarrow (\Sigma, p)$ defined by the
composition
\begin{equation}\label{poincareP}
  P = P_k \circ \cdots \circ P_1 
\end{equation}
of these successive $\gamma_i$-transition maps. 

Let us now suppose that $M$ is an open subset of $\cR^n$, and let 
$$X^\reg = \reg_m(X)$$ be the regularization of $X$ associated to an arbitrary mollifier $m$, as defined in subsection \ref{subsect-reg-vf}. 
The main result of this subsection is the following:
\begin{Proposition}\label{prop-regularized-poincare}
 The sewing Poincaré map
  $P$ extends smoothly to a map in the product space
  $$P^\reg:(\Sigma, p) \times (\cR_{\geqslant 0}, 0) \rightarrow (\Sigma, p) \times (\cR_{\geqslant 0}, 0)$$
 Moreover, if $\pi(p,\eps)=\eps$ denotes the linear projection then, for each $\eps > 0$ the map $P_\eps = P^\reg|_{\pi^{-1}(\eps)}$ is the Poincar{\'e} first return map of the
smoothed vector field $X_{\eps} = m_\eps \ast X$.  
\end{Proposition}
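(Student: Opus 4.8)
The plan is to prove both assertions by localizing the first return map along $\gamma$ and showing that every elementary piece of it extends smoothly to $\eps = 0$. For $\eps > 0$ the field $X_\eps = m_\eps \ast X$ is smooth, and the transversality condition in the definition of a sewing-type orbit, together with the convergence $X_\eps \to X$ on compacta of $M\setminus\Sigma$ (Theorem \ref{rem-conv}), guarantees that $X_\eps$ is transverse to $\Sigma$ near each crossing point $p_i$ for $\eps$ small; hence the first return map $P_\eps$ to $(\Sigma,p)$ is well defined, smooth, and fiber-preserving for $\pi$. The whole content of the proposition is therefore the \emph{smooth dependence on $\eps$ down to $\eps = 0$}, with limit $P_0 = P$. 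First I would fix transverse sections $T_i$ in the interior of each component $W_i$ cutting $\gamma_i$, and factor $P_\eps$ as an alternating composition of \emph{regular segments} (transition maps along $\gamma_i$ inside $W_i$, bounded away from $\Sigma$) and \emph{crossing maps} (short transitions across $\Sigma$ near each $p_i$). Since composition preserves joint smoothness in the base point and in $\eps$, it suffices to treat the two types of pieces separately.

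The regular segments are immediate. On a compact neighborhood of $\gamma_i \cap W_i$ bounded away from $\Sigma$, the regularized object $X^{\reg}$ is a genuinely smooth vector field on $N = M\times(\cR_{\ge 0},0)$ (Definition \ref{def-regularized-family}), and $X_\eps$ converges in the $C^\infty$-topology to $X_i = X|_{W_i}$ as $\eps\to 0$. The transition map between two transverse sections therefore depends smoothly on $(\text{base point},\eps)$ jointly, including the value $\eps = 0$, by smooth dependence of flows on parameters; its restriction to $\eps = 0$ is exactly the $X_i$-transition map appearing in the definition of $P$.

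The crossing maps are the heart of the argument, and the only place where the regularization is genuinely at work. Near a crossing point I would use adapted coordinates with $\Sigma = \{x_1 = 0\}$ and the orbit passing transversally from $\{x_1<0\}$ to $\{x_1>0\}$, so that the normal component $f_{1,\eps} = m_\eps \ast f_1$ is strictly positive on a fixed neighborhood of $p_i$ for all sufficiently small $\eps>0$ (and its inner rescaling stays positive up to $\eps=0$). This positivity licenses reparametrizing the flow by $x_1$: the crossing map from $\{x_1=-\delta\}$ to $\{x_1=+\delta\}$ becomes the $x_1$-evolution of $d\underline{x}/dx_1 = g_{k,\eps}/f_{1,\eps}$. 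I would then split this evolution into the two \emph{outer} parts $|x_1|\ge \eps r$, where $X_\eps$ reduces to the pure convolutions $m_\eps \ast X^{\pm}$ and the reparametrized field is smooth in $(\underline{x},\eps)$ up to $\eps = 0$, and the \emph{inner} part $|x_1|\le \eps r$, where $r$ is the support radius of $m$. On the inner part I rescale $x_1 = \eps y$, turning the equation into $d\underline{x}/dy = \eps\,G_k(y,\underline{x},\eps)/F_1(y,\underline{x},\eps)$, where $F_1 = f_{1,\eps}(\eps y,\cdot)$ and $G_k = g_{k,\eps}(\eps y,\cdot)$ are precisely the blown-up coefficients shown to be smooth in $(y,\underline{x},\eps)$, with $F_1>0$, in the proof of Proposition \ref{prop-smoothing-one-comp} (via Lemma \ref{Lemma-partial-conv}). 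Hence the inner passage is $\mathrm{id} + O(\eps)$ and depends smoothly on $\eps$; composing the three pieces, the crossing map extends smoothly to $\eps = 0$, its limit being the concatenation of the $X^-$-flow up to $\Sigma$ followed by the $X^+$-flow off $\Sigma$ — that is, the sewing transition of $X$, consistent with the purely vertical restriction \eqref{vX-purely-vertical} on the divisor.

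Assembling the pieces, $P^{\reg}$ is a composition of maps that are smooth and fiber-preserving on $(\Sigma,p)\times(\cR_{\ge 0},0)$, hence smooth; by construction $P^{\reg}|_{\eps>0} = P_\eps$ is the first return map of $X_\eps$, while $P^{\reg}|_{\eps=0} = P$. The main obstacle is exactly the crossing map: the limiting field is discontinuous across $\Sigma$ and the regularization genuinely mixes $X^+$ and $X^-$ in a layer of width $\sim\eps$, so smoothness in $\eps$ down to $0$ is not automatic. What rescues it is the interplay of two facts — the sewing transversality, which makes $f_{1,\eps}$ nonvanishing and thereby permits the $x_1$-reparametrization that absorbs the vanishing transit time, and the blow-up smoothing of Proposition \ref{prop-smoothing-one-comp}, which controls the inner layer. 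A secondary point to handle with care is the matching of the $\eps$-dependent inner/outer sections $\{x_1 = \pm\eps r\}$, and the verification, via uniform $C^0$-closeness of orbits together with compactness of $\gamma$, that for all small $\eps$ the first-return trajectory of $X_\eps$ really does thread the chosen sequence of charts in order.
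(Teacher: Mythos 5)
Your proposal is correct and follows essentially the same route as the paper's proof: both decompose $P$ into the successive transition maps, use the sewing positivity $f_1^{\pm}>0$ near each crossing point, and reduce the smoothness of the crossing across $\Sigma$ to the blow-up smoothing of Proposition \ref{prop-smoothing-one-comp}. Your inner rescaling $x_1=\eps y$ and outer regions $|x_1|\ge \eps r$ are precisely the $\eps$-directional and $\pm x_1$-directional charts the paper uses, and your observation that the inner passage is $\mathrm{id}+O(\eps)$ is the paper's observation that the restriction of $\mathcal{X}$ to the exceptional divisor is purely vertical with positive coefficient, so the divisor transitions are identities.
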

\begin{proof}
  The idea is to study separately each $\gamma_i$-transition map $P_i :
  (\Sigma, p_i) \rightarrow (\Sigma, p_{i + 1})$.
  
  Choose some $p = p_i$ and fix local coordinates $(x_1,\ldots,x_n) \in \cR^{d
  - 1} \times \cR$ as in subsection \ref{subsection-st-reg-link}. By the
  sewing hypothesis, we can further assume that such coordinates are chosen in
  such a way that
  \begin{equation}\label{sewing-hypothesis-f1}
    f_1^+ > 0, \quad f_1^- > 0 
  \end{equation}
  where $f_k^+,f_k^-$ are the coefficients of $X$ given by
  (\ref{coeff-fkX}). We extend the variety $\Sigma$ to the total space $M
  \times (\cR_{\geqslant 0}, 0)$ by defining
  $\Omega = \Sigma \times \cR_{\geqslant 0}$.
  Therefore, in the local coordinates $\left( x, \eps \right)$
  from the total space, such extended section is given by $\Omega = \{ x_1 = 0
  \}$. We claim that the transition map $P_i$, with domain the local
  section $(\Sigma, p_i)$, extends smoothly to the transition map for
  the regularized vector field $X^\reg$, defined on the extended local section $(\Omega,
  p_i)$.
  
  Following the construction made in subsection \ref{subsection-st-reg-link},
  consider the blowing-up
  \[ \varphi : \mathcal{N} \rightarrow M \times \cR_{\geqslant 0} \]
  which defines a smoothing of $X^\reg$ up to the zero-fiber $F_0 = \pi^{-1}(0)$. We
  study the Poincaré map using the coordinates of the directional charts described in that subsection.
  
  Consider two sections $\Sigma^+, \Sigma^-  \in D$ lying in the
  exceptional divisor, defined in the coordinates of the respective $(+
  x_1)$-directional chart and $(- x_1)$-directional charts by
  \[ \Sigma^{\pm} = \left\{ z = \rho = 0 \right\} \]
  (see figure below). We denote by $\tilde{p}^{\pm}$ the point lying in
  $\Sigma^{\pm} \cap \varphi^{- 1} (\{ p \})$, which is given in these
  coordinates by $\left( z,\rho,\underline{x} \right) = 0$.
  Note that the strict transform of the extended section $\Omega$ lies in the
  domain of the $\eps$-chart and is given by
  \[ \bar{\Omega} = \{ y = 0 \} \]
  and denote by $\bar{p}$ the point in $\bar{\Omega} \cap \varphi^{- 1} (\{ p
  \})$, with coordinates $\left( y, \eps,\underline{x} \right) =
  0$.
  
  Using the original local coordinates at $p$ and the blowing-up charts, we can fix
  a common local parametrization $\underline{x}=(x_2,\ldots,x_n) \in \cR^{n - 1}$ for all the three
  sections $\Sigma, \Sigma^{\pm}$ and also for the section $\bar{\Omega}$.
  
  \begin{figure}[htb]
\centering
 \includegraphics[width=6.35624098124098cm,height=4.18684409025318cm]{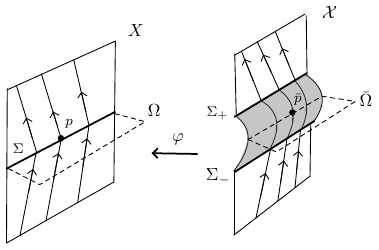}
  \end{figure}
 
 We now we apply this construction to each point $p_1, \ldots, p_k$ in
  $\gamma \cap \Sigma$ and obtain a collection of sections $\Sigma^{\pm}_i,
  \bar{\Omega}_i$ (equipped with local parameterizations) such that the
  following properties holds:
  \begin{enumeratealpha}
    \item Each transition map $P_i : (\Sigma, p_i) \rightarrow (\Sigma, p_{i +
    1})$ defined by $X$ by coincides with the corresponding transition map
    \[ \tilde{P}_i : (\Sigma^+_i, \tilde{p}_i^+) \rightarrow (\Sigma^-_{i +
       1}, \tilde{p}_{i + 1}^-) \]
    defined by the restriction of the foliation $\mathcal{X}$ to $F_0
    \setminus \mathcal{D}$, where $\Sigma, \Sigma_i^{\pm}$ are parametrized as
    above.
    
    \item The restriction of the foliation  $\cal X$  to the divisor $D$ defines two transition maps
    \[ (\Sigma^-_i, \tilde{p}_i^-) \rightarrow (\bar{\Omega} \cap \mathcal{D},
       \bar{p}_i) \rightarrow (\Sigma^+_i, \tilde{p}_i^+) \]
    which are simply the identity map. 
  \end{enumeratealpha}
  The property (a) is a simple consequence of the fact that the restriction of
  $\mathcal{X}$ to $F_0 \setminus \mathcal{D}$ coincides with $X \setminus
  \Sigma$. 
  
  Property (b) is a consequence of the computations made in
  subsection \ref{subsection-st-reg-link}. Indeed, we have seen that the
  restriction of $\mathcal{X}$ to $\mathcal{D}$ is a {\tmem{purely vertical}}
  foliation, given by the expression (\ref{vX-purely-vertical}). 
  Furthermore, the sewing hypothesis
  (\ref{sewing-hypothesis-f1}) implies that the coefficient $\vf_1^+(0,\underline{x}) M_+(y) + 
 \vf_1^-(0,\underline{x}) M_-(y)$ appearing in that expression is strictly positive.  Therefore, upon
  division by such coefficient,  we see that $\mathcal{X} |_D$ is simply generated by the constant vector field
  $\frac{\partial}{\partial y}$. Hence, since the $x$-coordinate is preserved, the
  transition maps given in (b) are equal to the identity.
  
  As a conclusion, the composition of all such maps defines a first return
  map for $\mathcal{X}$,
  \[ \bar{P} : (\bar{\Omega} \cap \mathcal{D}, \bar{p} ) \rightarrow
     (\bar{\Omega} \cap \mathcal{D}, \bar{p}) \]
  near the section at the initial point $\bar{p} = \bar{p}_1$. Furthermore, such map
  coincides with the original Poincar{\'e} map $P : (\Sigma, p) \rightarrow
  (\Sigma, p)$.
  
  Finally, we observe that, again by the sewing condition (\ref{sewing-hypothesis-f1}),
  the foliation $\cal X$ is locally transverse to each local section 
  ($\bar{\Omega}_i$, $\overline{p_i}$) . Therefore, the map $\bar{P}$ defined
  above extends smoothly to a Poincar{\'e} first return map $\mathcal{P} :
  (\bar{\Omega}, \bar{p} ) \rightarrow (\bar{\Omega}, \bar{p})$ associated  $\cal X$.
\end{proof}

We will say that $\gamma$ is a {\tmem{$(s : u)$-hyperbolic periodic orbit of
sewing-type}} if there are positive integers $s,u$ with $s + u = n$ such that the differential $dP
(p)$ of the Poincaré map $P$ given in (\ref{poincareP}) has $s$ eigenvalues in $\{ z \in \C: | z | < 1 \}$ and $u$ eigenvalues in $\{ z \in \C
: | z | > 1 \}$, counted with multiplicity.

As a simple consequence of the Implicit Function Theorem, we obtain the
following structural stability result:

\begin{Corollary}
  Suppose that $\gamma$ is a $(s : u)$-hyperbolic periodic orbit of
  sewing-type. Then, there exists a neighborhood $U \subset M$ of $\gamma$ and
  a neighborhood of the origin $E \subset \cR_{\geqslant
  0}$, such that for each $\eps \in E\setminus\{0\}$, $X_\eps = m_\eps \ast X$ has a unique $(s : u)$-hyperbolic
  periodic orbit $\gamma_{\eps}$ contained in $U$. 
\end{Corollary}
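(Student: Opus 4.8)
The plan is to reduce the persistence of $\gamma$ to a fixed-point problem for the smooth family of Poincaré maps produced by Proposition \ref{prop-regularized-poincare}, and then to run the Implicit Function Theorem together with the continuity of the spectrum. First I would record the content of that proposition: the sewing Poincaré map extends to a \emph{smooth} map $P^\reg$ on $(\Sigma,p)\times(\cR_{\geqslant 0},0)$ with $P^\reg|_{\eps=0}=P$ and $P^\reg|_{\pi^{-1}(\eps)}=P_\eps$ the first return map of $X_\eps=m_\eps\ast X$ for $\eps>0$. Fixing the local parametrization $\underline{x}=(x_2,\ldots,x_n)$ of the section used in that proof, a periodic orbit of $X_\eps$ crossing the section corresponds exactly to a fixed point of $P_\eps$ near $p$. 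Accordingly, I would introduce the displacement map
\[
\Delta(\underline{x},\eps)=P^\reg(\underline{x},\eps)-\underline{x},
\]
which is smooth near $(0,0)$ and satisfies $\Delta(0,0)=0$, since $\underline{x}=0$ (i.e.~$p$) is a fixed point of $P_0=P$.

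The key computation is the partial differential $D_{\underline{x}}\Delta(0,0)=dP(p)-\mathrm{Id}$. The hyperbolicity hypothesis on $\gamma$ guarantees that no eigenvalue of $dP(p)$ lies on the unit circle, so in particular $1$ is not an eigenvalue and $dP(p)-\mathrm{Id}$ is invertible. The Implicit Function Theorem then yields a unique germ of smooth curve $\eps\mapsto\underline{x}(\eps)$ with $\underline{x}(0)=0$ and $\Delta(\underline{x}(\eps),\eps)=0$; for $\eps>0$ this fixed point of $P_\eps$ is the trace on the section of a periodic orbit $\gamma_\eps$ of $X_\eps$, and $\gamma_\eps\to\gamma$ in the Hausdorff sense as $\eps\to 0$.

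To control the hyperbolicity type I would observe that the spectrum of $dP_\eps(\underline{x}(\eps))=D_{\underline{x}}P^\reg(\underline{x}(\eps),\eps)$ depends smoothly on $\eps$, since both $P^\reg$ and the branch $\underline{x}(\eps)$ are smooth. At $\eps=0$ this spectrum splits into $s$ eigenvalues inside and $u$ outside the unit circle, with none on it; because such a splitting away from the unit circle is an open condition on matrices, it persists for all $\eps$ in a neighborhood $E$ of the origin. Hence each $\gamma_\eps$, $\eps\in E\setminus\{0\}$, is again $(s:u)$-hyperbolic.

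The step requiring the most care is the \emph{uniqueness of $\gamma_\eps$ inside a fixed neighborhood $U$ of $\gamma$}, since the Implicit Function Theorem only guarantees uniqueness among points of the section close to $p$. I would choose $U$ to be a thin tubular neighborhood of $\gamma$, and $E$ correspondingly small, so that every periodic orbit of $X_\eps$ contained in $U$ is forced to cross the extended section $\bar\Omega$ transversally near $\bar p$. This uses the transversality of the smoothed foliation $\mathcal{X}$ to the local sections $\bar\Omega_i$ established at the end of the proof of Proposition \ref{prop-regularized-poincare}: shrinking $U$ confines any recurrence to a narrow flow-box around each segment $\gamma_i$, so that any closed orbit in $U$ produces a single transverse return near $p$, hence a fixed point of $P_\eps$ inside the Implicit Function Theorem neighborhood, where it is unique. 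Translating this local uniqueness into genuine uniqueness within the fixed neighborhood $U$ is the main obstacle, and it is precisely here that the transversality furnished by the sewing condition \eqref{sewing-hypothesis-f1} is essential.
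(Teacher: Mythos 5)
Your argument is correct and is precisely the one the paper intends: it presents this Corollary with no written proof beyond the phrase ``as a simple consequence of the Implicit Function Theorem,'' applied to the smooth extended Poincar\'e map $P^{\reg}$ of Proposition \ref{prop-regularized-poincare}. Your write-up fills in the standard details (displacement map, invertibility of $dP(p)-\mathrm{Id}$ from hyperbolicity, openness of the spectral splitting, and the flow-box argument upgrading local uniqueness on the section to uniqueness in a tubular neighborhood $U$), all of which are consistent with the paper's setup.
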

Moreover, such family of periodic orbits $\left\{ \gamma_{\eps}
\right\}$ converges to $\gamma$ in the Hausdorff distance as $\eps \rightarrow 0$.

As a second application, we can give an alternative proof of the divergence formula for the derivative of $P^\reg$ in the planar case 
(see also Proposition 13 in \cite{SM}). 

Suppose that $M$ is an open subset of $\cR^2$. For each $i=1,\ldots,k$, we denote by $\theta_i^{\mathrm{in}}$ and $\theta_i^{\mathrm{out}}$ the (oriented) angles of intersection of the orbit $\gamma_i$ with the $\Sigma$, at points $p_i$ and $p_{i+1}$ respectively (see Figure below).  

\begin{figure}[htb]
\centering
 \includegraphics[width=5.8cm]{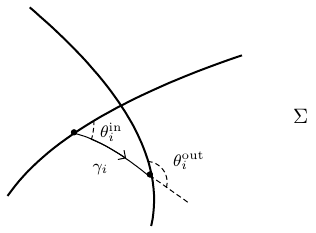}
  \end{figure}

Let us fix an arbitrary 
parametrization $x \in (\cR,0) \rightarrow (\Sigma,p)$ 
of the section $\Sigma$ at $p$, and consider the regularized Poincaré map $(x,\eps) \mapsto P^\reg(x,\eps)$ defined in Proposition 
\ref{prop-regularized-poincare}. 
\begin{Corollary} 
The derivative $\frac{d P^\reg}{d x}$ at the origin is given by
$$
\frac{d P^\reg}{d x}(0) = \prod_{i=1}^{k}{\frac{\| X_i(p_i)\|\, \cdot\ \sin(\theta_i^{\mathrm{in}}\,) }{\| X_i(p_{i+1})\|\cdot \sin(\theta_i^{\mathrm{out}})}\; \exp \int_{t\in I_i}{\mathrm{div}(X_i)(\gamma_i(t)) d t} }
$$
where, for each $i=1,\ldots,k$, $I_i \subset \cR$ denotes the travel time of $X_i$ along the orbit $\gamma_i$ from $p_i$ to $p_{i+1}$. 
\end{Corollary}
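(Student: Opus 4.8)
The plan is to reduce the statement to the classical planar divergence formula for the transition map of a smooth flow between two transverse sections, and then to assemble the contributions of the segments $\gamma_1,\ldots,\gamma_k$ by the chain rule. The starting point is Proposition \ref{prop-regularized-poincare}: the regularized map $P^\reg$ restricts on the zero-fiber $\{\eps=0\}$ to the original sewing Poincaré map $P$ (the first-return map $\bar P$ built there on the divisor coincides with $P$). Since $P$ is a self-map of $(\Sigma,p)$ fixing the point $p$, its derivative at that fixed point is the multiplier of the fixed point and is \emph{independent of the chosen parametrization} $x$. Hence $\frac{dP^\reg}{dx}(0)=P'(0)$, and it suffices to compute $P'(0)$.

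First I would fix arc-length parametrizations of $\Sigma$ in the vicinity of each crossing point $p_i$. Writing $P=P_k\circ\cdots\circ P_1$ as in (\ref{poincareP}) and applying the chain rule, one obtains $P'(0)=\prod_{i=1}^k P_i'$, where each factor $P_i'$ is the derivative of the transition map $P_i:(\Sigma,p_i)\to(\Sigma,p_{i+1})$ taken with respect to the arc-length parametrizations at its source $p_i$ and at its target $p_{i+1}$. Because the same arc-length parametrization is used at each intermediate point — as the target of one factor and the source of the next — the product is intrinsic and equals $P'(0)$ for the arbitrary parametrization $x$ at $p=p_1$.

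The heart of the argument is the single-segment identity
\[
P_i'(0)=\frac{\|X_i(p_i)\|\,\sin\theta_i^{\mathrm{in}}}{\|X_i(p_{i+1})\|\,\sin\theta_i^{\mathrm{out}}}\,\exp\int_{I_i}\mathrm{div}(X_i)(\gamma_i(t))\,dt,
\]
which I would establish by a Liouville-type area argument. Let $\phi_t$ be the flow of the smooth field $X_i$ on $W_i$ and let $T=|I_i|$ be the travel time from $p_i$ to $p_{i+1}$. Consider the infinitesimal parallelogram at $p_i$ spanned by the flow displacement $\delta X_i(p_i)$ and a unit tangent vector to $\Sigma$; its area is $\delta\,\|X_i(p_i)\|\sin\theta_i^{\mathrm{in}}$. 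Its image under $\phi_T$ is spanned by $\delta X_i(p_{i+1})$, since $D\phi_T\,X_i(p_i)=X_i(p_{i+1})$, and by the image of the section vector; by Liouville's formula $\det D\phi_T(p_i)=\exp\int_{I_i}\mathrm{div}(X_i)\,dt$, so the image area equals $\exp\!\big(\int_{I_i}\mathrm{div}(X_i)\big)$ times the original one. Decomposing the image of the section vector into its $X_i(p_{i+1})$-component and its $\Sigma$-tangent component (the time reparametrization needed to land back on $\Sigma$ absorbs exactly the former), the tangent component has magnitude $P_i'(0)$; equating the two expressions for the image area, the second being $\delta\,P_i'(0)\,\|X_i(p_{i+1})\|\sin\theta_i^{\mathrm{out}}$, yields the displayed identity. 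Positivity of the two sines is guaranteed by the transversality condition (2) in the definition of a sewing-type orbit, together with the orientation convention fixing the signs of $\theta_i^{\mathrm{in}}$ and $\theta_i^{\mathrm{out}}$.

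Multiplying the $k$ single-segment identities and invoking the chain rule gives the stated product. The main obstacle I anticipate is not analytic but rather the careful bookkeeping of orientations: making the oriented angles $\theta_i^{\mathrm{in}},\theta_i^{\mathrm{out}}$ precise and verifying that in the area decomposition the $X_i(p_{i+1})$-direction contributes nothing, so that exactly the factor $\sin\theta_i^{\mathrm{out}}$ survives. Once the area argument is set up with unit section vectors, the remaining computation reduces to Liouville's formula and is routine.
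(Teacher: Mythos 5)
Your proposal follows essentially the same route as the paper: decompose $P=P_k\circ\cdots\circ P_1$, apply the chain rule, and evaluate each $P_i'$ by the classical variational (divergence) formula for planar transition maps, which the paper simply cites from Andronov--Leontovich--Gordon--Maier. Your additional care about parametrization-independence of the multiplier at the fixed point and your Liouville-type area derivation of the single-segment identity are correct and merely fill in details the paper leaves to the reference.
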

\begin{proof}
The derivative of each transition map
$P_i : (\Sigma_i, p_i) \to (\Sigma_{i+1}, p_{i+1})$
can be computed by means of the classical variational formula for smooth planar vector fields (see, for instance, \cite{ALGM}). The expression in the enunciate follows directly from (\ref{poincareP}) together with the chain rule.
\end{proof}

\subsection{Regularized dynamics on the planar cross}\label{subsect-planar-cross}
Consider the piecewise smooth space $(M,\Sigma)=(\cR^2,\{xy=0\})$. We will consider a vector field $X \in \mathfrak{X}^\infty(M,\Sigma)$ which is {\em piecewise constant}. Namely, we assume that $X$ has the form
\[ X = \sum_{s, t \in \{ - 1, 1 \}} \tmmathbf{1}_{\{ s x > 0, t y > 0 \}} X_{st} \]
where
\[ X_{s t} = a_{s t}  \frac{\partial}{\partial x} + b_{s t}
   \frac{\partial}{\partial y} \]
are constant vector fields.

Let $X^{\mathrm{reg}} = \reg_m(X)$ denote the regularization of $X$ with respect to a fixed mollifier $m$.  
In what follows, we will suppose that such mollifier has the form $m(x,y) = \mathbf{m}(x)\cdot\mathbf{m}(y)$, where $\mathbf{m}$ is a mollifier in $\cR$ such that
\begin{itemize}
    \item $\mathbf{m}$ is an even function and its support is contained in the interval $[-1,1]$.
    \item $\mathbf{m} = \frac{1}{2}$ on the interval  $\left[-\frac{1}{2},\frac{1}{2} \right]$.
    \end{itemize}
These hypothesis guarantee that the function $\mathbf{M}(x)=\displaystyle \int_{\{u < x\}} \mathbf{m}(u)\, d u$ has the form
$$
\mathbf{M}(x) = \frac{1}{2} (1+x),\qquad \forall x\in \left[ \frac{1}{2},\frac{1}{2} \right]
$$
(see Figure below). As a consequence, by Fubini's Theorem, we have 
\begin{equation}\label{expression-M}
M(x,y) := \int_{\{u < x,v < y\}} m(u,v)\, du\, dv = \frac{1}{4}(1+x)(1+y)
\end{equation}
when restricted to the square $Q = \left[\frac{1}{2},\frac{1}{2} \right]^2$. We will say that $Q$ is the {\em core region} of the regularization.
\begin{figure}[htb] 
\centering
  \includegraphics[width=7.69519054178145cm]{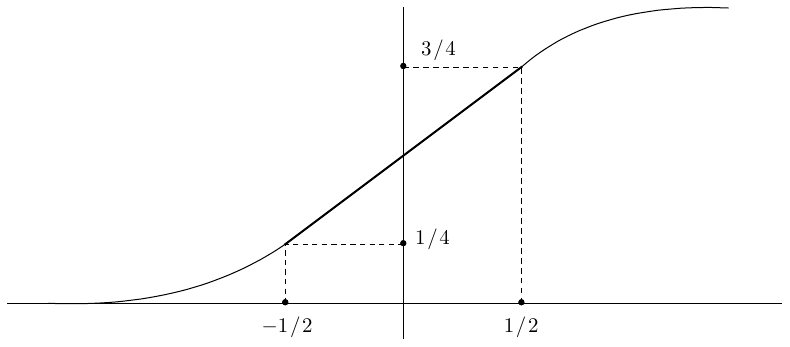}
\end{figure}

Now, we consider the smoothing of $X^\reg$, as described in subsection \ref{subsect-smoothing-vf}. In this setting, such smoothing is obtained by initially blowing-up the origin and then blowing-up the (strict transform of) the $x$-axis and $y$-axis.   

Here, we will only study the $\eps$-directional chart of the first blowing-up. In other words, we consider the directional blowing up of the origin on the product space $\cR^2 \times \cR_{\ge 0}$ given by the map
$$
x = \bar{\eps}\, \bar x,\quad y = \bar{\eps}\, \bar y,\quad \eps = \bar{\eps}
$$ 
Under this transformation, we obtain the following vector field 
\[ \mathcal{X} = \frac{1}{\varepsilon} \sum_{s, t \in \{ - 1, 1 \}} M_{s t}
   (x, y) X_{s t} \]
where $M_{s t}$ is defined by
\[ M_{s t} (x, y) = \int_{\cR^2} \tmmathbf{1}_{\{ s (x - u) > 0, t (y -
   u) > 0 \}} m (u, v) d u d v \]
where we drop the bars to simplify the notation. From (\ref{expression-M}), we obtain that
$$
  M_{s t} (x, y) = \frac{1}{4} (1 + s x) (1 + t y), 
 $$
for all $(x,y)\in Q$. Therefore, restricted to the {\tmem{core region}} \ $Q = [- 1/2, 1/2]^2$ we obtain
the quadratic vector field,
{\small
\[ {\mathcal{X} = \frac{1}{4 \varepsilon} [(1 + x) (1 + y) X_{+ +} + (1
   + x) (1 - y) X_{+ -} + (1 - x) (1 + y) X_{- +} + (1 - x) (1 - y) X_{- -}]}
\]
}
After multiplication by $\eps$, we write the associated differential equations as 
\begin{eqnarray*}
  x' & = & a_{+ +} (1 + x) (1 + y) + a_{+ -} (1 + x) (1 - y) + a_{- +} (1 - x)
  (1 + y) + a_{- -} (1 - x) (1 - y)\\
  y' & = & b_{+ +} (1 + x) (1 + y) + b_{+ -} (1 + x) (1 - y) + b_{- +} (1 - x)
  (1 + y) + b_{- -} (1 - x) (1 - y)
\end{eqnarray*}
By regrouping the terms in the quadratic polynomials, we can rewrite this family in the form
\begin{equation}
  \left\{ \begin{array}{l}
    x' = A (x - a) (y - b) - B = f (x, y)\\
    \\
    y' = C (x - c) (y - d) - D = g (x, y)
  \end{array} \right. \label{sistema}
\end{equation}
with parameters $A, B, C, D, a, b, c, d$. In what follows, we will suppose
that $A C \neq 0$. Under the action of the group of affine coordinate changes
\[ X = \alpha x + \beta, \quad Y = \gamma y + \delta \quad t = \nu T \quad \]
The system assumes the form
\[ \begin{array}{l}
     X' = \frac{A}{\nu \gamma} (X - \alpha (a - \beta)) (Y - \gamma (b -
     \delta)) - \frac{B}{\nu}\\
     \\
     Y' = \frac{C}{\nu \alpha} (X - \alpha (c - \beta)) (Y - \gamma (d -
     \delta)) - \frac{D}{\nu}
   \end{array} \]
By a convenient choice of such coordinate change, we can suppose that the
system has the form
\[ \left\{ \begin{array}{l}
     x' = \left( x + \frac{1}{2} \right)  \left( y + \frac{1}{2} \right) - B\\
     \\
     y' = C \left( x - \frac{1}{2} \right) \left( y - \frac{1}{2} \right) - D
   \end{array} \right. \]
\begin{Remark}
  We tacitly assume that these translations should happen inside the core
  region $Q$ and that the two quadratic polynomials have distinct critical
  points. This corresponds to imposing additional semi-algebraic restrictions
  on the original coefficients $a_{s t}, b_{s t}$. \ 
\end{Remark}

From now on we will assume that the initial parameters are chosen such that $C, B, D$ are positive. The isoclines are
illustrated in the following figure. 

\begin{figure}
\centering
\includegraphics[width=4.39871933621934cm,height=4.60756099960645cm]{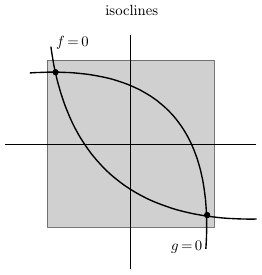}
\end{figure}

The trace and the determinant determine two straight lines inside the core
region, given by
\[ \tmop{Tr} = Cx - \frac{1}{2}  \hspace{0.17em} C + y + \frac{1}{2}, \quad
   \quad \tmop{Det} = C (x - y) \]
and this shows that the upper-left and lower-right singularities appearing in
the above figure are respectively a saddle and a focus point.

The intersection point
\[ x = y = \frac{1}{2}  \frac{C - 1}{ \hspace{0.17em} C + 1} \]
is a singularity for the original system if we choose the parameter values
\[ B = \frac{C^2}{(C + 1)^2}, \qquad D = \frac{C}{(C^2 + 1)} \]
We claim that this singular point is always a {\tmem{cuspidal singularity}}.

\begin{figure}
\centering
\includegraphics[width=4.3925455857274cm,height=4.3103354978355cm]{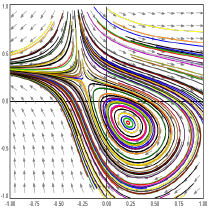}
\end{figure}

Indeed, if we put the system in the Bogdanov-Takens normal form $y
\frac{\partial}{\partial x} + (a x^2 + b x y + R) \frac{\partial}{\partial y}$
we have
\begin{itemize}
  \item $a b < 0$ for $C > 1$ (generic $\tmop{BT}_-$ point of codimension 2)
  
  \item $a b > 0$ for $C < 1$ (generic $\tmop{BT}_+$ point of codimension 2)
  
  \item $a < 0, b = 0$ for $C = 1$
\end{itemize}
Moreover, in a fixed plane $C = \tmop{constant}$ (for $C \neq 1$), the
parameters $(B, D)$ unfold into a {\tmem{versal Bogdanov-Takens family of
codimension 2}}.

For $C = 1$ the problem is much more subtle. For $B = D$ the subfamily is
symmetric with respect to the line $y = - x$ and has a generalized
Darboux-type first integral
\[ H (x, y) = \left( xy + \frac{1}{2} (y - x) - \hspace{0.17em} B -
   \frac{1}{4} \right) e^{y - x} \]
\begin{figure}[htb]
\centering
\includegraphics[width=3.49351469237833cm,height=3.48931686999869cm]{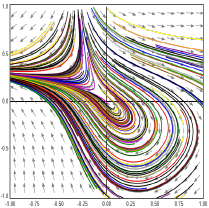}\qquad\includegraphics[width=3.49351469237833cm,height=3.48931686999869cm]{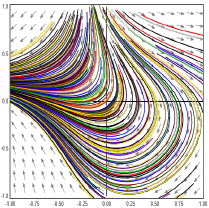}
\end{figure}

A rigorous analysis of the bifurcations near this stratum would require the study of pseudo-Abelian integrals, in the sense of Bobieński–Mardešić \cite{BobienskiMardesic2008}.

\subsection{Regularized cross in dimension three: Bykov cycles}

Consider a collection $\tmmathbf{C} = \left\{ C_{\us} \right\}$ of eight constant 3-dimensional vector fields
\[ C_{\us} = a_{\us} \frac{\partial}{\partial x} + b_{\us}
   \frac{\partial}{\partial y} + c_{\us} \frac{\partial}{\partial z} \]
indexed by all sign choices $\us = (s_1,s_2,s_3) \in \{-1,+1\}^3$. 

The {\tmem{piecewise constant spatial cross}} associated to such collection
$\tmmathbf{C}$ is the vector field $\tmmathbf{X}_{\tmmathbf{C}}$ in $\mathfrak{X}^\infty(\cR^3,\{xyz=0\})$ defined by
\[ \tmmathbf{X}_{\tmmathbf{C}} = \sum_{\us = (s_1, s_2, s_3) \in \{ - 1, 1
   \}^3} \tmmathbf{1 }_{\{ s_1 x > 0, s_2 y > 0, s_3 z > 0 \}} C_{\us} \]
Consider the regularization associated to the mollifier 
$$m(x,y,z) = \mathbf{m}(x)\mathbf{m}(y)\mathbf{m}(z)$$
where $\mathbf{m}$ is the same function given in the previous subsection. 
Then, on the $\eps$-chart of the first blowing-up, the resulting vector field has the form $\mathcal{X}_{\tmmathbf{C}} =
\frac{1}{\varepsilon} \tmmathbf{Y}_{\tmmathbf{C}}$, where the restriction of
$\tmmathbf{Y}_C$ to the {\tmem{core region}} $Q = [- 1/2, 1/2]^3$ is given by

\[ \tmmathbf{Y}_C = \sum_{\us \in \{ - 1, 1 \}^3} P_{\us} C_{\us} \]
with $\left\{ P_{\us} \right\}_{\us}$ being the collection of cubic
polynomials
\[ P_{\us} (x, y, z) = (1 + s_1 x) (1 + s_2 y) (1 + s_3 z) \]
We claim that there exist a collection $\tmmathbf{C}$ of constant vector field
such that $\tmmathbf{Y}_{\tmmathbf{C}}$ has a spatial cusp at the origin $0
\in \cR^3$. Moreover, such spatial cusp unfolds versally as
$\tmmathbf{C}$ is slightly perturbed.

We recall (see \cite{BIRR}, section
5.6) that a singularity of a 3-dimensional vector field $\tmmathbf{X}$ is
called a {\tmem{spatial cusp}} if there exists local coordinates $(X, Y, Z)$
such that it can be written as
\[ Y \frac{\partial}{\partial X} + Z \frac{\partial}{\partial Y} + f (X, Y,
   Z) \frac{\partial}{\partial Z} \]
where $f$ is a function such that $j^1 f (0) = 0$ and
\[ \frac{\partial^2}{\partial X ^2} f (0) \neq 0 \]
The explicit example is given as follows. We consider the collection
$\tmmathbf{C} = \left\{ C_{\us} \right\}$ given by
\[ C_{(s_1, s_2, s_3)} = (s_1 - s_3) \frac{\partial}{\partial x} + (s_1 - s_3
   - s_1 s_2) \frac{\partial}{\partial y} + (s_2 - s_3)
   \frac{\partial}{\partial z} \]
or, equivalently, in vector form,
\[ C_{(s_1, s_2, s_3)} = \left(\begin{array}{c}
     s_1 - s_3\\
     s_1 - s_3 - s_1 s_2\\
     s_2 - s_3
   \end{array}\right) \]
We compute that the associated vector field $\tmmathbf{Y}_C$ has the
expression
\[ (x - z) \frac{\partial}{\partial x} + (x - z - x y)
   \frac{\partial}{\partial y} + (y - z) \frac{\partial}{\partial z} \]
Therefore, under the coordinate change $(X, Y, Z) = (x, x - z, x - y)$, we
obtain
\[ \tmmathbf{Y}_C = Y \frac{\partial}{\partial X} + Z \frac{\partial}{\partial Y} + X (X
   - Z) \frac{\partial}{\partial Z} + O (3) \]
which shows that the origin is a spatial cusp.

Furthermore, if we consider the three-parameter unfolding $\tmmathbf{C}^{a,
b, c} = \left\{ C_{\us}^{a, b, c} \right\}_{\us}$, with $(a, b, c) \in
\cR^3$, and $C_{\us}^{a, b, c}$ given by
\[ C^{a, b, c}_{(s_1, s_2, s_3)} = \left(\begin{array}{c}
     s_1 - s_3\\
     s_1 - s_3 - s_1 s_2\\
     s_2 - s_3
   \end{array}\right) - a \left(\begin{array}{c}
     0\\
     1\\
     0
   \end{array}\right) - b \left(\begin{array}{c}
     0\\
     s_1 - s_3\\
     0
   \end{array}\right) - c \left(\begin{array}{c}
     0\\
     s_1 - s_2\\
     0
   \end{array}\right) \]
Then the same computations as above give the three-parameter family
\[ Y \frac{\partial}{\partial X} + Z \frac{\partial}{\partial Y} + [a + b Y +
   c Z + X (X - Z)] \frac{\partial}{\partial Z} + O (3) \]
which shows that the spatial cusp unfolds versally inside the family of
regularizations of the piecewise constant cross. \

We note that the versal unfolding of the spatial cusp contains a Bikov cycle,
which is a heteroclinic connection between two foci-saddles as Figured
below (see e.g.~\cite{BIRR}, section 3).

\begin{figure}[htb]
\centering
 \includegraphics[width=7.44546930342385cm,height=4.25889577594123cm]{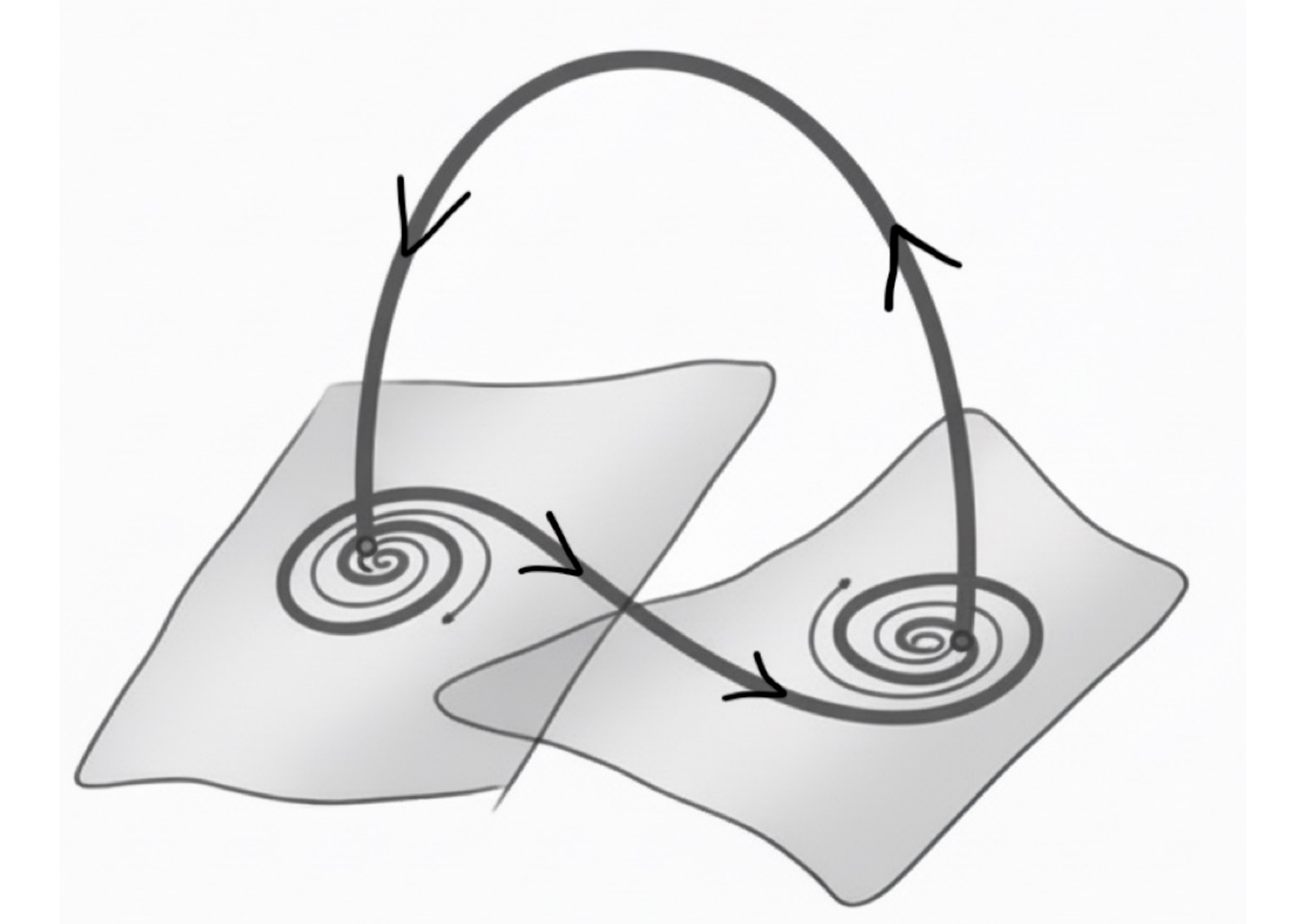}
\end{figure}
This illustrates the fact that chaotic phenomena can appear in the
regularization of the piecewise constant spatial cross.

\section*{Acknowledgement}
All authors are partially supported by ANR-23-CE40-0028, 
FAPESP grant 2023/02959-5. 
The  first and the third authors are partially supported by FAPESP grant 2024/15612-6.
The third author is partially supported by CNPq grant 302154/2022-1.

\end{document}